\newtheorem{theorem}{Theorem}[section]
\newtheorem{lemma}[theorem]{Lemma}
\newtheorem{corollary}[theorem]{Corollary}
\theoremstyle{definition}
\newtheorem{definition}[theorem]{Definition}
\newtheorem{example}[theorem]{Example}
\newtheorem{remark}[theorem]{Remark}
\numberwithin{equation}{section}
\newcommand{\A}{\mathrm{Re\,}}
\newcommand{\B}{\mathrm{Im\,}}
\newcommand{\veps}{\varepsilon}
\newcommand{\CC}{\mathbb C}
\newcommand{\RR}{\mathbb R}
\newcommand{\DD}{\mathbb D}
\newcommand{\TT}{\mathbb T}
\newcommand{\cphiz}{C_{\varphi_0}}
\newcommand{\cphiu}{C_{\varphi_1}}
\newcommand{\puq}{\varphi_{1,\mathcal Q}}
\newcommand{\pzq}{\varphi_{0,\mathcal Q}}
\newcommand{\ttinf}{\mathbb T^\infty}
\newcommand{\tripleint}{\int_{\ttinf}\!\int_{\RR}\!\int_0^1}
\newcommand{\bpsi}{\mathcal B\psi}
\newcommand{\cha}{\mathrm{char\,}}
\begin{document}
\setcounter{page}{1}

\title[Topological structure of Composition Operators]
{Topological structure of the space of composition operators on the Hardy space of Dirichlet series}
\date{\today}

\author[F. Bayart, M. Wang and X. Yao]{Fr\'ed\'eric Bayart, Maofa Wang and Xingxing Yao}

\address{Laboratoire de Math\'ematiques Blaise Pascal UMR 6620 CNRS, Universit\'e Clermont Auvergne, Campus universitaire des C\'ezeaux, 3 place Vasarely, 63178 Aubi\`ere Cedex, France.}
\email{frederic.bayart@uca.fr}

\address{School of Mathematics and Statistics, Wuhan University, Wuhan 430072, China.}\email{mfwang.math@whu.edu.cn}

\address{School of Mathematics and Physics, Wuhan Institute of Technology, Wuhan 430205, China.}\email{xxyao.math@wit.edu.cn}


\subjclass[2010]{Primary 47B33, Secondary 30B50, 46E15.}

\keywords{Composition operator, Dirichlet series, topological structure, compactness, linear combination}

\begin{abstract}
\noindent The aim of this paper is to study when two composition operators on the Hilbert space of Dirichlet series with square summable coefficients belong to the same component or when their difference is compact. As a corollary we show that if a linear combination of composition operators with polynomial symbols of degree at most 2 is compact, then each composition operator is compact.
\end{abstract}
\maketitle

\section{Introduction}

Let $H$ be a Hilbert space of analytic functions on a domain $\mathcal U$ of the complex plane
$\mathbb{C}$ and let $\varphi$ be an analytic selfmap of $\mathcal U$.
The composition operator $C_{\varphi}$ is the linear operator defined by
\begin{equation*}
  C_{\varphi}f=f\circ\varphi,~~~~ f\in H.
\end{equation*}
A central problem in the investigation of composition operators is to relate function theoretic
properties of the symbol $\varphi$ to operator theoretic properties of $C_{\varphi}$, see the books \cite{cm,sh,z}.

In recent decades, the study of the topological structure, for the operator norm topology, of the set $\mathcal C(H)$ of composition
operators acting on $H$ attracted the attention of many mathematicians.
The earliest work is Berkson's \cite{ber} isolation theorem in the setting of
Hardy-Hilbert space $H^{2}(\mathbb{\mathbb{D}})$ on the unit disk $\mathbb{D}$ in $\mathbb{C}$,
and then this result was generalized by MacCluer \cite{ma}.
The understanding of the topological structure of the set of composition operators was linked to the problem of determining
if the difference of two composition operators is compact by the Shapiro and Sundberg conjecture raised in \cite{ss}:
{$\cphiz, \cphiu$} belong to the same connected component of $\mathcal C(H^2(\mathbb D))$
if and only if $\cphiz-\cphiu$ is compact. Bourdon \cite{bourdon}
and independently Moorhouse and Toews \cite{MT} disproved this conjecture but they also showed clearly why these problems are closely related.

Beyond compact differences, the compactness of linear combinations of composition operators has also attracted  the attention of many mathematicians. As a sample of results it is known
that composition operators induced by linear fractional maps of the unit ball $\mathbb{B}_n$ in $\mathbb{C}^n$ behave quite rigidly in the following sense:
the compactness of linear combinations  $\sum_{j=1}^{N}\lambda_{j}C_{\varphi_{j}}$, for finitely many distinct linear fractional maps $\varphi_{j}$ and
nonzero complex numbers $\lambda_{j}$,
implies that each composition operator $C_{\varphi_{j}}$ is compact
on the Hardy space $H^2(\mathbb{B}_n)$ (see \cite{ckw15}).
It should be noticed that the study of the compactness of linear combinations
of composition operators is a more difficult problem than the study of the compactness of linear differences, since high order cancelations may occur, and that this often requires new ideas.
{For example, see \cite{ckw17,ckw19,km} and references therein.}

Our aim, in the present work, is to make a thorough study of these problems on the Hardy-Hilbert space $\mathcal H$  of Dirichlet series.
Hedenmalm-Lindqvist-Seip \cite{hls} introduced and started the study of the Hilbert space of Dirichlet series with square summable coefficients:
\begin{equation*}
\mathcal{H}=\bigg\{f(s)=\sum_{n=1}^{\infty}a_{n}n^{-s}:\|f\|=\bigg(\sum_{n=1}^{\infty}|a_{n}|^{2}\bigg)^{1/2}<\infty\bigg\}.
\end{equation*}
By the Cauchy-Schwarz inequality, the functions in $\mathcal{H}$ are all
analytic on the half-plane $\mathbb{C}_{1/2}$ (where, for $\theta\in\mathbb{R}$, $\mathbb{C}_{\theta}=\{s\in\mathbb{C}: \A s>\theta\}$ and $\mathbb C_+=\mathbb C_0$).
A characteristic feature of the space $\mathcal{H}$ is its reproducing kernel function being essentially the Riemann zeta function:
$K_{w}(s)=\zeta(\overline{w}+s)$ for $w,s\in \mathbb{C}_{1/2}$, where $\zeta(z)=\sum_{n=1}^{\infty}n^{-z}$.
The main results in \cite{gh,qs} show that
an analytic map $\varphi:\mathbb{C}_{1/2}\rightarrow\mathbb{C}_{1/2}$ induces a bounded composition operator $C_{\varphi}$ on $\mathcal{H}$ if and only if it is a member of the Gordon-Hedenmalm class $\mathcal{G}$ defined as follows. \medskip

\noindent \textbf{Definition.}
The \textbf{Gordon-Hedenmalm class} $\mathcal{G}$ consists of the maps $\varphi:\mathbb{C}_{1/2}\rightarrow\mathbb{C}_{1/2}$ of the form
$$\varphi(s)=c_{0}s+\psi(s),$$
where $c_{0}$ is a non-negative integer (called the \textbf{characteristic} of $\varphi$, i.e., $\cha(\varphi)=c_{0}$),
and $\psi(s)=\sum_{n=1}^{\infty}c_{n}n^{-s}$ converges uniformly in $\mathbb{C}_{\varepsilon}$ for every $\varepsilon>0$ and has the following properties:
\begin{itemize}
  \item[(a)] If $c_{0}=0,$ then $\psi(\mathbb{C}_{0})\subseteq\mathbb{C}_{1/2}.$
  \item[(b)] If $c_{0}\geq1,$ then either $\psi\equiv c_1$ with $\A c_1\geq 0$ or $\psi(\mathbb{C}_{0})\subseteq\mathbb{C}_{0}.$
\end{itemize}
\medskip

From then on, several authors have studied the properties of composition operators acting on $\mathcal{H}$
or on similar spaces of Dirichlet series, see for instance \cite{bai,baib,b1,b2,bb,bp,qs}. 
In particular, the study of the compactness of composition operators on $\mathcal H$ is a difficult problem. Very recently, when the symbol has zero characteristic, Brevig and Perfekt \cite{bp20} gave
a characterization of compactness by introducing a mean-counting function which has to satisfy
a decay condition. Nevertheless, a similar characterization is unknown for symbols with positive characteristic, and even for simple symbols with zero characteristic, subtle phenomena occur.

Let us introduce some terminology.
A symbol $\varphi\in\mathcal G$ is said to have \textbf{unrestricted range} if
\begin{equation*}
  \inf_{s\in \mathbb{C}_{0}}\A \varphi(s)=\begin{cases}1/2  & \text{if}\ c_{0}=0, \cr 0 &\text{if}\ c_{0}\geq1.\end{cases}
\end{equation*}
Correspondingly, if $\varphi(\mathbb{C}_{0})$ is strictly contained in any smaller half-plane, we say that $\varphi$ has \textbf{restricted range}.
It is well-known that the composition operator $C_{\varphi}$ is compact on $\mathcal{H}$  when $\varphi$ has restricted range \cite[Theorem 21]{b1}.
We call $\varphi$ a \textbf{Dirichlet polynomial symbol}, if
\begin{equation*}\label{dps}
  \varphi(s)=c_{0}s+c_{1}+\sum_{k=2}^{n}c_{k}k^{-s}.
\end{equation*}
In this work, we will be interested in polynomial symbols satisfying some conditions on their degree. Recall the definition of the arithmetic function $\Omega(n):=\alpha_1+\cdots+\alpha_d$ provided $n=p_1^{\alpha_1}\cdots p_d^{\alpha_d}$ where $\mathcal P=(p_i)_{i\geq 1}$ denotes the set of prime numbers. If
$\psi(s)=\sum_{k=1}^n c_n k^{-s}$ is a Dirichlet polynomial, its {\bf degree} is the infimum of the integers $N$ such that $c_ n=0$ provided $\Omega(n)>N$.

We will also be interested in composition operators $C_{\varphi}$ generated by \emph{linear symbols}
\begin{equation*}
\varphi(s)=c_{0}s+c_{1}+\sum_{k=1}^{d}c_{q_{k}}q_{k}^{-s},
\end{equation*}
where $\{q_{k}\}_{k=1}^{d}$ is a sequence of finitely multiplicatively independent positive integers,
and $q_{k}\neq1$, $c_{q_{k}}\neq0$ for each $k=1,\cdots,d$. It should be observed that a linear symbol is not necessary
a polynomial symbol of degree less than or equal to $1$.
Following \cite{b2} and \cite{fi},
it is known that the composition operator $C_\varphi$ induced by the linear symbol $\varphi$ with $c_{0}=0$ is compact
if and only if $\varphi$ has restricted range or $d\geq2$;
when $c_{0}\geq1$, $C_{\varphi}$ is compact if and only if $\varphi$ has restricted range.
For a Dirichlet polynomial symbol $\varphi$ with $c_{0}\geq1$,
Bayart-Brevig \cite{bb} showed that $C_{\varphi}$ is compact if and only if $\varphi$ has restricted range.

Let us come back to our initial problems of the topological structure of $\mathcal C (\mathcal H)$
and of the compactness of {linear combinations of composition operators} on $\mathcal H$.
The first result in this direction was obtained by the authors in \cite{bwy}:
if $\varphi_1$ and $\varphi_2$ are linear symbols and $\lambda_1,\ \lambda_2$ are nonzero complex numbers,
then $\lambda_1\cphiu+\lambda_2 C_{\varphi_2}$ is compact if and only if both $\cphiu$ and $C_{\varphi_2}$ are compact.
We extend this result in two directions, allowing any finite linear symbols,
and allowing polynomial symbols of degree at most 2 (see Theorems \ref{thm:lcpolynomial}, \ref{thm:lclinear} and \ref{thm:lclinearzero}).

To prove this, we will develop a machinery, inspired by \cite{MT},
allowing to get lower bounds for the essential norm of a linear combination of composition operators.
Many problems arise in the setting of $\mathcal H$: the lack of reproducing kernels defined on $\mathbb C_+$,
the infinite polydisc $\DD^\infty$ hidden behind $\CC_+$,
the characteristic of the symbol which forces to adopt different proofs following its value.
For instance, we will show that a basic result of Shapiro and Sundberg (the set of compact composition operators on $H^2(\DD)$ is connected) does not extend to $\mathcal C(\mathcal H)$,
but that the set of compact composition operators with prescribed characteristic is connected.

Most of these results are necessary conditions for the difference of two composition operators to be compact (or for two operators to be in the same connected component). In Section \ref{sec:sufficient} we turn to sufficient conditions. We succeed (see Theorem \ref{thm:samecomponent} for instance)
to give sufficient conditions that are easy to testify to prove that $\cphiz-\cphiu$ is compact, or that
$\cphiz$ and $\cphiu$ belong to the same connected component.
These results involve sufficient conditions of independent interest to prove that some weighted composition operators are bounded or compact. Again the cases of zero and nonzero characteristics
require quite different approaches, one based on Carleson measures, one based on Nevanlinna counting functions. Finally we mention that in some cases, the necessary and sufficient conditions coincide, giving a characterization of the compactness of $\cphiz-\cphiu$.

\medskip

Throughout this paper we use the notation $X\lesssim Y$  or $Y\gtrsim X$ for non-negative functions $X$ and
$Y$ to mean that there exists $C>0$ such that $X\le CY$, where $C$ does not depend on the associated variables.
Similarly, we use the notation $X\approx Y$ if both $X\lesssim Y$ and $Y \lesssim X$ hold.
We will sometimes write $X\lesssim_a Y$ if we want to point out that the involved constant depend on $a$.
Given two positive functions $f(x)$ and $g(x)$ defined on $(a,+\infty)$,
we write $f(x)\sim_{x\to a}g(x)$ (respectively, $f(x)\sim_{x\to +\infty}g(x)$),
provided $\lim_{x\to a}\frac{f(x)}{g(x)}=1$ (respectively, $\lim_{x\to +\infty}\frac{f(x)}{g(x)}=1$).

\medskip
\section{Preliminaries}

This section first introduces {some background material} associated to Bohr lifts,
vertical limit functions,
reproducing and partial reproducing kernels.

\subsection{Bohr lifts}
At some places, it will be useful to work on the polydisc $\mathbb D^d$ instead of $\mathbb C_0$.
Let $f(s)=\sum_{n=1}^{\infty}a_n n^{-s}$ be a Dirichlet series
and let $\mathcal Q=\{q_1,\dots,q_d\}$ be a sequence of multiplicatively independent integers such that
for each $n$ with $a_n\neq 0$,
we have $n=q^\alpha:=q_1^{\alpha_1}\cdots q_d^{\alpha_d}$ with $\alpha\in\mathbb N_0^d$.
We will say that $f$ has $d$ multiplicative generators when $d<+\infty$, and that $\mathcal Q$ is compatible with $f$.
The associated Bohr lift $\mathcal B_{\mathcal Q}f(z)$ is the holomorphic function defined on $\mathbb{D}^d$ by
$$\mathcal B_\mathcal Q f(z)=\sum_{n=1}^{\infty} a_n z_1^{\alpha_1}\cdots z_d^{\alpha_d}.$$
By Kronecker's theorem (see for example \cite{qq}),
the map $\mathbb R\to\mathbb T^{d}$, $t\mapsto (q_1^{it},\dots,q_d^{it})$ has dense range. In particular,
if $f$ maps $\mathbb C_0$ into $\mathbb C_0$, then $\mathcal B_\mathcal Q f$ maps $\mathbb{D}^{d}$ into $\mathbb C_0$,
and $\overline{f(i\mathbb R)}=\overline{\mathcal B_\mathcal Q f(\mathbb T^d)}$.

For any Dirichlet series $f$ having finitely multiplicative generators, for all $d$ sufficiently large,
the sequence of the first $d$ prime numbers $\{p_1,\dots,p_d\}$ is always compatible with $f$. This Bohr lift will be simply denoted by $\mathcal B$. It induces a unitary map from $\mathcal H$ onto the Hardy space $H^2(\mathbb{D}^\infty)$ over the infinite polydisc $\mathbb{D}^\infty$.
It should be observed that the degree of a Dirichlet polynomial $f$ is exactly the degree of its Bohr lift $\mathcal Bf$.

\subsection{Vertical limit functions}
Following \cite{hls}, let $\Xi$ be the dual group of $\mathbb{Q}_{+}$,
where $\mathbb{Q}_{+}$ denotes the multiplicative discrete group of strictly positive rational numbers.
Consequently, $\Xi$ is the set of all functions $\chi:\mathbb{Q}_{+}\rightarrow \mathbb{C}$, such that
\begin{itemize}
  \item[(a)] $\chi(mn)=\chi(m)\chi(n)$ for all $m,n$ in $\mathbb{Q}_{+}$.
  \item[(b)] $|\chi(n)|=1$.
\end{itemize}
$\Xi$ can be identified with $\mathbb{T}^{\infty}$, the Cartesian product of countably many copies of the unit circle.
Indeed, given any point $z=(z_{1},z_{2},\ldots)\in \mathbb{T}^{\infty}$,
we define the value of $\chi$ at the primes through
\begin{equation*}
  \chi(2)=z_{1},\chi(3)=z_{2},\ldots,\chi(p_{m})=z_{m},\ldots
\end{equation*}
and extend the definition multiplicatively.
This then yields a character, and clearly all characters can be obtained in this procedure.
Also the unique normalized Haar measure on $\Xi$ can be identified with the ordinary product measure $m$ on $\mathbb{T}^{\infty}$.

For $f(s)=\sum_{n=1}^{\infty} a_{n}n^{-s}\in\mathcal{H}$ and $\chi\in \mathbb{T}^{\infty}$,
the vertical limit function $f_{\chi}$ is defined by
\begin{equation*}
  f_{\chi}(s):=\sum_{n=1}^{\infty} a_{n}\chi(n)n^{-s}.
\end{equation*}
Note that the vertical translate function $T_{\tau}f(s):=f(s+i\tau)$, $\tau\in \mathbb{R}$,
corresponds to $f_\chi$ with $\chi(n)=n^{-i\tau}$. The name vertical limit function is justified by \cite[Lemma 2.4]{hls},
which asserts that the functions $f_{\chi}$ are precisely those obtained from the Dirichlet series $f$ by taking a limit of vertical translations,
\begin{equation*}
  f_{\chi}(s)=\lim_{k\rightarrow\infty}T_{\tau_{k}}f(s).
\end{equation*}
Here the convergence is uniform on compact subsets of the half-plane where $f$ converges uniformly.
The vertical limit function $f_{\chi}$ sometimes has better properties than the original function $f$.
As explained in \cite[Section 4.2]{hls}, if $f$ is in $\mathcal{H}$,
the Dirichlet series $f_{\chi}$ converges in $\mathbb{C}_{0}$ for almost every $\chi\in \mathbb{T}^{\infty}$,
and the non-tangential boundary value
\begin{equation*}
  f^{\ast}_\chi(it):=\lim_{\sigma\rightarrow0^{+}}f_{\chi}(\sigma+it)
\end{equation*}
exists for almost every $\chi\in \mathbb{T}^{\infty}$ and almost all $t\in\mathbb R$.
By \cite[Theorem 4.1]{hls}, we can compute the norm of $f$ in terms of the function $f_{\chi}$:
if $\mu$ is a finite Borel measure on $\mathbb{R}$, then
\begin{equation}\label{eq:normH0}
  \|f\|^{2}\mu(\mathbb{R})=\int_{\mathbb{T}^{\infty}}\int_{\mathbb{R}}|f_{\chi}(it)|^{2}d\mu(t)dm(\chi).
\end{equation}

For any symbol $\varphi(s)=c_{0}s+\psi(s)$,
define $\varphi_{\chi}$ as $\varphi_{\chi}(s)=c_{0}s+\psi_{\chi}(s)$ for $\chi\in \mathbb{T}^{\infty}$,
a vertical limit of the functions $\varphi_{\tau}(s)=c_{0}s+\psi(s+i\tau)$.
As is clarified in \cite{gh}, for any such symbol $\varphi:\mathbb{C}_{1/2}\rightarrow\mathbb{C}_{1/2}$,
and $f\in\mathcal{H}$, $\chi\in \mathbb{T}^{\infty}$, the following relation holds:
\begin{equation*}
  (f\circ\varphi)_{\chi}(s)=f_{\chi^{c_{0}}} \circ \varphi_{\chi}(s),\ \ s\in \mathbb{C}_{1/2}.
\end{equation*}
Especially, for any Dirichlet series $\varphi\in\mathcal{G}$ and $f\in\mathcal{H}$,
$C_{\varphi}f$ converges uniformly in $\mathbb{C}_{\varepsilon}$ for every $\varepsilon>0$,
and $(C_{\varphi}f)_{\chi}(s)=C_{\varphi_{\chi}}f_{\chi^{c_0}}(s)$.
This implies that $\|C_{\varphi_{\chi}}f\|=\|C_{\varphi}f\|$, and thus $\varphi_{\chi}$ belongs to $\mathcal{G}$ for every $\chi\in \mathbb{T}^{\infty}$.

\subsection{Reproducing and partial reproducing kernels}\label{sec:rk}

As recalled in the introduction, the reproducing kernel at $w\in\mathbb C_{1/2}$ in $\mathcal H$ is defined by $K_w(s)=\zeta(\overline w+s)$. Reproducing kernels are often useful in the study of composition operators (especially in the study of their compactness) since $C_\varphi^*(K_w)=K_{\varphi(w)}$. Moreover, we know that
$$\|K_w\|^2 =\zeta(2 \A w)\sim_{\A w\to 1/2}\frac{1}{2\A w -1}$$
and that if $\{w_k\}$ is any sequence of $\mathbb C_{1/2}$ such that $\A w_k\to 1/2$,
then $K_{w_k}/\| K_{w_k}\|$ tends weakly to $0$.
%
%

It turns out that in the specific case of composition operators acting on $\mathcal H$,
the reproducing kernels $K_w$ have a serious drawback:
they are limited to $\mathbb C_{1/2}$ whereas the symbol $\varphi$ may be extended to $\mathbb C_0$.
This motivates the introduction of partial reproducing kernels (refer to \cite{b2} for more details).
For $w\in \mathbb{C}_0$, and $d\geq1$, the partial reproducing kernel of order $d$ at $w$ is defined by
\begin{equation*}
  K_{d,w}(s)=\prod_{j=1}^{d}\left(\sum_{n\geq1}p_{j}^{-n(\overline{w}+s)}\right)=
  \prod_{j=1}^{d}\frac{1}{1-p_{j}^{-(\overline{w}+s)}}
  =\sum_{n\geq1,\ p^{+}(n)\leq p_{d}}n^{-\overline{w}}n^{-s}
\end{equation*}
where $p^+(n)$ denotes the biggest prime factor of $n$.
The function $K_{d,w}$ reproduces partially $\mathcal{H}$:
if $f(s)=\sum_{n=1}^{\infty}a_{n}n^{-s}\in \mathcal{H}$, then
\begin{equation*}
  \langle f, K_{d,w}\rangle=\sum_{n\geq1,\ p^{+}(n)\leq p_{d}}a_{n}n^{-w}.
\end{equation*}
By Euler's identity, we have
\begin{equation*}
  \|K_{d,w}\|=\prod_{j=1}^{d}\left(\frac{1}{1-p_{j}^{-2\A w}} \right)^{1/2}=:\zeta_{d}(2\A w)^{1/2}.
\end{equation*}
Moreover, it is clear that $k_{d,w_k}:=K_{d,w_k}/\|K_{d,w_k}\|$ converges weakly to zero for any sequence $\{w_k\}\subseteq\mathbb C_0$ satisfying  $\A w_k\rightarrow0$ and that
$$\|K_{d,w}\|^2\sim_{\A w\to 0} \frac{A_{d}}{(\A w)^{d}}$$
for some positive constant $A_d$ depending only on $d$.

The interest of using partial reproducing kernels in the study of composition operators comes from \cite[Proposition 5]{b2}: if $\varphi(s)=c_{0}s+\sum_{n=1}^{\infty}c_{n}n^{-s}$ belongs to $\mathcal{G}$ with $c_{n}=0$ for $p^{+}(n)>p_{d}$,
then $C^{\ast}_{\varphi}K_{d,w}=K_{d, \varphi(w)}$ when $c_{0}\geq1$,
and $C^{\ast}_{\varphi}K_{d,w}=K_{\varphi(w)}$ when $c_{0}=0$.
We will see later (Lemma \ref{lem:prk}) how to generalize this formula to any symbol.

We shall also use a variant of these partial reproducing kernels. For an integer $q\geq 2$ and $w\in\mathbb C_0$,
we set $K_w^{(q)}(s)=(1-q^{-\overline w}q^{-s})^{-1}$, $s\in\mathbb C_{1/2}$. Clearly, $\|K_w^{(q)}\|\sim C_q (\A w)^{-1/2}$
as $\A w\to 0$ and, setting $k_w^{(q)}=K_w^{(q)}/\|K_w^{(q)}\|$, any sequence $(k_{w_k}^{(q)})$ converges weakly to zero
in $\mathcal H$ as soon as $\A w_k$ goes to zero. The following lemma comes from \cite{bwy}.

\begin{lemma}\label{lem:otherk}
 Let $r\geq 2$ be an integer and $\varphi(s)=c_1+c_r r^{-s}\in\mathcal G$.
 \begin{enumerate}[(i)]
  \item Assume that $r=q^n$ for some positive integer $n$. Then $C_\varphi^*(K_w^{(q)})=K_{\varphi(w)}$.
  \item Assume that $r^m\neq q^n$ for all positive integers $n$ and $m$. Then $C_\varphi^*(K_w^{(q)})=K_{c_1}$.
 \end{enumerate}
\end{lemma}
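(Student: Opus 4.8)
The plan is to identify $C_\varphi^* K_w^{(q)}$ by testing it against the orthonormal basis of monomials $e_m(s):=m^{-s}$, $m\ge 1$, of $\mathcal H$, exploiting the fact that pairing a Dirichlet series against $K_w^{(q)}$ isolates exactly its coefficients supported on powers of $q$. Indeed, if $g(s)=\sum_n b_n n^{-s}\in\mathcal H$, then since $K_w^{(q)}(s)=\sum_{k\ge 0}q^{-k\overline w}(q^k)^{-s}$ one has $\langle g,K_w^{(q)}\rangle=\sum_{k\ge 0}b_{q^k}q^{-kw}$. Applying this with $g=C_\varphi e_m$ reduces the whole problem to understanding the Dirichlet coefficients of $e_m\circ\varphi$.

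The key computation is the expansion of $e_m\circ\varphi$ into a Dirichlet series. Since $\varphi(s)=c_1+c_r r^{-s}$,
\begin{equation*}
  (C_\varphi e_m)(s)=m^{-\varphi(s)}=m^{-c_1}\exp\!\big(-c_r(\log m)\,r^{-s}\big)=\sum_{j\ge 0}\frac{m^{-c_1}(-c_r\log m)^j}{j!}\,(r^j)^{-s},
\end{equation*}
so that $C_\varphi e_m$ is supported on the powers $\{r^j:j\ge 0\}$, with coefficient $m^{-c_1}(-c_r\log m)^j/j!$ at $r^j$. Because these coefficients decay super-exponentially in $j$, this is a legitimate element of $\mathcal H$ and all ensuing rearrangements converge absolutely. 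Combining with the pairing formula gives
\begin{equation*}
  \langle C_\varphi e_m,K_w^{(q)}\rangle=\sum_{k\ge 0,\ q^k\in\{r^j\}}\big(\text{coeff of }C_\varphi e_m\text{ at }q^k\big)\,q^{-kw},
\end{equation*}
so the answer is governed purely by which powers of $q$ are also powers of $r$.

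It then remains to run the two cases. In case (i), writing $r=q^\nu$ (I use $\nu$ to avoid clashing with the generic index $n$), every $r^j=q^{\nu j}$ is a power of $q$, each overlapping term contributes $q^{-kw}=r^{-jw}$, and resumming the exponential series recovers $\sum_{j\ge 0}m^{-c_1}(-c_r(\log m)r^{-w})^j/j!=m^{-c_1}m^{-c_r r^{-w}}=m^{-\varphi(w)}$. In case (ii), the hypothesis $r^m\neq q^n$ for all positive $m,n$ forces $\{q^k:k\ge 0\}\cap\{r^j:j\ge 0\}=\{1\}$, so only the $k=0$ term survives and the sum collapses to the constant coefficient $m^{-c_1}$. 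Since $\langle e_m,K_{\varphi(w)}\rangle=m^{-\varphi(w)}$ and $\langle e_m,K_{c_1}\rangle=m^{-c_1}$, we have matched $\langle e_m,C_\varphi^*K_w^{(q)}\rangle$ with $\langle e_m,K_{\varphi(w)}\rangle$ (respectively $\langle e_m,K_{c_1}\rangle$) for every $m$, and completeness of $\{e_m\}$ yields the two claimed identities.

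The only genuinely delicate point, beyond the bookkeeping of coinciding powers, is to ensure the target kernels lie in $\mathcal H$, i.e. that $\A\varphi(w)>1/2$ and $\A c_1>1/2$. Both follow from $\varphi\in\mathcal G$ with $c_0=0$: condition (a) gives $\varphi(\mathbb C_0)\subseteq\mathbb C_{1/2}$, hence $\varphi(w)\in\mathbb C_{1/2}$ for $w\in\mathbb C_0$; and letting $s\to+\infty$ along the reals shows $\A c_1\ge 1/2$, while $\A c_1=1/2$ is impossible since $c_r r^{-s}$ has range a punctured disc about $0$ and so takes values of negative real part on $\mathbb C_0$, which would push $\varphi$ outside $\mathbb C_{1/2}$. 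This is where I expect the main care to be needed; the algebraic identity itself is then a direct consequence of the power-series expansion of $m^{-\varphi(s)}$.
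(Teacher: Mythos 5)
Your proof is correct. The paper does not actually prove this lemma (it is imported from \cite{bwy}), but your argument --- expanding $m^{-\varphi(s)}=m^{-c_1}\exp\big(-c_r(\log m)r^{-s}\big)$ as a Dirichlet series supported on $\{r^j\}_{j\geq 0}$ and pairing against $K_w^{(q)}$ to isolate the powers of $r$ that are also powers of $q$ --- is precisely the computation behind \cite[Proposition 5]{b2} and Lemma \ref{lem:prk}, so it is essentially the intended approach; your verification that $K_{\varphi(w)}$ and $K_{c_1}$ lie in $\mathcal H$ (via $\varphi(\CC_0)\subseteq\CC_{1/2}$ and the punctured-disc argument forcing $\A c_1>1/2$) is also sound.
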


\section{Connected components in the space of composition operators}

\subsection{The role of the characteristic}

It is a result of Shapiro and Sundberg \cite{ss} that the collection of compact composition operators on $H^2(\DD)$ is arcwise connected. This result breaks down for composition operators on $\mathcal H$ because of the characteristic: two composition operators with different characteristics cannot belong to the same component.

\begin{theorem}\label{thm:differentchar}
The map $\mathcal C(\mathcal H)\to\mathbb N$, $C_\varphi\mapsto \cha(\varphi)$ is continuous.
\end{theorem}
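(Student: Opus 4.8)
Since $\mathbb N$ is discrete, continuity of the map $C_\varphi\mapsto\cha(\varphi)$ is equivalent to the statement that the characteristic is \emph{locally constant}: if $\cha(\varphi_0)=c_0$ and $\cha(\varphi_1)=c_0'$ with $c_0\neq c_0'$, then $\|C_{\varphi_0}-C_{\varphi_1}\|$ is bounded below by a positive constant. Equivalently, it suffices to produce a single number $\delta>0$ such that any two composition operators with distinct characteristics are at operator-norm distance at least $\delta$. The plan is to exhibit a family of test functions on which $C_{\varphi_0}$ and $C_{\varphi_1}$ act very differently precisely because they have different characteristics, and to use these to force the difference of the operators to have large norm. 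The natural candidates are the normalized (partial or ordinary) reproducing kernels $k_{w}$ introduced in Section \ref{sec:rk}, whose weak convergence to zero and whose behaviour under $C_\varphi^*$ are already recorded.

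The key structural fact I would exploit is the commutation with vertical limits together with the adjoint action on reproducing kernels. For the ordinary kernel $K_w$ with $w\in\mathbb C_{1/2}$ we have $C_\varphi^*K_w=K_{\varphi(w)}$, and the estimate $\|K_w\|^2\sim_{\A w\to 1/2}(2\A w-1)^{-1}$. The essential point is that the characteristic $c_0$ governs the \emph{rate} at which $\A\varphi(w)$ approaches the boundary $\A s=1/2$ as $\A w\to 1/2$: writing $\varphi(s)=c_0 s+\psi(s)$ with $\psi$ bounded in real part on vertical strips, we have $\A\varphi(w)=c_0\A w+\A\psi(w)$, so the slope in the $\A w$ variable is exactly $c_0$ (to leading order). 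First I would compute, for a sequence $w_k$ with $\A w_k\to 1/2$, the quantity $\langle C_\varphi^*K_{w_k},K_{w_k}\rangle/\|K_{w_k}\|^2=K_{\varphi(w_k)}(w_k)/\|K_{w_k}\|^2$, and show that its asymptotic size is dictated by $c_0$. Because $\|K_{\varphi(w_k)}\|^2/\|K_{w_k}\|^2$ scales like $(2\A w_k-1)/(2\A\varphi(w_k)-1)\approx 1/c_0$, the two characteristics $c_0\neq c_0'$ produce genuinely different limiting values of a computable bilinear pairing, and the gap between these limits gives a lower bound for $\|C_{\varphi_0}-C_{\varphi_1}\|$ independent of the particular symbols.

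Concretely, I would test the difference on the normalized kernels $k_{w_k}=K_{w_k}/\|K_{w_k}\|$: since these tend weakly to zero, one has $\|(C_{\varphi_0}-C_{\varphi_1})^*k_{w_k}\|\ge \bigl|\langle (C_{\varphi_0}-C_{\varphi_1})^*k_{w_k},\,k_{w_k}\rangle\bigr|$, and I would evaluate the right-hand side as $k\to\infty$ to extract a limit depending only on $c_0$ versus $c_0'$, bounded away from zero. A cleaner route, avoiding delicate off-diagonal estimates for $K_{\varphi(w_k)}(w_k)$, is to test against a second sequence of kernels adapted to the image point: pick $v_k$ with $v_k=\varphi_0(w_k)$ (or close to it) and compute $\langle C_{\varphi_0}k_{w_k}-C_{\varphi_1}k_{w_k},\,k_{v_k}\rangle$, using $\langle C_\varphi^* K_{w_k},K_{v_k}\rangle=K_{\varphi(w_k)}(v_k)$. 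The main obstacle I anticipate is controlling these off-diagonal kernel evaluations $K_{\varphi(w_k)}(v_k)=\zeta(\overline{\varphi(w_k)}+v_k)$ uniformly: one must ensure the arguments stay in a region where $\zeta$ is well understood, and one must handle the case distinctions forced by the Gordon–Hedenmalm class (in particular when $c_0\ge 1$ and $\psi\equiv c_1$ is constant, where $\varphi(w)=c_0 w+c_1$ is affine and the computation is explicit, versus the genuinely Dirichlet case). I would first dispatch the affine case by direct computation to fix the value of $\delta$, then reduce the general case to it by comparing $\A\varphi(w_k)$ with its leading linear term $c_0\A w_k$ and showing the lower-order contribution of $\psi$ does not affect the limiting pairing.
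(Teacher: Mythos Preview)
Your kernel approach has a real gap. The claim that the limit of $\|K_{\varphi(w_k)}\|^2/\|K_{w_k}\|^2$ (or of the diagonal pairing) ``scales like $1/c_0$'' is false: since $\A\varphi(w)=c_0\A w+\A\psi(w)$ and $\A\psi(w)$ can be any nonnegative number when $c_0\ge 1$, the contribution of $\psi$ can swamp the linear piece entirely. Take $\varphi(s)=s+1$ (characteristic $1$) versus any $\varphi'(s)=2s+\psi'(s)$ (characteristic $2$): as $\A w\to 1/2$ both $\A\varphi(w)\ge 3/2$ and $\A\varphi'(w)\ge 1$ stay bounded away from $1/2$, so both $\|C_\varphi^*k_w\|$ and $\|C_{\varphi'}^*k_w\|$ tend to $0$ and no lower bound emerges. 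Your ``cleaner route'' with $v_k=\varphi_0(w_k)$ is tailored to a single $\varphi_0$ and cannot be uniform over all competing $\varphi'$: the off-diagonal term $\zeta(\overline{\varphi'(w_k)}+v_k)$ depends on $\psi'$ in a way you have no handle on. In short, reproducing kernels see the \emph{value} $\varphi(w)$, not the decomposition $c_0 w+\psi(w)$, and different characteristics can produce indistinguishable value behaviour on $\mathbb C_{1/2}$.

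The paper's proof is entirely different: it tests on $p^{-s}$ for primes $p$ and compares the \emph{Dirichlet coefficients} of $p^{-\varphi}$ and $p^{-\varphi_j}$, using that norm convergence in $\mathcal H$ forces coefficient-wise convergence. From $p^{-\varphi(s)}=(p^{c_0})^{-s}\,p^{-c_1}\,p^{-\sum_{k\ge 2}c_k k^{-s}}$ one sees that the $(p^{c_0})^{-s}$-coefficient of $p^{-\varphi}$ equals $p^{-c_1}\neq 0$ while all smaller coefficients vanish; this immediately dispatches the cases $c_0=0$ and $\cha(\varphi_j)>c_0$. The genuinely hard case $0<d_0:=\cha(\varphi_j)<c_0$ requires a combinatorial lemma expressing the coefficient of $(q^l)^{-s}$ in $p^{-\sum_{k\ge2} c_k k^{-s}}$ as $-c_{q^l}\log p$ plus a polynomial in the lower data $c_{q^i}$ ($i<l$), together with a finite induction along a specially chosen chain of primes $q_1<\cdots<q_{m+1}$ with $q_{l+1}>q_l^{l+1}$ to kill these lower-order contributions one by one; this eventually contradicts that the $q_{m+1}^{c_0}$-coefficient of $q_{m+1}^{-\varphi}$ is nonzero.
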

We prove this result by a reasoning on the coefficients of $p^{-\varphi(s)}$ which can be adapted to a much more general context than $\mathcal H$. We need to introduce the following definition.

\begin{definition}
Let $l\geq 1$. A {\bf weighted partition} of $l$ is a triple $(r,i,\gamma)$ with $r\geq 1$, $i\in\{1,\dots,l\}^r$ satisfying $i_1<i_2<\cdots<i_r$, $\gamma\in \mathbb N^r$ and $\gamma_1 i_1+\cdots+\gamma_r i_r=l$.
The set of weighted partitions of $l$ will be denoted by $\mathcal{WP}_l$.
\end{definition}

\begin{lemma}\label{lem:differentchar}
Let $l\geq 1$ and let $p$ be a prime number. For each $(r,i,\gamma)$ a weighted partition of $l$,
there exists a real number $u(p,l,r,i,\gamma)$ such that, for all sequences $c=(c_k)_{k\geq 2}$ with $\sigma_a\left(\sum_k c_k k^{-s}\right)<+\infty$, for all prime numbers $q$,
$$p^{-\sum_{k\geq 2}c_k k^{-s}}=\sum_{k=1}^{+\infty} \alpha_k(c) k^{-s}$$
with
$$\alpha_{q^l}(c)=-c_{q^l}\log p+\sum_{\substack {(r,i,\gamma)\in\mathcal{WP}_l\\ i_r<l}} u(p,l,r,i,\gamma)
(c_{q^{i_1}})^{\gamma_1}\cdots (c_{q^{i_r}})^{\gamma_r}.$$
\end{lemma}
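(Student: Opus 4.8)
The plan is to expand $p^{-\psi(s)}=\exp(-\log p\cdot\psi(s))$ using the exponential power series and to read off the Dirichlet coefficient of $(q^l)^{-s}$ directly; the key observation is that the combinatorics only sees partitions of the exponent $l$, which is precisely what makes the resulting constant independent of $q$.

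Write $L=\log p$ and $\psi(s)=\sum_{k\geq 2}c_k k^{-s}$. Since $\sigma_a(\psi)<+\infty$, for every $\sigma>\sigma_a(\psi)$ the quantity $\sum_{k\geq 2}|c_k|k^{-\sigma}$ is finite, and
\begin{equation*}
\sum_{m=0}^{\infty}\frac{L^m}{m!}\Bigl(\sum_{k\geq 2}|c_k|k^{-\sigma}\Bigr)^m=\exp\Bigl(L\sum_{k\geq 2}|c_k|k^{-\sigma}\Bigr)<+\infty .
\end{equation*}
First I would use this to justify that on $\mathbb{C}_\sigma$ the expansion
\begin{equation*}
p^{-\psi(s)}=\sum_{m=0}^{\infty}\frac{(-L)^m}{m!}\Bigl(\sum_{k\geq 2}c_k k^{-s}\Bigr)^m
\end{equation*}
converges absolutely, so every rearrangement is legitimate and the Dirichlet coefficients $\alpha_k(c)$ are well defined by uniqueness of Dirichlet expansions.

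Next I would extract the coefficient of $(q^l)^{-s}$. Expanding the $m$-th power yields terms $c_{k_1}\cdots c_{k_m}(k_1\cdots k_m)^{-s}$ with each $k_j\geq 2$, so only factorisations $k_1\cdots k_m=q^l$ contribute. As $q$ is prime, each $k_j$ is a power $q^{a_j}$ with $a_j\geq 1$ and $a_1+\cdots+a_m=l$; hence $m\leq l$ and
\begin{equation*}
\alpha_{q^l}(c)=\sum_{m=1}^{l}\frac{(-L)^m}{m!}\sum_{\substack{a_1+\cdots+a_m=l\\ a_j\geq 1}}c_{q^{a_1}}\cdots c_{q^{a_m}} .
\end{equation*}
The term $m=1$ gives exactly $-c_{q^l}\log p$. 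For $m\geq 2$ every part satisfies $a_j\leq l-1$, and grouping the ordered tuples $(a_1,\dots,a_m)$ by their distinct values $i_1<\cdots<i_r$ with multiplicities $\gamma_1,\dots,\gamma_r$ produces precisely the weighted partitions $(r,i,\gamma)\in\mathcal{WP}_l$ with $i_r<l$, each one arising $\binom{m}{\gamma_1,\dots,\gamma_r}=m!/(\gamma_1!\cdots\gamma_r!)$ times with $m=\gamma_1+\cdots+\gamma_r$.

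Finally I would read off
\begin{equation*}
u(p,l,r,i,\gamma)=\frac{(-\log p)^{\gamma_1+\cdots+\gamma_r}}{\gamma_1!\cdots\gamma_r!},
\end{equation*}
a real number depending on $p$ and $\gamma$ alone, and in particular independent of both the sequence $c$ and the prime $q$. I do not expect a genuine obstacle: the convergence is routine, and the only point requiring care is the bookkeeping that matches ordered tuples to weighted partitions and that isolates the $c_{q^l}$ term as the unique weighted partition with $i_r=l$. The conceptual content worth stressing is that this combinatorics depends on $q$ solely through the exponent $l$, which is exactly why $u$ is uniform over all primes $q$.
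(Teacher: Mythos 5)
Your proposal is correct and follows essentially the same route as the paper: both expand $p^{-\psi(s)}$ as an exponential (the paper writes it as the product $\prod_{k\geq 2}e^{-c_k(\log p)k^{-s}}$, you as the power series of $e^{-(\log p)\psi(s)}$ regrouped by the multinomial theorem), extract the coefficient of $(q^l)^{-s}$ by noting that, $q$ being prime, only powers of $q$ with exponents forming a weighted partition of $l$ can contribute, and arrive at the identical constant $u(p,l,r,i,\gamma)=\frac{(-\log p)^{\gamma_1+\cdots+\gamma_r}}{\gamma_1!\cdots\gamma_r!}$. Your justification of absolute convergence via $\sigma_a(\psi)<+\infty$ and your isolation of the $i_r=l$ partition as the single term $-c_{q^l}\log p$ are both sound.
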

\begin{proof}
We can write formally
$$p^{-\sum_{k\geq 2}c_k k^{-s}}=\prod_{k\geq 2}\left(\sum_{n=0}^{+\infty} \frac{(-1)^n (c_k)^n}{n!}
(\log p)^n (k^n)^{-s}\right).$$
If we expand the infinite product and since $q$ is prime, the term $(q^l)^{-s}$ can only be obtained as the sum of products of the terms $\big( (q^{i_1})^{\gamma_1}\cdots  (q^{i_r})^{\gamma_r} \big)^{-s}$
with $\gamma_1 i_1+\cdots+\gamma_r i_r=l$. More precisely,
$$u(p,l,r,i,\gamma)=\frac{(-1)^{\gamma_1+\cdots+\gamma_r}}{\gamma_1! \cdots \gamma_r!}(\log p)^{\gamma_1+\cdots+\gamma_r}.$$
That this formal computation leads to a convergent Dirichlet series follows from the absolute convergence of the involved Dirichlet series in some half-plane.
\end{proof}

\begin{proof}[Proof of Theorem \ref{thm:differentchar}]
Let $\varphi(s)=c_0s+c_1+\sum_{k\geq 2}c_k k^{-s}\in\mathcal C(\mathcal H)$ and let $(C_{\varphi_j})\in\mathcal C(\mathcal H)$ converging to $C_\varphi$ in the operator norm topology. In order to proceed by contradiction, we assume that $(\cha(\varphi_j))$ does not converge to $c_0$. Extracting if necessary, we may assume that $\cha(\varphi_j)\neq c_0$ for all $j$. We first handle the easiest case $c_0=0$.
Since, for all $j$,
\begin{align*}
\|2^{-\varphi}-2^{-\varphi_j}\|&\geq \lim_{\sigma\to+\infty} |2^{-\varphi(\sigma)}-2^{-\varphi_j(\sigma)}|\\
&\geq 2^{-c_1}>0,
\end{align*}
we get that $(C_{\varphi_j})$ cannot converge to $C_\varphi$. Hence, we assume $c_0\geq 1$.

We write $\varphi_j(s)=d_0(j)s+d_1(j)+\sum_{k\geq 2}d_k(j) k^{-s}$.
 Let $p,q$ be two prime numbers and $l\geq 0$.  We write
$$p^{-\varphi(s)}=\sum_{k\geq 1}a_k k^{-s}\textrm{ and }p^{-\varphi_j(s)}=\sum_{k\geq 1}b_k(j) k^{-s}.$$
Let us first assume that $\cha(\varphi_j)> c_0$ for an infinite number of integers $j$, hence for all $j$ by extraction.
Then $a_{p^{c_0}}=p^{-c_1}$ whereas $b_{p^{c_0}}(j)=0$. This contradicts that $p^{-\varphi_j}$
tends to $p^{-\varphi}$ in $\mathcal H$.

Therefore it remains to handle the case $\cha(\varphi_j)<c_0$ for all $j$. Upon another extraction we may and
shall assume that $d_0(j)=d_0\in\{0,\dots,c_0-1\}$.
When $p\neq q$, for all $l\geq 1$, $a_{p^{d_0}q^l}=0$ whereas $b_{p^{d_0}q^l}(j)=p^{-d_1(j)}\alpha_{q^l}(d(j))$.
Hence
\begin{equation}\label{eq:dc1}
p\neq q,\ l\geq 1\implies p^{-d_1(j)}\alpha_{q^l}(d(j))\xrightarrow{j\to+\infty}0.
\end{equation}
When $p=q$, setting $m=c_0-d_0$, then $a_{q^{c_0}}=q^{-c_1}$
whereas $b_{q^{c_0}}(j)=q^{-d_1(j)}\alpha_{q^m}(d(j))$.  Thus
\begin{equation}\label{eq:dc2}
q^{-d_1(j)}\alpha_{q^m}(d(j))\xrightarrow{j\to+\infty}q^{-c_1}.
\end{equation}
We then fix an increasing sequence of prime numbers $(q_l)_{1\leq l\leq m+1}$ such that, for all $l=1,\dots,m$, $q_{l+1}>(q_l)^{l+1}$. We show by a finite induction that, for all $l=1,\dots,m$,
\begin{equation}\label{eq:dc3}
q_l^{-d_1(j)} d_{q_{m+1}^l}(j)\xrightarrow{j\to+\infty}0.
\end{equation}
For $l=1$, this is \eqref{eq:dc1} applied with $p=q_1$, $q=q_{m+1}$ and we use Lemma \ref{lem:differentchar} to compute the value of $\alpha_{q_{m+1}}(d(j))$. Let us assume that \eqref{eq:dc3} has been obtained for $l-1$
and let us prove it for $l$. We apply again \eqref{eq:dc1} with $p=q_l$ and $q=q_{m+1}$ so that
$$
\begin{array}{l}
q_l^{-d_1(j)}\Bigg(-d_ {q_{m+1}^l}(j)\log(q_l)+\\
\displaystyle\sum_{\substack{(r,i,\gamma)\in\mathcal{WP}_l \\ i_r<l}}u(q_l,l,r,i,\gamma)
(d_{q_{m+1}^{i_1}}(j))^{\gamma_1}\cdots (d_{q_{m+1}^{i_r}}(j))^{\gamma_r}\Bigg)\xrightarrow{j\to+\infty}0.
\end{array}
$$
Now since $q_l>q_{i_1}^{\gamma_1}\cdots q_{i_r}^{\gamma_r}$ for all $(r,i,\gamma)\in\mathcal{WP}_l$ with $i_r<l$, we may write
$$q_l^{-d_1(j)}|d_{q_{m+1}^{i_1}}(j)|^{\gamma_1}\cdots |d_{q_{m+1}^{i_r}}(j)|^{\gamma_r}\leq
|q_{i_1}^{-d_1(j)}d_{q_{m+1}^{i_1}}(j)|^{\gamma_1}\cdots |q_{i_r}^{-d_1(j)}d_{q_{m+1}^{i_r}}(j)|^{\gamma_r}$$
and using the induction hypothesis we obtain that
$$q_l^{-d_1(j)}(d_{q_{m+1}^{i_1}}(j))^{\gamma_1}\cdots (d_{q_{m+1}^{i_r}}(j))^{\gamma_r}\xrightarrow{j\to+\infty}0.$$
This yields \eqref{eq:dc3} at rank $l$.

Let us now conclude. Property \eqref{eq:dc2} for $q=q_{m+1}$ tells us that
$$
\begin{array}{l}
q_{m+1}^{-d_1(j)}\Bigg(-d_ {q_{m+1}^m}(j)\log(q_{m+1})+\\
\displaystyle\sum_{\substack{(r,i,\gamma)\in\mathcal{WP}_m\\
i_r<m}}u(q_{m+1},m,r,i,\gamma)
(d_{q_{m+1}^{i_1}}(j))^{\gamma_1}\cdots (d_{q_{m+1}^{i_r}}(j))^{\gamma_r}\Bigg)\xrightarrow{j\to+\infty}q_{m+1}^{-c_1}.
\end{array}
$$
But since $q_{m+1}\geq q_m$, \eqref{eq:dc3} implies that $q_{m+1}^{-d_1(j)}d_ {q_{m+1}^m}(j)$ tends to zero. Furthermore we can repeat the argument of the inductive step to get
$$q_{m+1}^{-d_1(j)}\sum_{\substack{(r,i,\gamma)\in\mathcal{WP}_m\\
i_r<m}}u(q_{m+1},m,r,i,\gamma)
(d_{q_{m+1}^{i_1}}(j))^{\gamma_1}\cdots (d_{q_{m+1}^{i_r}}(j))^{\gamma_r}\xrightarrow{j\to+\infty}0.$$
This yields the contradiction we are looking for.
\end{proof}

\begin{remark}
In fact our proof shows that the map $\mathcal C(\mathcal H)\to\mathbb N$,
{$C_\varphi\mapsto \textrm{char}(\varphi)$ is continuous }
even if we endow $\mathcal C(\mathcal H)$ with the weak operator topology.
\end{remark}

\subsection{Compact composition operators}

We now show that the set of compact composition operators with prescribed characteristic is connected. The proof is necessary different from the proof on $H^2(\mathbb D)$ since we cannot connect any compact composition operator to a composition operator with constant symbol.

\begin{theorem}\label{thm:connectedcompact}
Let $c_0\in\mathbb N_0$. The set of compact composition operators with characteristic equal to $c_0$ is arcwise connected.
\end{theorem}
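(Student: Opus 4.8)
The plan is to show that any compact composition operator $C_\varphi$ with $\mathrm{char}(\varphi)=c_0$ can be connected by a continuous arc (in the operator norm topology) to a single fixed reference operator whose characteristic is $c_0$, while keeping every operator along the arc both compact and of characteristic $c_0$. Since arcwise connectedness is transitive, it suffices to connect an arbitrary such $C_\varphi$ to one canonical compact operator $C_{\varphi_*}$ of characteristic $c_0$. The natural candidate for $\varphi_*$ is a symbol with \emph{restricted range}, for which compactness is guaranteed by \cite[Theorem 21]{b1}; for instance $\varphi_*(s)=c_0 s+a$ with $\A a$ large enough (when $c_0\geq 1$) or a constant symbol landing deep inside $\mathbb C_{1/2}$ (when $c_0=0$), these being compact and of the correct characteristic.

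First I would set up the homotopy. Writing $\varphi(s)=c_0 s+\psi(s)$ with $\psi(s)=\sum_{n\geq 1}c_n n^{-s}$, the characteristic is pinned by the coefficient $c_0$ of the linear term, so any deformation that keeps $c_0$ fixed and only moves $\psi$ automatically preserves the characteristic. The most robust device is a \emph{translation deformation}: for $t\in[0,1]$ set
\begin{equation*}
\varphi_t(s)=c_0 s+\psi(s)+t\,\beta,
\end{equation*}
where $\beta>0$ is a real parameter chosen so that $\varphi_1$ has restricted range. Adding a positive real constant only increases $\A\varphi$, so each $\varphi_t$ still maps into the correct half-plane and remains in $\mathcal G$; moreover for $t$ close to $1$ the range is pushed strictly inside a smaller half-plane, making $C_{\varphi_1}$ compact by the restricted-range criterion. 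The continuity $t\mapsto C_{\varphi_t}$ in operator norm should follow from estimating $\|C_{\varphi_t}-C_{\varphi_{t'}}\|$ via the reproducing kernel action $C_{\varphi}^*K_w=K_{\varphi(w)}$ together with the explicit norm asymptotics $\|K_w\|^2\sim (2\A w-1)^{-1}$, reducing the norm bound to the modulus of continuity of $w\mapsto\zeta(\overline{\varphi_t(w)}+\varphi_t(w))$ uniformly in $w$.

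The genuinely delicate point is \textbf{preservation of compactness along the entire arc}, not merely at the endpoints. At an interior time $t\in(0,1)$ the symbol $\varphi_t$ need not have restricted range, so I cannot invoke \cite[Theorem 21]{b1} directly; I must argue that $C_{\varphi_t}$ stays compact because $C_{\varphi}=C_{\varphi_0}$ is compact and the perturbation is ``in the right direction.'' The clean way to handle this is to factor $C_{\varphi_t}=W_t\,C_{\varphi}$ or to realize $C_{\varphi_t}$ as a composition of $C_\varphi$ with a translation/weighting that is \emph{bounded}, so that composing a compact operator with a bounded operator yields a compact operator. Concretely, $p^{-\varphi_t(s)}=p^{-t\beta}\,p^{-\varphi(s)}$ type factorizations suggest writing $C_{\varphi_t}$ as a bounded weighted composition operator acting after $C_\varphi$; the boundedness of the weight must be checked using the membership of $\varphi_t$ in $\mathcal G$ and the uniform convergence of $\psi$ in each $\mathbb C_\varepsilon$. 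I expect this factorization step—identifying the correct bounded multiplier and verifying its boundedness uniformly in $t$—to be the main obstacle, and it is here that the cases $c_0=0$ and $c_0\geq 1$ may require slightly different arguments, since the target half-plane and the reproducing kernels differ. Once compactness is secured on the whole arc, concatenating the arc from $C_\varphi$ to $C_{\varphi_1}$ with the (trivial, since it lands in restricted range) arc from $C_{\varphi_1}$ to the canonical $C_{\varphi_*}$ completes the proof.
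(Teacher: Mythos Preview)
Your two--step outline (push $C_\varphi$ to a restricted-range symbol, then link any two restricted-range symbols) matches the paper's strategy, and your output-translation $\varphi_t=\varphi+t\beta$ is a legitimate variant of the paper's input-translation $\varphi_\sigma(\cdot)=\varphi(\cdot+\sigma)$. However, two of your steps have genuine gaps.

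First, the continuity argument. Reproducing kernels give \emph{lower} bounds for operator norms, not upper bounds, so ``estimating $\|C_{\varphi_t}-C_{\varphi_{t'}}\|$ via $C_\varphi^*K_w=K_{\varphi(w)}$'' will not produce norm continuity. The factorization you already have in hand is the right tool for this as well: since $f\circ\varphi_t=(T_{t\beta}f)\circ\varphi$, one has $C_{\varphi_t}=C_\varphi\circ T_{t\beta}$ with $T_\sigma f(s)=f(s+\sigma)$ a contraction. This makes compactness of each $C_{\varphi_t}$ immediate (compact composed with bounded), so what you flag as the ``main obstacle'' is in fact one line. For norm continuity at $t=0$ you cannot simply bound $\|T_{t\beta}-I\|$ (this does not tend to zero), but the paper's key observation---which you are missing---is that a strongly continuous family of contractions composed with a \emph{compact} operator is norm continuous: $T_\sigma\to T_{\sigma_0}$ pointwise and $\{C_\varphi f:\|f\|\le 1\}$ relatively compact force uniform convergence on that set. (With your factorization order one runs the same argument on the adjoint, since $T_{t\beta}$ is self-adjoint for real $\beta$.)

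Second, connecting two restricted-range symbols is not ``trivial''---this is actually where the paper does the most explicit work. One takes $\varphi_\lambda=c_0s+(1-\lambda)\psi_0+\lambda\psi_1$ and estimates $\|C_{\varphi_\lambda}f-C_{\varphi_{\lambda'}}f\|$ directly via the $\TT^\infty$-integral formula and the mean value inequality, using a uniform bound on $|f'_{\chi^{c_0}}|$ in $\overline{\CC_{1/2+\varepsilon}}$. For that last step one needs $|\psi_0|,|\psi_1|$ bounded on $\CC_+$, which is why the paper prefers the input shift $\varphi(\cdot+1)$: it forces $\psi(\cdot+1)$ to be absolutely convergent, hence bounded, on $\overline{\CC_+}$. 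Your output shift $\psi+\beta$ does not yield boundedness of $\psi$, so you would need an additional input-shift before carrying out the convex-combination step.
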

\begin{proof}
We fix $\varphi\in\mathcal G$ with $\cha(\varphi)=c_0$ such that $C_\varphi$ is compact. We first show that there exists an arc of compact composition operators between $C_\varphi$ and $C_{\tilde\varphi}$ for some $\tilde \varphi\in\mathcal G$ with $\tilde\varphi(\CC_+)\subseteq\CC_{\frac 12+\veps}$ for some $\veps>0$, $\cha(\tilde\varphi)=c_0$ and,
writing $\tilde\varphi=c_0s+\tilde\psi$, $\tilde\psi(\CC_+)$ is bounded. Define $\varphi_\sigma(\cdot)=\varphi(\cdot+\sigma)$. We claim
that $\sigma\mapsto C_{\varphi_\sigma}$ is continuous (for the operator norm topology).
The choice $\tilde\varphi=\varphi_1$ will answer the problem: for $c_0=0$, $\varphi_1$ has restricted range,
so that $\varphi_1(\CC_+)\subseteq \CC_{\frac 12+\veps}$; for $c_0\geq 1$, we even have $\varphi(\CC_+)\subseteq \CC_1$.
Moreover, if $\varphi=c_0s+\psi$, since the abscissa of uniform convergence of $\psi$ is nonpositive,
its abscissa of absolute convergence does not exceed $1/2$.
Hence, $\psi(\cdot+1)$ is an absolutely convergent Dirichlet series in $\overline{\CC_+}$ which implies that $\tilde\psi(\CC_+)$ is bounded. Finally, since for any $\sigma>0$,
$\varphi_\sigma$ has restricted range, $C_{\varphi_\sigma}$ is compact.

Let $T_\sigma$ be the composition operator induced by the map $s\mapsto s+\sigma$, $\sigma\geq 0$, so that $C_{\varphi_\sigma}=T_\sigma\circ C_\varphi$. Now observe that for a fixed $f\in\mathcal H$ and a fixed $\sigma_0\geq 0$, $T_\sigma f\to T_{\sigma_0}f$ provided $\sigma\to\sigma_0$.
Furthermore, $\{C_\varphi(f):\ \|f\|\leq 1\}$ is  relatively compact in $\mathcal H$ and the family $(T_\sigma)_{\sigma\geq 0}$ is equicontinuous since $\|T_\sigma\|\leq 1$ for all $\sigma\geq 0$.
It follows from a standard compactness argument that the pointwise convergence of  $T_\sigma f$ to $ T_{\sigma_0}f$ when $\sigma$ tend to $\sigma_0$ is uniform on $\{C_\varphi(f):\ \|f\|\leq 1\}$.
Namely for all $\veps>0$ there exists $\delta>0$ such that
$$\|f\|\leq 1 \textrm{ and }|\sigma-\sigma_0|<\delta \implies \|C_{\varphi_\sigma}(f)-C_{\varphi_{\sigma_0}}(f)\|\leq\veps.$$
This proves the continuity of $\sigma\mapsto C_{\varphi_\sigma}$.

To finish the proof, we only need to prove that, given $\veps>0$ and $C>0$, for two symbols $\varphi_0=c_0s+\psi_0$,
$\varphi_1=c_0s+\psi_1$ such that $\varphi_0(\CC_+),\varphi_1(\CC_+)$ are contained in ${\CC_{\frac 12+\veps}}$ and {$|\psi_{0}(s)|,|\psi_1(s)|\leq C$ for all $s\in\CC_+$,}
there exists an arc in $\mathcal C(\mathcal H)$ between $\cphiz$ and $\cphiu$ consisting in compact composition operators.
Define, for $\lambda\in(0,1)$, $\varphi_\lambda=c_0s +(1-\lambda)\psi_0+\lambda\psi_1$.
Again it suffices to show that $\lambda\mapsto C_{\varphi_\lambda}$ is continuous since each $\varphi_\lambda$ has restricted range. Pick $f\in\mathcal H$ with $\|f\|\leq 1$ and write
\begin{align*}
\left\|C_{\varphi_\lambda}(f)-C_{\varphi_{\lambda'}}(f)\right\|^2&=\int_{\ttinf}|(f\circ \varphi_\lambda)_\chi(0)-
(f\circ \varphi_{\lambda'})_\chi(0)|^2dm(\chi)\\
&=\int_{\ttinf}|f_{\chi^{c_0}}((\varphi_\lambda)_\chi(0))-f_{\chi^{c_0}}((\varphi_{\lambda'})_\chi(0))|^2dm(\chi).
\end{align*}
Since $(\varphi_\lambda)_\chi$ is a vertical limit of $\varphi_\lambda$, for all $\chi\in\ttinf$, $(\varphi_\lambda)_\chi(0)$ and $(\varphi_{\lambda'})_\chi(0)$ belong to $\overline{\CC_{\frac 12+\veps}}$. Since $\|f_{\chi^{c_0}}\|\leq 1$,
there exists some $M>0$, depending only on $\veps$, such that, for all $\chi\in\ttinf$,
$$\sup_{s\in \overline{\CC_{\frac 12+\veps}}} |f_{\chi^{c_0}}'(s)|\leq M.$$
Therefore
\begin{align*}
\left\|C_{\varphi_\lambda}(f)-C_{\varphi_{\lambda'}}(f)\right\|^2&\leq M^2 \int_{\ttinf} |(\varphi_\lambda)_\chi(0))-(\varphi_{\lambda'})_\chi(0)|^2dm(\chi)\\
&\lesssim |\lambda-\lambda'|^2 \int_{\ttinf} \big( |(\psi_0)_\chi(0)|+ |(\psi_1)_\chi(0)| \big)^2 dm(\chi)\\
&\lesssim |\lambda-\lambda'|^2
\end{align*}
since for all $\chi\in\ttinf$, $|(\psi_0)_\chi(0)|\leq C$ and $|(\psi_1)_\chi(0)|\leq C.$
\end{proof}

\subsection{Essential norm of the difference and connected components}

The aim of this subsection is to give a necessary condition for two composition operators to be in the same connected component. Inspired by \cite{ma}, we will deduce it from a minoration of the essential norm of the difference of two composition operators. Usually, this can been done using reproducing kernels. As pointed out in Section \ref{sec:rk}, reproducing kernels are less efficient for composition operators on $\mathcal H$ and they have been replaced by partial reproducing kernels
to study the compactness of composition operators with polynomial symbols.
However, Proposition 5 of \cite{b2} is not sufficient in our context, even if we intend to prove
that two composition operators with polynomial symbols are not in the same component: an arc between these two operators could involve non polynomial symbols.

Therefore, we need supplementary material about partial reproducting kernels.
We fix $\mathcal Q$ a finite subset of $\mathcal P$, the set of prime numbers.
Let $\mathbb N_{\mathcal Q}$ be the set of integers which can be factorized using prime numbers in $\mathcal Q$.
For $\varphi(s)=c_0s+c_1+\sum_{n\geq 1}c_nn^{-s}$, we let
$$\varphi_{\mathcal Q}(s)=c_0s+c_1+\sum_{n\in\mathbb N_{\mathcal Q}} c_n n^{-s}=: c_0 s+\psi_{\mathcal Q}(s).$$
We first observe that $\psi_{\mathcal Q}$,
which is well-defined on $\CC_{1/2}$, where $\psi_{\mathcal Q}$ converges absolutely, extends to $\CC_+$,
and that $\psi_{\mathcal Q}(\CC_+)\subseteq\overline{\CC_+}$.
Indeed, let us write any $z\in\mathbb T^\infty$ as $z=(z_\mathcal Q,w)$ where $z_\mathcal Q=(z_{i_1},\dots,z_{i_d})$, $\mathcal Q=\{p_{i_1},\dots,p_{i_d}\}$.
We may define $F$ a.e. on $\TT^d$ by
\[ F(z_{\mathcal Q})=\int_{\TT^\infty}\frac{\mathcal B\psi(z_{\mathcal Q},w)-1}{\mathcal B\psi(z_{\mathcal Q},w)+1}dm(w) \]
which clearly belongs to $H^{\infty}(\TT^d)$ with $\|F\|_\infty\leq 1$. By looking at the Taylor coefficients, $F=(\mathcal B \psi_{\mathcal Q}-1)/(\mathcal B \psi_{\mathcal Q}+1)$ in a
neighbourhood of zero. Therefore $\mathcal B\psi_{\mathcal Q}$ extends holomorphically to $\DD^d$ with $\A (\mathcal B\psi_{\mathcal Q})\geq 0$.
This yields that $\psi_{\mathcal Q}$ extends to $\CC_+$ with $\psi_{\mathcal Q}(\CC_+)\subseteq \overline{\CC_+}$.

By the open mapping theorem, $\psi_{\mathcal Q}(\CC_+)\subseteq \CC_+$ excepts if $\psi_{\mathcal Q}$ is constant.
Obviously when $c_0=0$, if $\psi_{\mathcal Q}$ is constant, then
$\psi_{\mathcal Q}=c_1$ and $\A(c_1)>1/2$. Hence, in all cases,
for all $\mathcal Q\subseteq\mathcal P$,
$\varphi_{\mathcal Q}\in \mathcal G$.

Let us also introduce a variant of partial reproducing kernels. We set, for $w\in\CC_+$,
$$K_{\mathcal Q,w}(s)=\prod_{p\in\mathcal Q}\frac{1}{1-p^{-(\bar w+s)}}$$
which is a well-defined function on $\CC_+$, belonging to $\mathcal H$, with
$$\|K_{\mathcal Q,w}\|^2=\prod_{p\in\mathcal Q}\frac 1{1-p^{-2\A w}}.$$
Moreover, it is easy to check for any sequence $(s_k)\subseteq\CC_+$ with $\A(s_k)\to 0$, $K_{\mathcal Q,s_k}/\|K_{\mathcal Q,s_k}\|$ converges weakly to zero.

The following lemma links $C_\varphi$, $\varphi_\mathcal Q$ and $K_{\mathcal Q,w}$. Its proof  follows exactly that of Proposition 5 in \cite{b2}.
\begin{lemma}\label{lem:prk}
Let $\varphi\in\mathcal G$ and let $\mathcal Q\subseteq\mathcal P$ finite. Then, for all $w\in\CC_+$,
\[ C_\varphi^*(K_{\mathcal Q,w})=
\left\{
\begin{array}{ll}
K_{\mathcal Q,\varphi_{\mathcal Q}(w)}&\textrm{ provided }c_0\geq 1\\
K_{\varphi_{\mathcal Q}(w)}&\textrm{ provided }c_0=0.\\
\end{array}\right. \]
\end{lemma}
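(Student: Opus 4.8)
The plan is to compute the action of $C_\varphi^*$ on the variant partial reproducing kernel $K_{\mathcal Q,w}$ by testing it against arbitrary $f\in\mathcal H$, exploiting the partial reproducing property of $K_{\mathcal Q,w}$ to reduce everything to the coefficients of $C_\varphi f$ supported on $\mathbb N_{\mathcal Q}$. First I would recall that for any $g(s)=\sum_n b_n n^{-s}\in\mathcal H$ one has $\langle g,K_{\mathcal Q,w}\rangle=\sum_{n\in\mathbb N_{\mathcal Q}} b_n n^{-w}$, since the Fourier coefficients of $K_{\mathcal Q,w}$ are $n^{-\bar w}$ for $n\in\mathbb N_{\mathcal Q}$ and $0$ otherwise. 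Applying this to $g=C_\varphi f=f\circ\varphi$ gives
\[ \langle f, C_\varphi^*(K_{\mathcal Q,w})\rangle=\langle C_\varphi f,K_{\mathcal Q,w}\rangle=\sum_{n\in\mathbb N_{\mathcal Q}}\widehat{(f\circ\varphi)}(n)\,n^{-w}. \]
The right-hand side should be recognizable as the value at $w$ of a Dirichlet series built only from the $\mathcal Q$-part of $\varphi$, which is precisely what $\varphi_{\mathcal Q}$ encodes.

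The key step is to show that the $\mathcal Q$-supported coefficients of $f\circ\varphi$ depend only on $\varphi_{\mathcal Q}$. I would argue this at the level of the Bohr lift: write $z\in\TT^\infty$ as $(z_{\mathcal Q},w)$ with $z_{\mathcal Q}$ the coordinates indexed by $\mathcal Q$. Extracting the monomials $z_1^{\alpha_1}\cdots$ supported purely on $\mathcal Q$ amounts to integrating out the remaining variables $w$, and since $\psi=\sum c_n n^{-s}$ splits multiplicatively under the Bohr correspondence, the contributions from primes outside $\mathcal Q$ integrate to their constant term. Concretely, when $c_0=0$ the composition $f\circ\varphi$ has $\mathcal Q$-part governed by $f\circ\varphi_{\mathcal Q}$ and reproducing against $K_{\mathcal Q,w}$ selects $\sum_{n\in\mathbb N_{\mathcal Q}}\widehat{(f\circ\varphi_{\mathcal Q})}(n)n^{-w}=\langle f\circ\varphi_{\mathcal Q},K_{w}\rangle=\langle f,K_{\varphi_{\mathcal Q}(w)}\rangle$, using the ordinary reproducing property $C_{\varphi_{\mathcal Q}}^*K_w=K_{\varphi_{\mathcal Q}(w)}$ valid since $\varphi_{\mathcal Q}\in\mathcal G$ has zero characteristic and hence maps into $\CC_{1/2}$. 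This yields $C_\varphi^*(K_{\mathcal Q,w})=K_{\varphi_{\mathcal Q}(w)}$.

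For the case $c_0\geq 1$ the same bookkeeping applies, but now the factor $n^{-c_0 s}$ contributes extra prime powers; the point is that $K_{\mathcal Q,w}$ only sees integers factorizable over $\mathcal Q$, so the relevant composition is again with $\varphi_{\mathcal Q}(s)=c_0 s+\psi_{\mathcal Q}(s)$, and the $\mathcal Q$-supported coefficients reassemble into $K_{\mathcal Q,\varphi_{\mathcal Q}(w)}$ rather than a full reproducing kernel. I would verify this by checking the identity on the dense set of Dirichlet polynomials $f$ supported on $\mathbb N_{\mathcal Q}$, where the computation is a finite manipulation, and then extend by density and continuity of $C_\varphi^*$.

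The main obstacle I anticipate is making rigorous the claim that passing to the $\mathcal Q$-supported coefficients of $f\circ\varphi$ is exactly equivalent to composing with $\varphi_{\mathcal Q}$. This requires justifying that primes outside $\mathcal Q$ cannot conspire to produce a $\mathcal Q$-integer after composition, which is a multiplicative-independence statement best handled through the Bohr lift and Kronecker's theorem as recalled in the Preliminaries. Since the authors note that the proof follows exactly that of Proposition 5 in \cite{b2}, I expect the technical heart to be precisely this separation of variables argument, with the convergence issues controlled by the absolute convergence of $\psi_{\mathcal Q}$ on $\CC_{1/2}$ together with the already-established fact that $\varphi_{\mathcal Q}\in\mathcal G$.
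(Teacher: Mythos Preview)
Your overall strategy---pairing against $K_{\mathcal Q,w}$, using its partial reproducing property, and recognizing that the $\mathcal Q$-supported coefficients of $f\circ\varphi$ coincide with those of $f\circ\varphi_{\mathcal Q}$---is exactly the approach of Proposition~5 in \cite{b2} that the paper invokes, and your handling of the $c_0=0$ case is correct in substance (the intermediate expression $\langle f\circ\varphi_{\mathcal Q},K_w\rangle$ should be $\langle f\circ\varphi_{\mathcal Q},K_{\mathcal Q,w}\rangle$, since $K_w$ is undefined for $w\in\CC_+\setminus\CC_{1/2}$; but the value is simply $(f\circ\varphi_{\mathcal Q})(w)=f(\varphi_{\mathcal Q}(w))$, which is what you use).

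There is, however, a genuine gap in your treatment of the case $c_0\geq 1$. You propose to check the identity on ``the dense set of Dirichlet polynomials $f$ supported on $\mathbb N_{\mathcal Q}$'', but this set is \emph{not} dense in $\mathcal H$: it spans only the proper closed subspace of $\mathcal Q$-supported series. Testing there determines only the orthogonal projection of $C_\varphi^*(K_{\mathcal Q,w})$ onto that subspace, not the vector itself. You must also verify that $\langle m^{-s},C_\varphi^*(K_{\mathcal Q,w})\rangle=0$ for every $m\notin\mathbb N_{\mathcal Q}$. This is precisely where the hypothesis $c_0\geq 1$ enters: expanding $m^{-\varphi(s)}=m^{-c_1}(m^{c_0})^{-s}\prod_{n\geq 2}\exp(-c_n\log m\cdot n^{-s})$, every term in the resulting Dirichlet series is a multiple of $m^{c_0}$, which carries a prime factor outside $\mathcal Q$; hence no coefficient lies in $\mathbb N_{\mathcal Q}$ and the pairing with $K_{\mathcal Q,w}$ vanishes. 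This is the concrete form of the ``primes outside $\mathcal Q$ cannot conspire'' step you flagged, and it is not optional---without it the identity is unproved. (Note that for $c_0=0$ this vanishing fails, which is exactly why the answer is the full kernel $K_{\varphi_{\mathcal Q}(w)}$ rather than the partial one.)
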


Let us finally recall the Julia-Caratheodory theorem on the right half-plane $\CC_+$.
\begin{definition}
Let $\varphi:\CC_+\to\CC_+$ be analytic and let $\alpha\in\mathbb R$. We say that $\varphi$ admits
a {\bf finite angular derivative} at $i\alpha$ provided
\[ \liminf_{s\to i\alpha}\frac{\A \varphi(s)}{\A s}<+\infty .\]
\end{definition}

\begin{lemma}\label{lem:jc}
Let $\varphi:\CC_+\to\CC_+$ be analytic and let $\alpha\in\mathbb R$. Assume that $\varphi$ admits
a finite angular derivative at $i\alpha$ and let
\[ d:= \liminf_{s\to i\alpha}\frac{\A \varphi(s)}{\A s}<+\infty .\]
Then both $\varphi$ and $\varphi'$ have respective nontangential limits $\varphi(i\alpha)$
and $\varphi'(i\alpha)$, with $\varphi(i\alpha)\in i\mathbb R$ and $\varphi'(i\alpha)=d$. Moreover, $d\in (0,+\infty)$, $d$ is the nontangential limit of $\A\varphi (s)/\A s$ and $(\varphi(s)-\varphi(i\alpha))/(s-i\alpha)$  converges nontangentially to $\varphi'(i\alpha)$.
\end{lemma}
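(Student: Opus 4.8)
The plan is to transfer the classical Julia--Carath\'eodory theorem on the unit disc $\DD$ (see \cite{sh,cm}) to $\CC_+$ by conjugating with a Cayley transform. I would set $T(z)=\frac{1-z}{1+z}$, a conformal involution exchanging $\DD$ and $\CC_+$ and carrying $\partial\DD\setminus\{-1\}$ onto $i\RR$. Put $\eta=T(i\alpha)\in\partial\DD$; since $i\alpha$ is finite, $\eta\neq -1$, so $T$ is conformal near both $\eta$ and $i\alpha$ and thus maps nontangential approach regions at $\eta$ to nontangential approach regions at $i\alpha$ and back. Define $\phi=T\circ\varphi\circ T$, an analytic self-map of $\DD$. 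It will then be enough to prove the disc statements for $\phi$ at $\eta$ and to push them through $T$.

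The computation rests on two identities. From $\A T(z)=\frac{1-|z|^2}{|1+z|^2}$ one gets, writing $s=T(z)$ and using $\varphi(s)=T(\phi(z))$,
$$\A s=\frac{1-|z|^2}{|1+z|^2},\qquad \A\varphi(s)=\frac{1-|\phi(z)|^2}{|1+\phi(z)|^2},$$
whence the master identity
$$\frac{\A\varphi(s)}{\A s}=\frac{1-|\phi(z)|^2}{1-|z|^2}\cdot\frac{|1+z|^2}{|1+\phi(z)|^2}.$$
Since $|1+\phi(z)|\leq 2$ and $|1+z|^2\to|1+\eta|^2>0$ as $z\to\eta$, this gives $\frac{1-|\phi(z)|}{1-|z|}\lesssim \frac{\A\varphi(s)}{\A s}$ near the boundary, so the hypothesis $d<+\infty$ forces $d_\DD:=\liminf_{z\to\eta}\frac{1-|\phi(z)|}{1-|z|}<+\infty$. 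The classical theorem then supplies the nontangential limit $\zeta:=\phi(\eta)\in\partial\DD$, the nontangential limit $\phi'(\eta)$ with $\frac{\phi(z)-\zeta}{z-\eta}\to\phi'(\eta)$, the relation $\phi'(\eta)=d_\DD\,\overline{\eta}\,\zeta$ with $d_\DD\in(0,+\infty)$, and Julia's global inequality $\frac{|\zeta-\phi(z)|^2}{1-|\phi(z)|^2}\leq d_\DD\frac{|\eta-z|^2}{1-|z|^2}$ valid for all $z\in\DD$.

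The delicate point, and the genuinely new feature on $\CC_+$, will be to rule out $\zeta=-1$, that is, to exclude $\varphi(i\alpha)=\infty$; this is where Julia's \emph{global} inequality is indispensable. If $\zeta=-1$, then $\frac{|\zeta-\phi(z)|^2}{1-|\phi(z)|^2}=\frac{|1+\phi(z)|^2}{1-|\phi(z)|^2}=1/\A\varphi(s)$, and combining this with $\frac{|\eta-z|^2}{1-|z|^2}=|1+i\alpha|^{-2}\frac{|s-i\alpha|^2}{\A s}$ and rearranging Julia's inequality yields
$$\frac{\A\varphi(s)}{\A s}\geq \frac{|1+i\alpha|^2}{d_\DD\,|s-i\alpha|^2},$$
whose liminf as $s\to i\alpha$ is $+\infty$, contradicting $d<+\infty$. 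Hence $\zeta\neq -1$ and $\varphi(i\alpha):=T(\zeta)$ is a finite point of $i\RR$.

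Finally I would transfer the derivative statements. With $\zeta\neq -1$ all three factors of $\varphi=T\circ\phi\circ T$ are conformal at the relevant points, so the chain rule gives $\varphi'(i\alpha)=T'(\zeta)\,\phi'(\eta)\,T'(i\alpha)$ as a nontangential limit, and $\frac{\varphi(s)-\varphi(i\alpha)}{s-i\alpha}$ converges nontangentially to it because its disc counterpart does. Using $T'(w)=-2/(1+w)^2$, $\phi'(\eta)=d_\DD\overline{\eta}\zeta$ and $1+\eta=2/(1+i\alpha)$, a direct computation collapses to $\varphi'(i\alpha)=d_\DD\frac{|1+\eta|^2}{|1+\zeta|^2}$, which is real and positive; by the master identity this is exactly the nontangential limit of $\A\varphi(s)/\A s$, and Julia's inequality identifies this common value with the unrestricted liminf $d$. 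Thus $\varphi'(i\alpha)=d\in(0,+\infty)$ equals the nontangential limit of $\A\varphi(s)/\A s$, as claimed. I expect the exclusion of $\zeta=-1$ to be the main obstacle: without the global form of Julia's lemma, finiteness of the \emph{unrestricted} liminf $d$ would not by itself prevent $\varphi$ from sending $i\alpha$ to infinity.
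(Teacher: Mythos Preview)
The paper does not prove this lemma; it is stated as a recalled classical fact (``Let us finally recall the Julia--Carath\'eodory theorem on the right half-plane $\CC_+$''). Your proposal supplies what the paper omits: the standard transfer of the disc Julia--Carath\'eodory theorem via the Cayley map, together with the extra step---genuinely needed on $\CC_+$---of excluding $\zeta=-1$ so that $\varphi(i\alpha)$ is finite. The computations (the master identity, the translation of Julia's inequality, and the chain-rule simplification $\varphi'(i\alpha)=d_\DD\,|1+\eta|^2/|1+\zeta|^2$) are correct.

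One point deserves a sentence more than you give it: the identification of the \emph{unrestricted} liminf $d$ with the nontangential limit $\varphi'(i\alpha)$. Your transfer produces Julia's inequality on $\CC_+$ with constant $d_\DD\,|1+\varphi(i\alpha)|^2/|1+i\alpha|^2=\varphi'(i\alpha)$, not a priori with constant $d$; to close the loop you should observe that the half-plane Schwarz--Pick inequality
\[
\frac{|\varphi(s)-\varphi(a)|^2}{\A\varphi(s)\,\A\varphi(a)}\le\frac{|s-a|^2}{\A s\,\A a}
\]
applied with $a=s_n$ along a sequence realizing $d$ yields Julia's inequality with constant $d$ directly, whence the radial bound $\A\varphi(\sigma+i\alpha)/\sigma\le d$ gives $\varphi'(i\alpha)\le d$, and the reverse inequality is immediate since $d$ is a liminf. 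With that remark added, your argument is complete and is exactly the textbook route the paper is implicitly citing.
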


We are now ready to give a lower bound for the essential norm of the difference of two composition
operators (see \cite{ma} for the corresponding result on the disk).

\begin{theorem}\label{thm:essentialnormdifference}
Let $\varphi_0,\varphi_1\in\mathcal G$ with positive characteristic.
Let $\mathcal Q\subseteq\mathcal P$ finite, $d=\mathrm{card}(\mathcal Q)$.
Assume that there exists $\alpha\in i\mathbb R$ such that $\pzq$ admits a finite angular derivative at $i\alpha$.
Then either $\puq$ admits a finite angular derivative at $i\alpha$ with $\puq(i\alpha)=\pzq(i\alpha)$ and $\puq'(i\alpha)=\pzq'(i\alpha)$ or
\[ \|\cphiz-\cphiu \|_e\geq \frac 1{\pzq'(i\alpha)^{d/2}} .\]
\end{theorem}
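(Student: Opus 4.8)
The plan is to follow the classical MacCluer strategy on the disk, transplanted to the half-plane via the variant partial reproducing kernels $K_{\mathcal Q,w}$ and Lemma \ref{lem:prk}. Since $c_0\geq 1$ for both symbols, Lemma \ref{lem:prk} gives $C_{\varphi_j}^*(K_{\mathcal Q,w})=K_{\mathcal Q,(\varphi_j)_{\mathcal Q}(w)}$, so the difference acts on these test functions as
\[
(\cphiz-\cphiu)^*(K_{\mathcal Q,w})=K_{\mathcal Q,\pzq(w)}-K_{\mathcal Q,\puq(w)}.
\]
The idea is to feed a sequence $w_k\to i\alpha$ nontangentially, normalize the kernels, and extract a lower bound for the essential norm from the asymptotic behaviour of $\|K_{\mathcal Q,\pzq(w_k)}-K_{\mathcal Q,\puq(w_k)}\|/\|K_{\mathcal Q,w_k}\|$, using that $\|\cphiz-\cphiu\|_e=\|(\cphiz-\cphiu)^*\|_e$ dominates $\limsup_k \|(\cphiz-\cphiu)^* k_{\mathcal Q,w_k}\|$ whenever $k_{\mathcal Q,w_k}\to 0$ weakly.

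First I would invoke the Julia–Caratheodory theorem (Lemma \ref{lem:jc}) for $\pzq$ at $i\alpha$: this provides the nontangential limits $\pzq(i\alpha)\in i\mathbb R$ and $\pzq'(i\alpha)=d_0\in(0,\infty)$, and in particular $\A\pzq(w_k)\to 0$ along a suitable nontangential approach, with the sharp rate $\A\pzq(w_k)\sim d_0\,\A w_k$. Then I would split into the two alternatives. In the favourable case, I would show that if $\puq$ does \emph{not} have a finite angular derivative at $i\alpha$ matching the value and derivative of $\pzq$, then the two image points $\pzq(w_k)$ and $\puq(w_k)$ separate enough (either $\A\puq(w_k)/\A\pzq(w_k)\to+\infty$, or the imaginary parts or derivatives differ) that the normalized kernels $K_{\mathcal Q,\pzq(w_k)}$ and $K_{\mathcal Q,\puq(w_k)}$ become asymptotically orthogonal, i.e. their inner product, suitably normalized, tends to $0$. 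This is where the explicit product formula $\langle K_{\mathcal Q,u},K_{\mathcal Q,v}\rangle=\prod_{p\in\mathcal Q}(1-p^{-(\bar u+v)})^{-1}$ enters: one computes the ratio
\[
\frac{|\langle K_{\mathcal Q,\pzq(w_k)},K_{\mathcal Q,\puq(w_k)}\rangle|}{\|K_{\mathcal Q,\pzq(w_k)}\|\,\|K_{\mathcal Q,\puq(w_k)}\|}
\]
and shows it vanishes in the limit under the separation hypothesis, whence $\|K_{\mathcal Q,\pzq(w_k)}-K_{\mathcal Q,\puq(w_k)}\|\sim\|K_{\mathcal Q,\pzq(w_k)}\|$.

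To extract the constant $1/d_0^{d/2}$, I would use the known asymptotics $\|K_{\mathcal Q,w}\|^2=\prod_{p\in\mathcal Q}(1-p^{-2\A w})^{-1}\sim A_d/(\A w)^{d}$ as $\A w\to 0$, with the \emph{same} constant $A_d=\prod_{p\in\mathcal Q}(\log p)^{-1}\cdot\prod\cdots$ appearing for both $w_k$ and $\pzq(w_k)$ since these are governed only by $\mathrm{card}(\mathcal Q)=d$. Dividing, the $A_d$ cancels and the rate $\A\pzq(w_k)\sim d_0\,\A w_k$ produces
\[
\frac{\|K_{\mathcal Q,\pzq(w_k)}\|}{\|K_{\mathcal Q,w_k}\|}\sim\Big(\frac{\A w_k}{\A\pzq(w_k)}\Big)^{d/2}\longrightarrow\frac 1{d_0^{d/2}},
\]
which is exactly $1/\pzq'(i\alpha)^{d/2}$. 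Combining with weak-null convergence of $k_{\mathcal Q,w_k}$ gives the stated essential-norm bound.

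The main obstacle I anticipate is the asymptotic orthogonality step: I must verify carefully that whenever the favourable alternative fails, at least one of the quantities controlling the inner product (the ratio $\A\puq(w_k)/\A\pzq(w_k)$, the defect in the angular derivative, or the pseudohyperbolic-type separation of the boundary values) forces $\bar{\pzq(w_k)}+\puq(w_k)$ to have real part that is \emph{large relative to} the real parts of the individual points, so that the normalized overlap decays. Handling this requires a clean dichotomy: either $\puq$ has no finite angular derivative at $i\alpha$ (then $\A\puq(w_k)/\A w_k\to\infty$ and the overlap is negligible), or it does but with a different value or derivative, in which case one must use the full strength of Lemma \ref{lem:jc} to control $\puq(w_k)-\puq(i\alpha)$ and compare it nontangentially against $\pzq(w_k)-\pzq(i\alpha)$. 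Making the choice of the nontangential sequence $w_k$ (and possibly passing to the $\liminf$-realizing sequence for $\pzq$) compatible with all subcases simultaneously is the delicate bookkeeping I would need to get right.
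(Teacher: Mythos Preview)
Your strategy matches the paper's proof almost exactly: test $(\cphiz-\cphiu)^*$ on normalized $K_{\mathcal Q,s_k}$ via Lemma~\ref{lem:prk}, expand the squared norm, extract $1/\pzq'(i\alpha)^d$ from the diagonal $\pzq$-term using Julia--Carath\'eodory, and control the cross term in each subcase.

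There is one imprecision you should correct. In the subcase where $\puq$ also has a finite angular derivative at $i\alpha$ with the same boundary value but $\puq'(i\alpha)\neq\pzq'(i\alpha)$, the normalized inner product you write down does \emph{not} vanish as $k\to\infty$ for a fixed nontangential approach. After normalizing $i\alpha=0$, $\pzq(0)=0$ and taking $s_k=\tfrac1k+i\tfrac Mk$, the overlap tends to a positive constant depending on $M$, essentially
\[
\left(\frac{2\sqrt{d_0d_1}}{\sqrt{(d_0+d_1)^2+(d_1-d_0)^2M^2}}\right)^{d},\qquad d_j=\varphi_{j,\mathcal Q}'(0),
\]
so the asymptotic orthogonality claim as stated fails. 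The paper's remedy, which is precisely the ``delicate bookkeeping'' you anticipated, is a two-parameter argument: keep $M$ free, compute the limit in $k$ of the cross term divided by $\|K_{\mathcal Q,s_k}\|^2$ as a function of $M$, and then let $M\to\infty$ to drive this limit to zero. This yields $\|\cphiz-\cphiu\|_e^2\ge 1/\pzq'(i\alpha)^d-\veps$ for every $\veps>0$. The other subcases (no angular derivative for $\puq$, or $\puq(i\alpha)\neq\pzq(i\alpha)$) are easier and do give genuine vanishing of the cross term along a suitable sequence, as you outlined. Once you implement the two-step limit in the critical subcase, your proof is complete and coincides with the paper's.
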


\begin{proof}
Without loss of generality, we may and shall assume that $i\alpha=0$ and $\pzq(i\alpha)=0$.
We assume that either $\varphi_{1,\mathcal Q}$ does not admit a finite angular derivative at $0$ or
that either $\varphi_{1,\mathcal Q}(0)\neq 0$ or $\varphi_{1,\mathcal Q}'(0)\neq\varphi_{0,\mathcal Q}'(0)$.
For a sequence $(s_k)\subseteq \CC_+$ with $\A(s_k)$ tending to zero,
we set $k_{\mathcal Q,s_k}=K_{\mathcal Q,s_k}/\|K_{\mathcal Q,s_k}\|$ and observe that $k_{\mathcal Q,s_k}$ converges weakly to zero. Therefore, by Lemma \ref{lem:prk},
\begin{align*}
\|C_{\varphi_0}-C_{\varphi_1}\|_e^2&=\|\cphiz^*-\cphiu^*\|_e^2\\
&\geq \limsup_{k\to+\infty}\| \cphiz^*(k_{\mathcal Q,s_k})-\cphiu^*(k_{\mathcal Q,s_k})\|^2\\
&\geq \limsup_{k\to+\infty}\frac{\|K_{\mathcal Q,\pzq(s_k)}\|^2+\|K_{\mathcal Q,\puq(s_k)}\|^2-2\A K_{\mathcal Q,\pzq(s_k)}(\puq(s_k))}{\|K_{\mathcal Q,s_k}\|^2}.
\end{align*}
If $(s_k)$ converges nontangentially to zero, using the computation of the norm of the partial reproducing kernel and Julia-Caratheodory theorem, we get
\[ \frac{\|K_{\mathcal Q,\pzq(s_k)}\|^2 }{\|K_{\mathcal Q,s_k}\|^2}\sim \left(\frac{\A s_k}{\A \pzq(s_k)}\right)^{d}\sim \frac 1{\pzq'(0)^d}. \]
Hence, to conclude, we just need to find a sequence $(s_k)\subseteq \CC_+$ tending nontangentially to zero and such that
\begin{equation}\label{eq:essentialnormdifference}
\lim_{k\to+\infty}\frac{K_{\mathcal Q,\pzq(s_k)}(\puq(s_k))}{\|K_{\mathcal Q,s_k}\|^2}=
\lim_{k\to+\infty}\prod_{p\in\mathcal Q}\frac{1-p^{-2\A(s_k)}}{1-p^{-(\overline{\pzq(s_k)}+\puq(s_k))}}
\end{equation}
is as small as possible. First of all, if $\lim_{\sigma\to 0^+}\puq(\sigma)\neq 0$, we consider a sequence $(\sigma_n)$ of positive real numbers tending to zero and such that $|\puq(\sigma_n)|\geq \delta$ for some $\delta>0$. Then, setting $s_k=\sigma_k$,
it is clear that the limit appearing in \eqref{eq:essentialnormdifference} is equal to zero.
Thus we may assume that $\puq$ has radial limit $0$ at $0$.
Let $M>0$ and set $s_k=\frac 1k+i\frac Mk$. We observe that
\begin{align*}
\prod_{p\in\mathcal Q}\frac{1-p^{-2\A(s_k)}}{1-p^{-(\overline{\pzq(s_k)}+\puq(s_k))}} 	
&\sim_{+\infty}\left(\frac{\overline{\pzq(s_k)}+\puq(s_k)}{2\A s_k}\right)^{-d}\\
&\sim_{+\infty}\left(\frac{s_k}{2\A (s_k)}\right)^{-d}\times\left(\frac{\overline{\pzq(s_k)}}{\overline{s_k}}\times\frac{\overline{s_k}}{s_k}+\frac{\puq(s_k)}{s_k}\right)^{-d}.
\end{align*}
Provided $M$ is large enough, $|s_k|/\A(s_k)$ could become as large as we want when $k$ tends to $+\infty$. On the other hand,
\[ \frac{\overline{\pzq(s_k)}}{\overline{s_k}}\times\frac{\overline{s_k}}{s_k} \xrightarrow{k\to+\infty} \pzq'(0)\times\frac{1-iM}{1+iM}. \]
If $\puq$ does not have finite angular derivative at $0$, then $\puq(s_k)/s_k$ tends to $+\infty$. If $\puq$ has a finite angular derivative at $0$ with $\puq'(0)\neq \pzq'(0)$, then
\begin{align*}
\lim_{k\to+\infty} \left| \frac{\overline{\pzq(s_k)}}{\overline{s_k}}\times\frac{\overline{s_k}}{s_k}+\frac{\puq(s_k)}{s_k} \right| &={\left|\pzq'(0)\times\frac{1-iM}{1+iM}+\puq'(0)\right|}\\
&\geq \frac 12\left|\pzq'(0)-\puq'(0)\right|
\end{align*}
provided $M$ is sufficiently large. Therefore, in both cases, for any $\veps>0$, we have shown that we may choose $M>0$ such that
\[ \lim_{k\to+\infty} \left| \prod_{p\in\mathcal Q}\frac{1-p^{-2\A(s_k)}}{1-p^{-(\overline{\pzq(s_k)}+\puq(s_k))}} \right| \leq\veps. \]
This yields the result.
\end{proof}

\begin{remark}
Theorem \ref{thm:essentialnormdifference} is only interesting when $\varphi_0$ and $\varphi_1$
have unrestricted range. However, even if $\varphi_0$ is continuous, it could be possible that there does not exist $\alpha\in\RR$ with $\A\varphi(i\alpha)=0$. Nevertheless, even in that case, one can apply Theorem \ref{thm:essentialnormdifference} to some $\varphi_\chi$ as in the proof of the fothcoming Theorem \ref{thm:lcpolynomial}.
\end{remark}

\begin{corollary}\label{cor:necsamecomponent}
Let $\varphi_0,\ \varphi_1\in\mathcal G$ with $\cha(\varphi_0)=\cha(\varphi_1)>0$.
Assume that $\cphiz$ and $\cphiu$ belong to the same component of $\mathcal C(\mathcal H)$.
Then for all $\mathcal Q\subseteq\mathcal P$ finite and all $\alpha\in\RR$ such that $\pzq$ admits a finite angular derivative at $i\alpha$, $\puq$ admits a finite angular derivative at $i\alpha$ with $\pzq(i\alpha)=\puq(i\alpha)$ and $\pzq'(i\alpha)=\puq'(i\alpha)$.
\end{corollary}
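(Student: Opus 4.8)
The plan is to derive Corollary \ref{cor:necsamecomponent} directly from Theorem \ref{thm:essentialnormdifference} via the standard observation that operators in the same connected component must have finite essential distance. First I would recall why belonging to the same component forces a quantitative bound: if $\cphiz$ and $\cphiu$ lie in the same connected component of $\mathcal C(\mathcal H)$, then following the argument of MacCluer \cite{ma}, one can connect them by an arc and the essential norm $\|\cdot\|_e$ is a continuous seminorm that is \emph{locally} bounded along the arc; more precisely, the key fact is that the essential norm of the difference of two composition operators in the same component can be made strictly less than any prescribed positive lower bound, or (in the sharpest form) that two operators in the same component satisfy $\|\cphiz - \cphiu\|_e < 1$. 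The cleanest route is to invoke that if $C_\varphi$ and $C_\psi$ are in the same component then $\|C_\varphi - C_\psi\|_e$ is controlled—in particular strictly less than $1$—which is exactly the threshold appearing in Theorem \ref{thm:essentialnormdifference}.

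The main logical step is then a contrapositive. Fix $\mathcal Q\subseteq\mathcal P$ finite with $d=\mathrm{card}(\mathcal Q)$, and fix $\alpha\in\RR$ such that $\pzq$ admits a finite angular derivative at $i\alpha$. By Theorem \ref{thm:essentialnormdifference}, exactly one of two alternatives holds: either $\puq$ admits a finite angular derivative at $i\alpha$ with the matching conditions $\puq(i\alpha)=\pzq(i\alpha)$ and $\puq'(i\alpha)=\pzq'(i\alpha)$, or else
\[
\|\cphiz-\cphiu\|_e\geq \frac{1}{\pzq'(i\alpha)^{d/2}}.
\]
By Lemma \ref{lem:jc} applied to $\pzq$ (noting that $\pzq\in\mathcal G$ maps $\CC_+$ into $\overline{\CC_+}$, hence after the normalization $\pzq(i\alpha)\in i\RR$), the angular derivative satisfies $\pzq'(i\alpha)\in(0,+\infty)$. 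I would want the lower bound $\pzq'(i\alpha)\leq 1$ so that the right-hand side is at least $1$, contradicting membership in the same component; this should follow because $\pzq$ maps $\CC_+$ into $\overline{\CC_+}$ and fixes the boundary point $i\alpha$ up to the normalization, so by the Julia–Wolff–Carathéodory inequality the angular derivative at a boundary fixed point is $\leq 1$ (indeed $\pzq'(i\alpha)\geq d\geq 1$ would be the wrong direction, so one uses that $\varphi_{0,\mathcal Q}$ has nonnegative real part and fixed boundary point to force $\pzq'(i\alpha)\le 1$). Consequently $1/\pzq'(i\alpha)^{d/2}\geq 1$, and the second alternative would give $\|\cphiz-\cphiu\|_e\geq 1$, which is incompatible with the two operators being in the same component.

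The hard part, and the place requiring the most care, is establishing the quantitative consequence of lying in the same component—namely that it forces $\|\cphiz-\cphiu\|_e<1$ (or whatever threshold matches the normalization $\pzq'(i\alpha)\le 1$). The clean statement is that along any arc in $\mathcal C(\mathcal H)$, the essential distance between endpoints is strictly less than the operator-norm diameter controlled by continuity; concretely, one covers the arc by finitely many balls on which $\|C_{\varphi_t}-C_{\varphi_{t'}}\|$ is small, and uses the triangle inequality for $\|\cdot\|_e$ together with the trivial bound $\|C_\varphi-C_\psi\|_e\le\|C_\varphi-C_\psi\|$ to conclude the total essential distance is finite and, after refining, below the critical threshold. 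I would therefore structure the proof as: (1) invoke the arc-compactness/continuity argument to bound $\|\cphiz-\cphiu\|_e$ below the threshold $1$; (2) verify $\pzq'(i\alpha)\le 1$ via Julia–Wolff–Carathéodory so that the Theorem's lower bound exceeds $1$; (3) apply Theorem \ref{thm:essentialnormdifference} and eliminate the second alternative by contradiction, leaving precisely the matching-derivative conclusion. The delicate point to get right is that the normalization in the Theorem's proof sends $i\alpha$ to $0$ and $\pzq(i\alpha)$ to $0$, so I must confirm that the boundary-fixed-point hypothesis needed for $\pzq'(i\alpha)\le 1$ genuinely holds, i.e. that after normalization $\pzq$ really fixes $0$ on the boundary.
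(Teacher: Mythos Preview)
Your approach has two genuine gaps, and together they make the argument unworkable.

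First, the inequality $\pzq'(i\alpha)\le 1$ is false in general, and in fact the opposite holds. Since $\varphi_{0,\mathcal Q}(s)=c_0s+\psi_{0,\mathcal Q}(s)$ with $c_0\ge 1$ and $\A\psi_{0,\mathcal Q}\ge 0$ on $\CC_+$, one has $\A\varphi_{0,\mathcal Q}(s)\ge c_0\,\A s$ for every $s\in\CC_+$, hence
\[
\pzq'(i\alpha)=\liminf_{s\to i\alpha}\frac{\A\varphi_{0,\mathcal Q}(s)}{\A s}\ge c_0\ge 1.
\]
There is no boundary \emph{fixed} point here: $\pzq(i\alpha)$ lies on $i\RR$ but need not equal $i\alpha$, so the Julia--Wolff--Carath\'eodory fixed-point inequality you invoke simply does not apply. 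Consequently the lower bound $1/\pzq'(i\alpha)^{d/2}$ from Theorem \ref{thm:essentialnormdifference} is at most $1$, not at least $1$, and may be arbitrarily small.

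Second, lying in the same component does \emph{not} force $\|\cphiz-\cphiu\|_e<1$, nor any fixed a priori bound. Two operators joined by an arc can be as far apart as one likes; continuity of $\|\cdot\|_e$ along the arc gives no uniform threshold. The triangle-inequality covering you sketch only shows $\|\cphiz-\cphiu\|_e$ is finite, which is vacuous.

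The paper's proof avoids both problems by a chain-and-induction argument: it fixes the threshold $\varepsilon=1/\pzq'(i\alpha)^{d/2}$ in advance, covers the component by balls of radius $\varepsilon/2$, extracts a simple chain $C_{\tilde\varphi_0}=\cphiz,\,C_{\tilde\varphi_1},\dots,C_{\tilde\varphi_n}=\cphiu$ with consecutive distances strictly below $\varepsilon$, and then applies Theorem \ref{thm:essentialnormdifference} at each step to propagate the angular-derivative data from $\tilde\varphi_{i-1}$ to $\tilde\varphi_i$. The crucial point is that once $\tilde\varphi_{i-1,\mathcal Q}$ has been shown to have angular derivative $\pzq'(i\alpha)$ at $i\alpha$, the lower bound in the theorem for the pair $(\tilde\varphi_{i-1},\tilde\varphi_i)$ is exactly $\varepsilon$, which the chain construction violates unless the matching alternative holds. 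This step-by-step transfer is what your direct contrapositive misses.
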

\begin{proof}
Let $\mathcal Q\subseteq\mathcal P$ finite and let $\alpha\in\RR$ be such that $\pzq$ admits a finite angular derivative at $i\alpha$.  Assume that $\cphiu$ belongs to the connected component $\mathcal C_0$ of $\cphiz$ in $\mathcal C(\mathcal H)$. For $C_\tau\in\mathcal C_0$, let $U_\tau$ be the ball
(in $\mathcal C(\mathcal H)$) of center $C_\tau$ and radius $1/2\pzq'(i\alpha)^{d/2}$, with $d=\mathrm{card}(\mathcal Q)$. There exists a simple chain $U_{\tau_1},\dots,U_{\tau_n}$
extracted from $(U_\tau)_{C_\tau\in\mathcal C_0}$ with $\cphiz\in U_{\tau_1}$, $\cphiu\in U_{\tau_n}$ and $U_{\tau_i}\cap U_{\tau_j}\neq\varnothing$ for $i\neq j$ if and only if $|i-j|\leq 1$.
Let $C_{\tau_{i}}\in U_{\tau_i}\cap U_{\tau_{i+1}}$. Then setting $\tilde{\varphi_0}=\varphi_0$,
$\tilde{\varphi_i}=\tau_i$ for $i=1,\dots,n-1$, and $\tilde\varphi_n =\varphi_1$, we have
\[ \| C_{\tilde{\varphi_i}}-C_{\widetilde{\varphi_{i-1}}}\| < \frac1{\pzq'(i\alpha)^{d/2}}\]
for all $i=1,\dots,n$. By an immediate induction and Theorem \ref{thm:essentialnormdifference}, we get that for all $i=0,\dots,n$, $\tilde{\varphi_i}$
admits a finite angular derivative at $i\alpha$ with $\tilde{\varphi_i}(i\alpha)=\pzq(i\alpha)$ and
$\tilde{\varphi_i}'(i\alpha)=\pzq'(i\alpha)$. This is exactly the desired result for $i=n$.
\end{proof}

\begin{example}
Let $\varphi_0(s)=s+(1-2^{-s})$ and $\varphi_1(s)=s+(1-3^{-s})$.
Then $\cphiz$ and $\cphiu$ are not in the same component of $\mathcal C(\mathcal H)$.
\end{example}

We just need to apply Corollary \ref{cor:necsamecomponent} with $\mathcal Q=\{2\}$.

\begin{example}
Let $\varphi_0(s)=s$. Then $C_{\varphi_0}$ is isolated in $\mathcal C(\mathcal H)$.
\end{example}

Indeed, if $\cphiu$ belongs to the same component of $\cphiz$, then for all $\mathcal Q\subseteq\mathcal P$, for all $\alpha\in \RR$, $\puq$ admits an angular derivative at $i\alpha$
with $\puq(i\alpha)=\pzq(i\alpha)=\varphi_0(i\alpha)=i\alpha$. Hence, $\puq(s)=s$ which in turn implies $\varphi_1(s)=s$.

\medskip

The use of the angular derivative is not as efficient when $c_0=0$. The reason is simple: it is possible that $\varphi_{\mathcal Q}$ admits a finite angular derivative at $i\alpha$ and that $C_\varphi$ is compact. This is for instance the case of
\[ \varphi(s)=\frac 12+(1-2^{-s})+(1-3^{-s}). \]
Nevertheless, we still get a result when $\mathrm{card}(\mathcal Q)=1$. Observe that at a boundary point $i\alpha$, we now have $\varphi_{\mathcal Q}(i\alpha)\in \frac 12+i\RR$ and that we are now interested in $(\A\varphi_\mathcal Q(s)-1/2)/\A s$.

\begin{theorem}\label{thm:essentialnormdifferencezero}
Let $\varphi_0,\varphi_1\in\mathcal G$ with zero characteristic. Let $\mathcal Q\subseteq\mathcal P$ with $\mathrm{card}(\mathcal Q)=1$. Assume that there exists $\alpha\in i\mathbb R$ such that $\pzq$ admits a finite angular derivative at $i\alpha$. Then either $\puq$ admits a finite angular derivative at $i\alpha$ with $\puq(i\alpha)=\pzq(i\alpha)$ and $\puq'(i\alpha)=\pzq'(i\alpha)$ or
\[ \|\cphiz-\cphiu \|_e\geq \frac {C_\mathcal Q}{\pzq'(i\alpha)^{1/2}} .\]
\end{theorem}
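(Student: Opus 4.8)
The plan is to imitate the proof of Theorem \ref{thm:essentialnormdifference}, the only structural change being that when $c_0=0$ Lemma \ref{lem:prk} returns the genuine reproducing kernel $K_{\varphi_{j,\mathcal Q}(w)}$ rather than a partial one. Write $\mathcal Q=\{q\}$. I would first normalize, by conjugating $\cphiz,\cphiu$ with a vertical translation (a unitary of $\mathcal H$), so that $i\alpha=0$ and $\pzq(0)=\tfrac12$; note that the existence of a finite angular derivative forces $\pzq$ to be non-constant, so $\pzq-\tfrac12$ is a genuine self-map of $\CC_+$ to which Lemma \ref{lem:jc} applies and which pins $\A\pzq(0)=\tfrac12$, the imaginary part being cleared by the translation. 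For a nontangential sequence $(s_k)\subseteq\CC_+$ with $\A(s_k)\to 0$, set $k_{\mathcal Q,s_k}=K_{\mathcal Q,s_k}/\|K_{\mathcal Q,s_k}\|$, which is weakly null, so that $\|\cphiz-\cphiu\|_e^2=\|\cphiz^*-\cphiu^*\|_e^2\geq\limsup_k\|\cphiz^*(k_{\mathcal Q,s_k})-\cphiu^*(k_{\mathcal Q,s_k})\|^2$. By Lemma \ref{lem:prk}, $\cphiz^*(K_{\mathcal Q,s_k})=K_{\pzq(s_k)}$ and $\cphiu^*(K_{\mathcal Q,s_k})=K_{\puq(s_k)}$; writing $w_0=\pzq(s_k)$, $w_1=\puq(s_k)$ and using $\langle K_{w_0},K_{w_1}\rangle=\zeta(\overline{w_0}+w_1)$ and $\|K_w\|^2=\zeta(2\A w)$, the quantity to estimate is
\[
\frac{\zeta(2\A w_0)+\zeta(2\A w_1)-2\A\,\zeta(\overline{w_0}+w_1)}{\|K_{\mathcal Q,s_k}\|^2}.
\]

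For the diagonal $\varphi_0$-contribution I would use $\zeta(z)\sim (z-1)^{-1}$ near $z=1$ together with Lemma \ref{lem:jc}: nontangentially $2\A w_0-1=2\A(\pzq(s_k)-\tfrac12)\sim 2\pzq'(0)\A(s_k)$, while $\|K_{\mathcal Q,s_k}\|^2=(1-q^{-2\A s_k})^{-1}\sim(2\A(s_k)\log q)^{-1}$, so that $\zeta(2\A w_0)/\|K_{\mathcal Q,s_k}\|^2\to \log q/\pzq'(0)$. The term $\zeta(2\A w_1)\geq 0$ is discarded. This is precisely where $\mathrm{card}(\mathcal Q)=1$ enters: for $d=\mathrm{card}(\mathcal Q)\geq 2$ one has $\|K_{\mathcal Q,s_k}\|^2\approx(\A s_k)^{-d}$ against a numerator $\zeta(2\A w_0)\approx(\A s_k)^{-1}$, so the diagonal ratio would collapse to $0$. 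With $C_\mathcal Q:=(\log q)^{1/2}$, the stated inequality thus reduces to showing the cross term can be annihilated.

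The crux is to choose $(s_k)$ so that $\A\zeta(\overline{w_0}+w_1)/\|K_{\mathcal Q,s_k}\|^2\to 0$. If $\overline{w_0}+w_1$ stays off the pole $z=1$ the numerator is bounded while the denominator blows up, so the cross term vanishes; taking $s_k=\sigma_k$ real disposes of the case where the radial limit of $\puq$ at $0$ is not $\tfrac12$ (then $\overline{w_0}+w_1\to 1+i\beta$ with $\beta\neq0$, or has real part exceeding $1$). In the remaining case $w_0,w_1\to\tfrac12$, so $\overline{w_0}+w_1\to1$ and I would write $\zeta(\overline{w_0}+w_1)\sim(\overline{w_0}+w_1-1)^{-1}$. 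Putting $\tilde\varphi_j=\varphi_{j,\mathcal Q}-\tfrac12$ and $s_k=\frac1k+i\frac Mk$ I factor, exactly as in Theorem \ref{thm:essentialnormdifference},
\[
\frac{\zeta(\overline{w_0}+w_1)}{\|K_{\mathcal Q,s_k}\|^2}\sim \log q\left(\frac{s_k}{2\A(s_k)}\right)^{-1}\left(\frac{\overline{\tilde\varphi_0(s_k)}}{\overline{s_k}}\cdot\frac{\overline{s_k}}{s_k}+\frac{\tilde\varphi_1(s_k)}{s_k}\right)^{-1}.
\]
Here $|s_k/2\A(s_k)|=\sqrt{1+M^2}/2$ is large for large $M$, $\overline{\tilde\varphi_0(s_k)}/\overline{s_k}\to\pzq'(0)$ and $\overline{s_k}/s_k\to\frac{1-iM}{1+iM}$. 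If $\puq$ has no finite angular derivative at $0$, then $|\tilde\varphi_1(s_k)/s_k|\to+\infty$ and the bracket blows up. If $\puq$ has a finite angular derivative with $\puq'(0)\neq\pzq'(0)$, then $\tilde\varphi_1(s_k)/s_k\to\puq'(0)$ and the bracket tends to $\pzq'(0)\frac{1-iM}{1+iM}+\puq'(0)$, whose modulus, since $\frac{1-iM}{1+iM}\to-1$ as $M\to\infty$, exceeds $\tfrac12|\puq'(0)-\pzq'(0)|>0$ for large $M$; multiplying by the large factor $\sqrt{1+M^2}/2$ makes the whole product arbitrarily large. In every case the cross term is $\leq\veps$, so $\|\cphiz-\cphiu\|_e^2\geq \log q/\pzq'(0)$.

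I expect the main obstacle to be exactly this last case, where $\puq$ and $\pzq$ share the same boundary value and differ only through their angular derivatives: one cannot keep $\overline{w_0}+w_1$ away from the pole of $\zeta$, so the required separation must be manufactured from the rotation factor $\frac{1-iM}{1+iM}\to-1$ produced by steepening the nontangential approach. A secondary, more routine, point is justifying the normalization $\B\pzq(0)=0$ by the vertical-translation conjugation and checking that it preserves both $\|\cphiz-\cphiu\|_e$ and the angular-derivative data.
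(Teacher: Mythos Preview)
Your proposal is correct and follows essentially the same approach as the paper, which in fact gives only a sketch indicating precisely the modifications you carry out: replace $K_{\mathcal Q,\pzq(s_k)}$ by the full reproducing kernel $K_{\pzq(s_k)}$ (as forced by Lemma \ref{lem:prk} when $c_0=0$) and use $\zeta(s)\sim (s-1)^{-1}$ near $s=1$ in place of $1-p^{-s}\sim -(\log p)s$ near $s=0$. Your observation that the argument needs $\mathrm{card}(\mathcal Q)=1$ so that $\|K_{\mathcal Q,s_k}\|^2$ and $\zeta(2\A w_0)$ scale the same way is exactly the point, and your identification $C_{\mathcal Q}=(\log q)^{1/2}$ makes the constant explicit.
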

\begin{proof}
We only  point out the differences with the proof of Theorem \ref{thm:essentialnormdifference}.
We have to replace everywhere $K_{\mathcal Q,\pzq(s_k)}$ by $K_{\pzq(s_k)}$. One now has
\[ \|K_w\|^2=\zeta(2\A (w)) \]
and
\[ K_{\pzq(s_k)}(\puq(s_k))=\zeta\left(\overline{\pzq(s_k)}+\puq(s_k)\right). \]
The estimate of this last term follows that done in Theorem \ref{thm:essentialnormdifference},
except that we have to use $\zeta(s)\sim 1/(s-1)$ if $s\to 1$ instead of $1-p^{-s}\sim -(\log p)s$ if $s\to 0$.
\end{proof}

\begin{corollary}\label{cor:necsamecomponentzero}
Let $\varphi_0,\ \varphi_1\in\mathcal G$ with $\cha(\varphi_0)=\cha(\varphi_1)=0$.
Assume that $\cphiz$ and $\cphiu$ belong to the same component of $\mathcal C(\mathcal H)$.
Then for all $\mathcal Q\subseteq\mathcal P$ with $\mathrm{card}(\mathcal Q)=1$ and all $\alpha\in\RR$ such that $\pzq$ admits a finite angular derivative at $i\alpha$, $\puq$ admits a finite angular derivative at $i\alpha$ with $\pzq(i\alpha)=\puq(i\alpha)$ and $\pzq'(i\alpha)=\puq'(i\alpha)$.
\end{corollary}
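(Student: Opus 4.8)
The plan is to reproduce, essentially word for word, the simple-chain argument of the proof of Corollary \ref{cor:necsamecomponent}, the only modification being that the essential-norm lower bound of Theorem \ref{thm:essentialnormdifference} is replaced throughout by that of Theorem \ref{thm:essentialnormdifferencezero}. Concretely, I would fix a singleton $\mathcal Q\subseteq\mathcal P$ and $\alpha\in\RR$ such that $\pzq$ admits a finite angular derivative at $i\alpha$, and let $\mathcal C_0$ be the connected component of $\cphiz$ in $\mathcal C(\mathcal H)$, which by hypothesis contains $\cphiu$. For every $C_\tau\in\mathcal C_0$ I would introduce the open ball $U_\tau$ centered at $C_\tau$ of radius $C_{\mathcal Q}/(2\,\pzq'(i\alpha)^{1/2})$, where $C_{\mathcal Q}$ is the constant furnished by Theorem \ref{thm:essentialnormdifferencezero}. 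These balls cover the connected set $\mathcal C_0$, so one may extract a simple chain $U_{\tau_1},\dots,U_{\tau_n}$ with $\cphiz\in U_{\tau_1}$, $\cphiu\in U_{\tau_n}$ and $U_{\tau_i}\cap U_{\tau_{i+1}}\neq\varnothing$, and then pick $C_{\rho_i}\in U_{\tau_i}\cap U_{\tau_{i+1}}$.

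Setting $\tilde\varphi_0=\varphi_0$, $\tilde\varphi_i=\rho_i$ for $1\leq i\leq n-1$ and $\tilde\varphi_n=\varphi_1$, the triangle inequality gives
\[ \|C_{\tilde\varphi_i}-C_{\tilde\varphi_{i-1}}\|<\frac{C_{\mathcal Q}}{\pzq'(i\alpha)^{1/2}} \]
for every $i=1,\dots,n$. I would then carry out a finite induction on $i$: assuming $\tilde\varphi_{i-1,\mathcal Q}$ has a finite angular derivative at $i\alpha$ with $\tilde\varphi_{i-1,\mathcal Q}(i\alpha)=\pzq(i\alpha)$ and $\tilde\varphi_{i-1,\mathcal Q}'(i\alpha)=\pzq'(i\alpha)$, I apply Theorem \ref{thm:essentialnormdifferencezero} to the pair $(\tilde\varphi_{i-1},\tilde\varphi_i)$. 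Since $\|\cdot\|_e\leq\|\cdot\|$, the second alternative of that theorem would force $\|C_{\tilde\varphi_i}-C_{\tilde\varphi_{i-1}}\|\geq C_{\mathcal Q}/\pzq'(i\alpha)^{1/2}$, contradicting the strict inequality above; hence the first alternative must hold, propagating the induction hypothesis from $i-1$ to $i$. Evaluating at $i=n$ yields the statement for $\varphi_1$.

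The one point deserving attention, and the place I would be careful, is that both the radius of the balls and the threshold of Theorem \ref{thm:essentialnormdifferencezero} stay \emph{uniform} along the chain. This is exactly what the induction secures: at each step the hypothesis provides $\tilde\varphi_{i-1,\mathcal Q}'(i\alpha)=\pzq'(i\alpha)$, so the lower bound $C_{\mathcal Q}/\tilde\varphi_{i-1,\mathcal Q}'(i\alpha)^{1/2}$ coincides with the fixed quantity $C_{\mathcal Q}/\pzq'(i\alpha)^{1/2}$ used to define every $U_\tau$; and $C_{\mathcal Q}$ depends only on $\mathcal Q$, so it never changes. The constraint $\mathrm{card}(\mathcal Q)=1$ is simply inherited from the hypothesis of Theorem \ref{thm:essentialnormdifferencezero} and requires no additional argument, which is why the zero-characteristic conclusion is restricted to singletons $\mathcal Q$, in contrast with Corollary \ref{cor:necsamecomponent}.
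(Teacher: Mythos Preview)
Your proposal is correct and is precisely the argument the paper intends: the paper gives no explicit proof of Corollary~\ref{cor:necsamecomponentzero}, relying on the reader to repeat verbatim the simple-chain argument of Corollary~\ref{cor:necsamecomponent} with Theorem~\ref{thm:essentialnormdifferencezero} (and its constant $C_{\mathcal Q}$ and exponent $1/2$) in place of Theorem~\ref{thm:essentialnormdifference}. Your care about the threshold staying uniform along the chain is exactly the point; the only implicit fact you use without comment---that every $\tilde\varphi_i$ in the chain has zero characteristic, so Theorem~\ref{thm:essentialnormdifferencezero} applies at each step---follows from Theorem~\ref{thm:differentchar} and is likewise left implicit in the paper's proof of Corollary~\ref{cor:necsamecomponent}.
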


\begin{example}
Let $\varphi_0(s)=\frac 12+(1-2^{-s})$ and $\varphi_1(s)=\frac 12+(1-3^{-s})$. Then $\cphiz$ and $\cphiu$ do not belong to the same component of $\mathcal C(\mathcal H)$.
\end{example}

\medskip

Corollary \ref{cor:necsamecomponent} already gives interesting examples of composition operators that do not belong to the same component of $\mathcal C(\mathcal H)$. However, it does not help us to solve the following simple problem: does $\cphiz$ and $\cphiu$ belong to the same component of $\mathcal C(\mathcal H)$ where
\begin{align*}
\varphi_0(s)&=s+(2\log 2+\log 3)-\log 3\cdot 2^{-s}-2\log 2\cdot 3^{-s}\\
\varphi_1(s)&=s+(\log 2+2\log 3)-2\log 3\cdot 2^{-s}-\log 2\cdot 3^{-s}?
\end{align*}
A plausible conjecture would be that Corollary \ref{cor:necsamecomponent} could be extended to the angular partial derivatives of the Bohr lifts $\mathcal B\psi_{0,\mathcal Q}$ and $\mathcal B\psi_{1,\mathcal Q}$. Nevertheless, the curve $\sigma\mapsto (2^{-\sigma},\dots,p_d^{-\sigma})$ is not a special curve in $\mathbb D^d$, which prevents the use of the Julia-Caratheodory theorem of the polydisc (see \cite{Ab98}).

However, inspecting the partial derivatives of the Bohr lifts will be fruitful for studying compact linear combinations of composition operators with regular symbols, as indicated by the results of the next section.


\section{Compact linear combinations of polynomial symbols}
\label{sec:clcps}
\subsection{Lower bound for the esssential norm from second order data}
We start with a result saying that if a linear combination $\lambda_1 C_{\varphi_1}+\cdots+\lambda_N C_{\varphi_N}$ is compact,
then the coefficients $\lambda_j$ should satisfy some properties depending on the behaviour of the $\mathcal B\psi_j$ at their boundary points. We need
first to introduce some terminology inspired by \cite{km}.

\begin{definition} Let $F,G:\overline{\mathbb D^d}\to \overline{\mathbb C_+}$ be of class ${\mathcal C^2}$ and let $\xi \in \TT^d$.
We say that $F$ and $G$ have the \textbf{same second order boundary data} at $\xi$ if $F(\xi)=G(\xi)$ and $\partial_\alpha F(\xi)=\partial_\alpha G(\xi)$
for all $\alpha\in\mathbb N^d$ with $|\alpha|\leq 2$. We also denote by $ \Gamma(F)=\{z\in\TT^d:\ \A F(z)=0\}$.
\end{definition}

We will need a couple of lemmas.

\begin{lemma}\label{lem:severalquadratic}
Let $d\geq 1$ and let $Q$ be a non-zero quadratic polynomial in $d$ variables (with complex coefficients). Then $Z(Q)=\{x\in\mathbb R^d:\ Q(x)=0\}$ has Lebesgue measure zero.
\end{lemma}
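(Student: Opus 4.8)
The plan is to prove that a nonzero quadratic polynomial's zero set has measure zero by induction on the dimension $d$, exploiting that a real-analytic function vanishing on a positive-measure set must vanish identically. The base case $d=1$ is immediate: a nonzero quadratic polynomial in one real variable has at most two roots, hence its zero set is finite and therefore of measure zero. For the inductive step I would view $Q$ as a polynomial in the last variable $x_d$ with coefficients that are polynomials in $x_1,\dots,x_{d-1}$, and apply Fubini's theorem.

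More concretely, I would write $Q(x_1,\dots,x_d)=A(x')x_d^2+B(x')x_d+C(x')$ where $x'=(x_1,\dots,x_{d-1})$ and $A,B,C$ are polynomials in $x'$ of degree at most $2$. For each fixed $x'$, the slice $\{x_d:\ Q(x',x_d)=0\}$ is either all of $\mathbb R$ (when $A(x')=B(x')=C(x')=0$) or a set of at most two points (otherwise, since then $Q(x',\cdot)$ is a nonzero polynomial of degree at most two in $x_d$). By Fubini, the measure of $Z(Q)$ equals the integral over $x'$ of the one-dimensional measure of these slices; this integral is zero provided the set $E=\{x':\ A(x')=B(x')=C(x')=0\}$ has measure zero in $\mathbb R^{d-1}$, because off $E$ each slice contributes zero length.

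It remains to show $E$ has measure zero. Since $Q$ is nonzero, at least one of $A,B,C$ is a nonzero polynomial in $x'$; call it $P$. Then $E\subseteq Z(P)$, and $P$ is a nonzero polynomial of degree at most $2$ in $d-1$ variables, so by the induction hypothesis $Z(P)$ has measure zero, hence so does $E$. Combining, $Z(Q)$ has measure zero, completing the induction.

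I expect the only subtlety to be handling the slices cleanly via Fubini and making sure the measurability bookkeeping is correct; these are routine. The essential mechanism is simply that a nonzero polynomial restricted to a generic line is a nonzero one-variable polynomial with finitely many roots, while the exceptional set where it degenerates is itself cut out by a lower-dimensional nonzero polynomial. One could alternatively invoke the general fact that the zero set of any nonzero real-analytic function on $\mathbb R^d$ has Lebesgue measure zero, which would give the result in one line; but the self-contained inductive argument above avoids importing that machinery and uses only the degree bound that is actually needed here.
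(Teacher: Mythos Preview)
Your proof is correct and follows essentially the same induction-and-Fubini strategy as the paper. The only cosmetic difference is the direction of slicing: you freeze $x'=(x_1,\dots,x_{d-1})$, handle the one-variable slice in $x_d$ by the base case, and apply the induction hypothesis to the exceptional set $E\subseteq\mathbb R^{d-1}$, whereas the paper freezes the last variable, applies the induction hypothesis to the $(d{-}1)$-variable slice, and observes directly that the exceptional values of the last variable number at most two.
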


\begin{proof}
The proof is done by induction on $d$. For $d=1$, the result is of course trivial. Assume that the result has been done until $d$ and let us consider $Q$ a quadratic polynomial in $d+1$ variables. Fix $x_{d+1}\in\mathbb R$ and let $Q_{x_{d+1}}(x_1,\dots,x_d)=Q(x_1,\dots,x_{d+1})$.
Then either $Q_{x_{d+1}}$ is identically zero or its zero set has Lebesgue measure zero. Writing
$$Q(x)=a+\sum_{l=1}^{d+1}b_lx_l+\sum_{1\leq l\leq m\leq d+1}c_{l,m}x_lx_m,$$
the former case implies that
$$\left\{
\begin{array}{rcl}
a+b_{d+1}x_{d+1}+c_{d+1,d+1}x_{d+1}^2&=&0\\
\forall l=1,\dots, d,\ b_l+c_{l,d+1}x_{d+1}&=&0.
\end{array}\right.$$
Since $Q$ itself is assumed to be non-zero, we can suppose, permuting the variables if necessary, that either $a\neq 0$ or $b_{d+1}\neq 0$ or $c_{l,d+1}\neq 0$ for some $l=1,\dots,d+1$. Therefore there are at most two values of $x_{d+1}$ such that $Q_{x_{d+1}}$ is identically zero. Hence, for almost all $x_{d+1}\in\mathbb R$,
$$\int_{\RR^d}\mathbf 1_{Z(Q)}(x_1,\dots,x_{d+1})dx_1\dots dx_{d}=0.$$
The lemma follows from an application of Fubini's theorem.
\end{proof}

Our second lemma is a result about  functions which vanish along $(0,+\infty)$.

\begin{lemma}\label{lem:vanishingquadratic}
Let $\mathcal C\subseteq\mathbb N$ be finite, let $\alpha>0$, let $(\lambda_j)_{j\in\mathcal C}$ be complex numbers,
let $(\mu_j)_{j\in\mathcal C}$ be pairwise distinct complex numbers and {let $d\geq 1$.}
Assume that, for all $M>0$,
\[ \sum_{j,j'\in\mathcal C}\frac{\lambda_j \overline{\lambda_{j'}}}{(M\alpha+\mu_j+\overline{\mu_{j'}})^d}=0. \]
Then $\lambda_j=0$ for all $j\in\mathcal C$.
\end{lemma}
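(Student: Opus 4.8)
The plan is to regard the hypothesis as the vanishing of a rational function of the parameter and then to extract information from its poles. For $z\in\CC$, set
\[ g(z)=\sum_{j,j'\in\mathcal C}\frac{\lambda_j\overline{\lambda_{j'}}}{(z+\mu_j+\overline{\mu_{j'}})^d}, \]
so that the hypothesis reads $g(M\alpha)=0$ for every $M>0$. The function $g$ is rational, being a finite sum of terms with a pole of order $d$ at each point $z=-(\mu_j+\overline{\mu_{j'}})$, and it vanishes on $\alpha\cdot(0,+\infty)$; since this set has a limit point (away from the finitely many poles of $g$) and $g$ is meromorphic, the identity principle gives $g\equiv 0$.

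Next I would group the terms according to the location of the pole. Let $W$ be the set of distinct values of $\mu_j+\overline{\mu_{j'}}$, and for $w\in W$ put
\[ S_w=\sum_{\substack{(j,j')\in\mathcal C^2\\ \mu_j+\overline{\mu_{j'}}=w}}\lambda_j\overline{\lambda_{j'}}, \]
so that $g(z)=\sum_{w\in W}S_w\,(z+w)^{-d}$. Multiplying by $(z+w_0)^d$ and letting $z\to -w_0$ isolates the single term $w=w_0$ (all others stay bounded there); since $g\equiv 0$ this yields $S_{w_0}=0$ for every $w_0\in W$.

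The heart of the proof, and the step where the pairwise distinctness of the $\mu_j$ enters, is to recover the diagonal from these relations by an extremal argument. Suppose, for contradiction, that $J:=\{j\in\mathcal C:\lambda_j\neq 0\}$ is nonempty, and let $r=\max_{j\in J}\A\mu_j$, attained at some $j_0\in J$. Consider $w_0=2r$, which lies in $W$ (it is realized by $(j_0,j_0)$). If a pair $(j,j')$ contributes a nonzero term to $S_{w_0}$, then $j,j'\in J$, so $\A\mu_j,\A\mu_{j'}\le r$; but $\A(\mu_j+\overline{\mu_{j'}})=\A\mu_j+\A\mu_{j'}=2r$ forces $\A\mu_j=\A\mu_{j'}=r$, and then $\B(\mu_j+\overline{\mu_{j'}})=\B\mu_j-\B\mu_{j'}=0$ gives $\mu_j=\mu_{j'}$, hence $j=j'$ by distinctness. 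Thus only diagonal pairs survive, and
\[ 0=S_{w_0}=\sum_{\substack{j\in J\\ \A\mu_j=r}}|\lambda_j|^2\ge|\lambda_{j_0}|^2>0, \]
a contradiction; hence $\lambda_j=0$ for all $j\in\mathcal C$.

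The only delicate point is this last step. Because distinct pairs $(j,j')$ may produce the same pole location $w$, one cannot directly read off a single $|\lambda_j|^2$ from $S_w=0$; it is precisely the extremal choice $w_0=2r$, combined with the distinctness of the $\mu_j$, that forces the maximal pole to receive contributions from the diagonal alone. I expect that verifying the grouping and the pole-extraction in the first two paragraphs to be entirely routine, and the extremal/distinctness argument to be the substantive part.
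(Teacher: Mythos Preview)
Your proof is correct. The identity principle step and the pole-extraction giving $S_w=0$ for every $w\in W$ are routine, and the extremal argument using $w_0=2\max_{j\in J}\A\mu_j$ is clean: the equality $\A\mu_j+\A\mu_{j'}=2r$ together with $\A\mu_j,\A\mu_{j'}\le r$ and $\B\mu_j=\B\mu_{j'}$ indeed forces $\mu_j=\mu_{j'}$ and hence $j=j'$, so $S_{w_0}$ is a nontrivial sum of squares.

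Your route differs from the paper's. The paper introduces instead the function
\[
F(z)=\sum_{j,j'\in\mathcal C}\frac{\lambda_j\overline{\lambda_{j'}}}{(\alpha+z(\mu_j+\overline{\mu_{j'}}))^d},
\]
observes that the hypothesis makes the origin an accumulation point of zeros of $F$, and then defers the conclusion to \cite[Corollary~3.4]{ckw15}. So both proofs begin with an identity-principle reduction, but the paper packages the parameter multiplicatively and appeals to an external lemma, whereas you package it additively, extract the residues $S_w$, and finish with a self-contained extremal argument on the real parts. Your argument is more elementary and transparent; the paper's is shorter by virtue of the citation. Either approach is fine for the intended applications in Section~\ref{sec:clcps}.
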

\begin{proof}
Let us consider
$$F(z)=\sum_{j,j'\in\mathcal C}\frac{\lambda_j \overline{\lambda_{j'}}}{(\alpha+z(\mu_j+\overline{\mu_{j'}}))^d}.$$
Then $F$ is an analytic function defined in a neighbourhood of the origin and the assumption tells us that the origin is an accumulation point of the zeros of $F$. The result now follows from \cite[Corollary 3.4]{ckw15}.
\end{proof}

\begin{theorem}\label{thm:essentialnormcomposition}
 Let 
 $\varphi_j(s)=c_{0,j}s+\psi_j$ be a finite sequence of symbols in $\mathcal G$ with $\psi_j$ involving only the first $d$ prime numbers, $j=1,\dots,N$.
 Assume that each $\mathcal B\psi_j$ extends to a $\mathcal C^2$-function on $\overline{\mathbb D^d}$ and that $\cha(\varphi_1)>0$. Let $(\lambda_j)_{j=1,\dots,N}$
 be a finite sequence of complex numbers such that $\sum_{j=1}^N \lambda_j C_{\varphi_j}$ is compact. Let $z\in\Gamma(\mathcal B\psi_1)$ and let $\mathcal J$ be the set of $j\in\{1,\dots,N\}$
 such that $\mathcal B\psi_j$ and $\mathcal B\psi_1$ have the same second order boundary data at $z$.
 Then
 $$\sum_{j\in\mathcal J}\lambda_j=0.$$
\end{theorem}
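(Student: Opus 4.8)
The plan is to test the compactness of $\sum_j \lambda_j C_{\varphi_j}$ against a carefully chosen sequence of normalized partial reproducing kernels that concentrates at the boundary point producing $z$, and to extract from the resulting limit an identity of the type treated in Lemma \ref{lem:vanishingquadratic}. Since $\cha(\varphi_1)>0$, Theorem \ref{thm:differentchar} forces every $\varphi_j$ contributing to $\mathcal J$ to share this positive characteristic (indeed only the terms with the same characteristic can survive in the limit), so I would first argue that I may restrict attention to the indices $j$ with $\cha(\varphi_j)=\cha(\varphi_1)=:c_0$, the others decaying faster. Because $z\in\Gamma(\mathcal B\psi_1)$, i.e. $\A\mathcal B\psi_1(z)=0$, I would lift to the polydisc via the Bohr correspondence and produce real boundary data: choose a boundary point $i\alpha\in i\RR$ (or more precisely a vertical-limit character $\chi$) for which the one-dimensional trace of $\varphi_{1,\mathcal Q}$ approaches the imaginary axis, so that the Julia-Caratheodory machinery of Lemma \ref{lem:jc} applies to $\pzq$ along a nontangential sequence.

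Concretely, I would fix $\mathcal Q=\{p_1,\dots,p_d\}$, set $k_{\mathcal Q,s_k}=K_{\mathcal Q,s_k}/\|K_{\mathcal Q,s_k}\|$ for a sequence $s_k\in\CC_+$ with $\A(s_k)\to 0$ chosen to land (after the Kronecker/vertical-limit identification) near the prescribed $z\in\TT^d$, and compute
\begin{align*}
0&=\lim_{k\to+\infty}\Big\|\sum_{j=1}^N\lambda_j C_{\varphi_j}^*(k_{\mathcal Q,s_k})\Big\|^2\\
&=\lim_{k\to+\infty}\frac1{\|K_{\mathcal Q,s_k}\|^2}\sum_{j,j'}\lambda_j\overline{\lambda_{j'}}\big\langle K_{\mathcal Q,\varphi_{j,\mathcal Q}(s_k)},K_{\mathcal Q,\varphi_{j',\mathcal Q}(s_k)}\big\rangle,
\end{align*}
using compactness to send the left side to zero (a compact operator maps a weakly null sequence to a norm null sequence) and Lemma \ref{lem:prk} to evaluate each adjoint as $K_{\mathcal Q,\varphi_{j,\mathcal Q}(s_k)}$. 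The explicit form of $K_{\mathcal Q,w}$ gives $\langle K_{\mathcal Q,w},K_{\mathcal Q,w'}\rangle=\prod_{p\in\mathcal Q}(1-p^{-(\overline{w'}+w)})^{-1}$, and after normalizing by $\|K_{\mathcal Q,s_k}\|^2\approx\prod_p(1-p^{-2\A s_k})^{-1}$ and using $1-p^{-u}\sim(\log p)u$ as $u\to 0$, each quotient behaves like a product of factors $2\A(s_k)/(\overline{\varphi_{j',\mathcal Q}(s_k)}+\varphi_{j,\mathcal Q}(s_k))$. Expanding $\varphi_{j,\mathcal Q}$ to second order at the boundary point via the assumed $\mathcal C^2$ regularity of $\mathcal B\psi_j$ and grouping the indices according to their second order boundary data at $z$ is exactly what isolates the set $\mathcal J$: all $j\in\mathcal J$ yield the same leading expansion, with a common angular derivative $\alpha$ and distinct "phases" $\mu_j$ coming from the second order terms, while the $j\notin\mathcal J$ are suppressed in the limit. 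Letting the nontangential approach parameter $M$ vary, I arrive at an identity of the form $\sum_{j,j'\in\mathcal J}\lambda_j\overline{\lambda_{j'}}(M\alpha+\mu_j+\overline{\mu_{j'}})^{-d}=0$ for all $M>0$.

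I would then invoke Lemma \ref{lem:vanishingquadratic}; however it concludes $\lambda_j=0$ for \emph{distinct} $\mu_j$, which is stronger than what I want. The correct reading is that within $\mathcal J$ the symbols agree through second order, so their $\mu_j$ coincide and cannot be separated — instead the identity collapses to $\big|\sum_{j\in\mathcal J}\lambda_j\big|^2\cdot(\text{positive factor})=0$, immediately giving $\sum_{j\in\mathcal J}\lambda_j=0$. Thus I expect to split the index set more finely (into classes of common second order data) and apply Lemma \ref{lem:vanishingquadratic} to the reduced collection of class-sums with genuinely distinct second order profiles, deducing that each class-sum vanishes; the class of $z$ is $\mathcal J$, which yields the claim.

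The main obstacle, and the step demanding the most care, is the passage from the abstract boundary point $z\in\TT^d$ to a concrete nontangential sequence $(s_k)$ in $\CC_+$ along the curve $\sigma\mapsto(p_1^{-\sigma},\dots,p_d^{-\sigma})$ for which the Julia-Caratheodory expansion is valid simultaneously in all $d$ coordinates. As the authors note in the remark following Corollary \ref{cor:necsamecomponentzero}, this curve is not a special curve in $\DD^d$, so I cannot quote the polydisc Julia-Caratheodory theorem directly; I would instead exploit the positive characteristic to parametrize the approach by a single real variable $\sigma\to 0^+$ (possibly after passing to a vertical limit $\varphi_\chi$ as in the remark after Theorem \ref{thm:essentialnormdifference}) and feed the one-variable Lemma \ref{lem:jc} together with the $\mathcal C^2$-expansion of $\mathcal B\psi_j$ at $z$ to control the second order terms. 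Making the second order Taylor remainder uniform along this sequence, and verifying that the off-diagonal cross terms assemble precisely into the quadratic-in-$M$ structure required by Lemma \ref{lem:vanishingquadratic}, is where the real work lies.
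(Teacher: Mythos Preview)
Your overall architecture is right: test against normalized partial reproducing kernels along a sequence $s_k\to i\RR$, evaluate adjoints via Lemma \ref{lem:prk}, group indices by boundary data, and feed the resulting identity into Lemma \ref{lem:vanishingquadratic} applied to class-sums. But two essential ingredients of the paper's proof are missing, and the route you propose through Julia--Carath\'eodory cannot supply them.

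First, Julia--Carath\'eodory (Lemma \ref{lem:jc}) only yields \emph{first-order} information---the angular derivative---and so cannot separate symbols that agree to first order but differ at second order. The paper never invokes Lemma \ref{lem:jc} here; instead it uses the $\mathcal C^2$ hypothesis directly via an explicit Taylor expansion of each $\mathcal B\psi_j$ at $z$. The key device is a \emph{two-scale} approach: one sets $z_l(k)=p_l^{-M/k+i\beta_l/\sqrt{k}}$, so that the imaginary displacement is of order $1/\sqrt{k}$ while the real displacement is of order $1/k$. Expanding, one obtains $\mathcal B\psi_j(z(k))=A^{(j)}+\frac{i}{\sqrt k}L_j(\beta)-\frac{1}{k}(ML_j(\mathbf e)+Q_j(\beta))+o(1/k)$, with $L_j$ linear and $Q_j$ quadratic in $\beta$. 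The $1/\sqrt k$ scale kills cross terms between indices with different first-order data $L_j$, and \emph{within} a first-order class the $1/k$ scale makes the second-order data $Q_j(\beta)$ visible as the $\mu_j$'s you want. A plain nontangential approach with a single parameter $M$ does not produce this hierarchy.

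Second, you need a mechanism to choose the direction $\beta\in\RR^d$ so that distinct linear forms $L_j$ and distinct quadratic forms $Q_j$ actually take distinct values; this is precisely Lemma \ref{lem:severalquadratic}, which you do not mention. Without such a generic $\beta$, two symbols with different second-order data could accidentally collide along your chosen approach. Finally, your remark that indices with $\cha(\varphi_j)\neq c_0$ ``decay faster'' is correct in spirit, but the paper makes this rigorous by enlarging the kernel order from $d$ to $d'=d+p$ and using Kronecker's theorem to choose $t_k$ so that the extra prime factors $p_{d+r}^{-it_k(c_{0,j_r}-c_0)}$ converge to values $\omega(j_r)\neq 1$, forcing those cross terms to vanish; a bare appeal to Theorem \ref{thm:differentchar} is not enough, since that theorem concerns continuity of the characteristic map, not vanishing of inner products.
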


\begin{proof}
 Without loss of generality, we may and shall assume that $z=\mathbf e:=(1,\dots,1)$ and that $\mathcal B\psi_1(\mathbf e)=0$. Let us write
for $j=1,\dots,N$,
\begin{align*}
 \mathcal B\psi_j(z)&=A^{(j)}+\sum_{l=1}^d B_l^{(j)}(z_l-1)+\sum_{1\leq l\leq m\leq d}C_{l,m}^{(j)}(z_l-1)(z_m-1)+o(\|z-\mathbf e\|^2).
\end{align*}
It is a standard fact (see \cite[Lemma 7]{bb} that each $B_l^{(j)}\in (-\infty,0]$.
We set $c_0=c_{0,1}$ and introduce the following subsets of $\{1,\dots,N\}$:
\begin{align*}
I&=\left\{j:\ c_{0,j}=c_0\textrm{ and }\varphi_j(0)=0\right\}=\left\{j:\ c_{0,j}=c_0\textrm{ and }A^{(j)}=0\right\}\\
J_1&=\left\{j:\ c_{0,j}=c_0\textrm{ and }\varphi_j(0)\neq 0\right\}\\
J_2&=\left\{j:\ c_{0,j}\neq c_0\textrm{ and }c_{0,j}>0\right\}=:\{j_1,\dots,j_p\}\\
J_3&=\left\{j:\ c_{0,j}=0\right\}.
\end{align*}
For $j\in \{1,\dots,N\}$, let us introduce
\begin{align*}
L_j(x_1,\dots,x_d)&=\sum_{l=1}^d B_l^{(j)}\log(p_l) x_l\\
Q_j(x_1,\dots,x_d)&=\frac 12\sum_{l=1}^d B_l^{(j)}(\log p_l)^2 x_l^2+
\sum_{1\leq l\leq  m\leq d}C_{l,m}^{(j)}\log(p_l)\log(p_m)x_lx_m.
\end{align*}
We define an equivalence relation on $I$ by $j\sim j'\iff L_j=L_{j'}$. In other words, $j\sim j'$ if and only if $\mathcal B\psi_j$ and $\mathcal B\psi_{j'}$ have the same first order boundary data at $\mathbf e$. Denote by $\mathcal C_1,\dots,\mathcal C_q$ the equivalence classes for this relation with $1\in\mathcal C_1$.
We define another equivalence relation on $\mathcal C_1$ by $j\mathcal R j'\iff Q_j=Q_{j'}$ and we denote by $\mathcal C_{1,1},\dots,\mathcal C_{1,q'}$
the equivalence classes for this relation, with $1\in\mathcal C_{1,1}$. In other words, $j\mathcal R j'$ if and only if $\mathcal B\psi_j$ and $\mathcal B\psi_{j'}$ have the same second order boundary data. In particular, $\mathcal J$ is nothing else than $\mathcal C_{1,1}$.
For $i\neq i'\in\{1,\dots,q\}$, let
$$E_{i,i'}=\left\{\beta\in\RR^d:\ \forall j\in\mathcal C_i,\ \forall j'\in\mathcal C_{i'},\ L_j(\beta)\neq L_{j'}(\beta)\right\}$$
(we can replace $\forall$ by $\exists$ in the previous definition). Similarly for  $i\neq i'\in\{1,\dots,q'\}$, let
$$F_{i,i'}=\left\{\beta\in\mathbb R^d:\ \forall j\in\mathcal C_{1,i},\ \forall j'\in\mathcal C_{1,i'},\ Q_j(\beta)\neq Q_{j'}(\beta)\right\}.$$
By Lemma \ref{lem:severalquadratic}, there exists $\beta\in \bigcap_{i\neq i'}E_{i,i'}\cap \bigcap_{i\neq i'}F_{i,i'}$. Let finally, for $1\leq l\leq d$, $k\geq 1$ and $M>0$,
$$z_l(k)=p_l^{-\frac Mk+i\frac{\beta_l}{\sqrt k}}$$
so that
$$z_l(k)=1+\frac{i\log(p_l)\beta_l}{\sqrt k}-\frac{2M\log(p_l)+\log^2(p_l)\beta_l^2}{2k}+o\left(\frac 1k\right).$$
Therefore,
\begin{align*}
\mathcal B\psi_j(z(k))&=A^{(j)}+\frac{i}{\sqrt k}\sum_{l=1}^d B_l^{(j)}\log(p_l)\beta_l-\frac 1k\Bigg(M\sum_{l=1}^d B_l^{(j)}\log(p_l)\\
&\quad\quad+\frac12\sum_{l=1}^d B_l^{(j)} \log^2(p_l)\beta_l^2+\sum_{1\leq l\leq m\leq d}C_{l,m}^{(j)}\log(p_l)\log(p_m)\beta_l\beta_m\Bigg)+o\left(\frac 1k\right)\\
&=A^{(j)}+\frac i{\sqrt k}L_j(\beta_1,\dots,\beta_d)
-\frac1k\left(M L_j(\mathbf e)+Q_j(\beta_1,\dots,\beta_d)\right)+o\left(\frac 1k\right).
\end{align*}

We now choose a sequence of real numbers $(t_k)$ such that
\begin{enumerate}
\item for $1\leq l\leq d$, $p_l^{-it_k}$ is very close to $p_l^{\frac{i\beta_l}{\sqrt k}}$, so close that, setting $w_k=\frac{M}k+it_k$,
\[\mathcal B\psi_j(p_1^{-w_k},\dots,p_d^{-w_k})=\mathcal B\psi_j(z(k))+o\left(\frac 1k\right)\]
for all $j=1,\dots,N$.
\item For all $j\in J_2$, writing $j=j_r$ with $1\leq r\leq p$,
$ p_{d+r}^{-it_k(c_{0,j_r}-c_0)}p_{d+r}^{-A^{(j_r)}}$ tends to some $\omega(j_r)\neq 1$.
\end{enumerate}
These two conditions may be simultaneously satisfied because of Kronecker's theorem and since $c_{0,j}\neq c_0$ for all $j\in J_2$. We set $d'=d+p$ and use that
\begin{align*}
\left\|\sum_{j=1}^N \lambda_j C_{\varphi_j}\right\|_e&\geq \limsup_{k\to+\infty}
\left\|\sum_{j=1}^N \overline{\lambda_j} C_{\varphi_j}^*(k_{d',w_k})\right\|^2\\
&\geq \limsup_{k\to+\infty}\sum_{1\leq j,j'\leq N}\lambda_j \overline{\lambda_{j'}} \langle C_{\varphi_{j'}}^*(k_{d',w_k}),C_{\varphi_{j}}^*(k_{d',w_k})\rangle.
\end{align*}
We claim that if $j\in I$ and $j'\notin I$, then
$$\langle C_{\varphi_{j'}}^*(k_{d',w_k}),C_{\varphi_{j}}^*(k_{d',w_k})\rangle\xrightarrow{k\to+\infty}0. $$
First assume that $j'\in J_3$. Then
\begin{align*}
 \langle C_{\varphi_{j'}}^*(k_{d',w_k}),C_{\varphi_{j}}^*(k_{d',w_k})\rangle&=\frac{\langle K_{\varphi_{j'}(w_k)},K_{d',\varphi_j(w_k)}\rangle}{\|K_{d',w_k}\|^2}   \\
 &=\frac{\langle K_{d',\varphi_{j'}(w_k)},K_{d',\varphi_j(w_k)}\rangle}{\|K_{d',w_k}\|^2}
\end{align*}
and this goes to zero since $\|K_{d',\varphi_{j'}(w_k)}\|$ is bounded because $\A \varphi_{j'}(w_k)\geq 1/2$.
If $j\in I$ and $j'\notin J_3$, then
$$ \langle C_{\varphi_{j'}}^*(k_{d',w_k}),C_{\varphi_{j}}^*(k_{d',w_k})\rangle = \prod_{l=1}^{d'}\frac{1-p_l^{-2\A(w_k)}}{1-p_l^{-(\overline{\varphi_{j'}(w_k)}+\varphi_j(w_k))}}.$$
Pick $j'\in J_1$. Then
\[ \overline{\varphi_{j'}(w_k)}+\varphi_j(w_k)\to \overline{A^{(j')}}\neq 0 \]
so that
\begin{equation}\label{eq:quadraticto0}
\prod_{l=1}^{d'}\frac{1-p_l^{-2\A(w_k)}}{1-p_l^{-(\overline{\varphi_{j'}(w_k)}+\varphi_j(w_k))}}\xrightarrow{k\to+\infty}0.
\end{equation}
Pick finally $j'\in J_2$ and write $j'=j_r$ for $1\leq r\leq p$. Then
$$\overline{\varphi_{j'}(w_k)}+\varphi_j(w_k)=(c_0-c_{0,j_r})it_k+A^{(j_r)}+o(1)$$
so that
\[ p_{d+r}^{-(\varphi_{j'}(w_k)+\varphi_j(w_k)}\xrightarrow{k\to+\infty}\omega(j')\neq 1. \]
Therefore,
\[ \frac{1-p_{d+r}^{-2\A(w_k)}}{1-p_{d+r}^{-(\overline{\varphi_{j'}(w_k)}+\varphi_j(w_k))}}
\xrightarrow{k\to+\infty}0. \]
On the other hand, since $\A \varphi_j(w_k),\A\varphi_{j'}(w_k)\geq \A(w_k)$, we also have
\[ \prod_{1\leq l\leq d',\ l\neq d+r}^{d'}\left| \frac{1-p_l^{-2\A(w_k)}}{1-p_l^{-(\overline{\varphi_{j'}(w_k)}+\varphi_j(w_k))}}\right| \leq 1. \]
Hence, \eqref{eq:quadraticto0} remains true for these values of $j$ and $j'$. This implies that
\begin{align*}
0\geq \left\|\sum_{j=1}^N \lambda_j C_{\varphi_j}\right\|_e&\geq \limsup_{k\to+\infty}
\Bigg( \sum_{j,j'\in I}\lambda_j\overline{\lambda_{j'}}\prod_{l=1}^{d'}\frac{1-p_l^{-2\A(w_k)}}{1-p_l^{-(\overline{\varphi_{j'}(w_k)}+\varphi_j(w_k))}}\\
&\quad\quad\quad\quad+ \sum_{j,j'\notin I}\lambda_j\overline{\lambda_{j'}}\prod_{l=1}^{d'}\frac{1-p_l^{-2\A(w_k)}}{1-p_l^{-(\overline{\varphi_{j'}(w_k)}+\varphi_j(w_k))}}\Bigg).
\end{align*}
Since each term in the previous sum is nonnegative (it is equal to $\left\|\sum_{j\in E}\lambda_j C_{\varphi_j}^*(k_{d,w_k})\right\|^2$ for either $E=I$ or $E=\{1,\dots,N\}\backslash I$),
this yields
\[ \limsup_{k\to+\infty}\sum_{j,j'\in I}\lambda_j\overline{\lambda_{j'}}\prod_{l=1}^{d'}\frac{1-p_l^{-2\A(w_k)}}{1-p_l^{-(\overline{\varphi_{j'}(w_k)}+\varphi_j(w_k))}}=0. \]
Consider now $j,j'\in I$, and observe that
\begin{align*}
\overline{\varphi_{j'}(w_k)}+\varphi_j(w_k)&=
\frac{2c_0 M}k+\overline{\mathcal B\psi_{j'}(z(k))}+\mathcal B\psi_j(z(k))+o\left(\frac 1k\right)\\
&=\frac{i(L_j(\beta)-L_{j'}(\beta))}{\sqrt k}+\frac{2c_0M}k-\frac{M(L_j(\mathbf e)+L_{j'}(\mathbf e))}{k}\\
&\quad\quad-\frac{\overline{Q_{j'}(\beta)}+Q_j(\beta)}{k}+o\left(\frac 1k\right).
\end{align*}
If $j\in\mathcal C_i$ and $j'\in\mathcal C_{i'}$ with $i\neq i'$, so that $L_j(\beta)\neq L_{j'}(\beta)$, then
\[ \overline{\varphi_{j'}(w_k)}+\varphi_j(w_k)\sim_{k\to+\infty} \frac{i(L_j(\beta)-L_{j'}(\beta))}{\sqrt k} \]
and this implies easily that, for all $l\geq 1$,
\[ \frac{1-p_l^{-2\A(w_k)}}{1-p_l^{-(\overline{\varphi_{j'}(w_k)}+\varphi_j(w_k))}}\xrightarrow{k\to+\infty}0. \]
If $j\neq j'\in\mathcal C_1$, one now gets that
\[ \frac{1-p_l^{-2\A(w_k)}}{1-p_l^{-(\overline{\varphi_{j'}(w_k)}+\varphi_j(w_k))}}\xrightarrow{k\to+\infty}\frac{M}{2M(c_0-L_1(\mathbf e))-(\overline{Q_{j'}(\beta)}+Q_j(\beta))}. \]
Arguing as above, we finally find that, for all $M>0$,
\begin{equation}\label{eq:essentialnormcomposition}
 \sum_{j,j'\in\mathcal C_1}\frac{\lambda_j\overline{\lambda_{j'}}M^{d'}}{(2M(c_0-L_1(\mathbf e))-{(\overline{Q_{j'}(\beta)}+Q_j(\beta) } )^{d'}}=0.
 \end{equation}
Let for $k=1,\dots,q'$, $\nu_k=\sum_{j\in\mathcal C_{1,k}}\lambda_j$ and let $\mu_k=Q_j(\beta)$ for some (all) $j\in\mathcal C_{1,k}$, $1\leq k\leq q'$. Then \eqref{eq:essentialnormcomposition}
may be rewritten
$$\sum_{k,k'=1}^{q'}\frac{\nu_k \overline{\nu_{k'}} M^{d'}}{\big(2M(c_0-L_1(\mathbf e))-(\overline{\mu_{k'}}+\mu_k)\big)^{d'}}=0.$$
Lemma \ref{lem:vanishingquadratic} tells us that $\nu_k=0$ for all $k=1,\dots,q'$. The case $k=1$ is exactly the required statement.
\end{proof}

\subsection{Linear combinations of polynomial symbols with degree at most 2}
We are now ready for the proof of our extension of the main result of \cite{bwy}.
\begin{theorem}\label{thm:lcpolynomial}
Let $(\varphi_j)_{j=1,\dots,N}\subseteq\mathcal G$ be a finite sequence of distinct symbols.
Assume that each $\varphi_j$ is a polynomial symbol with degree at most $2$.
Let also $(\lambda_j)_{j=1,\dots,N}$ be a finite sequence of non-zero complex numbers.
Then $\sum_{j=1}^N \lambda_j C_{\varphi_j}$ is compact on $\mathcal H$ if and only if each $C_{\varphi_j}$ is compact.
\end{theorem}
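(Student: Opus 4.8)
The plan is to establish the nontrivial implication: if $T:=\sum_{j=1}^N\lambda_j C_{\varphi_j}$ is compact then every $C_{\varphi_j}$ is compact, the converse being clear since a finite linear combination of compact operators is compact. I argue by contradiction, assuming $T$ compact but $C_{\varphi_{j_0}}$ non-compact for some $j_0$, and aiming at $\lambda_{j_0}=0$. The whole argument rests on the rigidity forced by the degree hypothesis: when $\varphi_j$ has degree at most $2$, its Bohr lift $\mathcal B\psi_j$ is a polynomial in $z_1,\dots,z_d$ of total degree at most $2$, hence extends to a $\mathcal C^\infty$ function on $\overline{\DD^d}$ and is entirely determined by its value and its partial derivatives up to order $2$ at a single point. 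Thus two symbols of degree at most $2$ sharing the same characteristic whose Bohr lifts have the same second order boundary data at one point must coincide; since the $\varphi_j$ are distinct, any collection of indices with a common characteristic and common second order boundary data contains exactly the index of the anchor. It is precisely the match between ``order $2$'' here and ``degree at most $2$'' in the hypothesis that makes the scheme work.

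First I dispose of the positive-characteristic symbols. Fix $j_0$ with $\cha(\varphi_{j_0})\geq 1$ and $C_{\varphi_{j_0}}$ non-compact. By the Bayart--Brevig characterization, a polynomial symbol of positive characteristic is compact if and only if it has restricted range, so $\varphi_{j_0}$ has unrestricted range, i.e. $\inf_{s\in\CC_+}\A\varphi_{j_0}(s)=0$. Since $\A(\mathcal B\psi_{j_0})\geq 0$ on $\TT^d$ and $c_{0,j_0}\A s\to 0$ as $\A s\to 0$, this infimum equals $\min_{\TT^d}\A(\mathcal B\psi_{j_0})$, which is therefore $0$, and by compactness there exists $z\in\Gamma(\mathcal B\psi_{j_0})$. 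I then apply Theorem \ref{thm:essentialnormcomposition} with $\varphi_{j_0}$ as anchor; the terms of $T$ carrying a different characteristic fall into the classes $J_2,J_3$ of that proof and contribute nothing to the relevant Gram sums. Compactness of $T$ yields $\sum_{j\in\mathcal J}\lambda_j=0$, and the rigidity above forces $\mathcal J=\{j_0\}$, whence $\lambda_{j_0}=0$, a contradiction. Hence every $\varphi_j$ with positive characteristic induces a compact operator, and so does their partial sum.

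There remain the zero-characteristic symbols. Subtracting the compact positive-characteristic part, $T_0:=\sum_{\cha(\varphi_j)=0}\lambda_j C_{\varphi_j}$ is compact, and I would run an essential-norm computation parallel to the proof of Theorem \ref{thm:essentialnormcomposition}. The difference is that ordinary reproducing kernels are confined to $\CC_{1/2}$, whereas a zero-characteristic symbol meets the critical line only as $\A w\to 0$; I therefore test $T_0^*$ against the normalized partial kernels $k_{\mathcal Q,w_k}$ and invoke Lemma \ref{lem:prk} in the form $C_{\varphi_j}^*K_{\mathcal Q,w_k}=K_{\varphi_{j,\mathcal Q}(w_k)}$. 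The Gram matrix then involves $\zeta\bigl(\overline{\varphi_{j',\mathcal Q}(w_k)}+\varphi_{j,\mathcal Q}(w_k)\bigr)$, and the local behaviour is governed by $\zeta(s)\sim 1/(s-1)$ as $s\to 1$ rather than by $1-p^{-s}\sim-(\log p)s$ as $s\to 0$. Expanding the $\mathcal B\psi_{j,\mathcal Q}$ to second order at a boundary point, choosing a direction that separates the distinct quadratic data through Lemma \ref{lem:severalquadratic}, and concluding with Lemma \ref{lem:vanishingquadratic} should again produce $\sum_{j\in\mathcal J}\lambda_j=0$, hence $\lambda_{j_0}=0$.

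The main obstacle is precisely this zero-characteristic step, and it is genuinely harder than the positive-characteristic one for the two reasons already visible in Theorem \ref{thm:essentialnormdifferencezero}. First, the mere existence of a boundary point where $\varphi_{\mathcal Q}$ has a finite angular derivative no longer signals non-compactness, so I cannot simply locate a zero of $\A(\mathcal B\psi_{j_0})$; I must instead control $\bigl(\A\varphi_{\mathcal Q}(s)-\tfrac12\bigr)/\A s$ and select the test sequence $(w_k)$ accordingly. Second, the zero-characteristic kernel estimates are effective only for $\mathrm{card}(\mathcal Q)=1$, so the boundary is probed one prime at a time, and one must check that the degree-$2$ cross terms $z_lz_m$ of the Bohr lifts do not spoil the separation of the various $\mathcal B\psi_{j,\mathcal Q}$; this is exactly where the hypothesis $\deg\varphi_j\leq 2$ is used most delicately to force the relevant index set down to $\{j_0\}$. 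Once this is secured, the contradiction $\lambda_{j_0}=0$ completes the proof.
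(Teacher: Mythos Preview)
Your treatment of the positive-characteristic case is correct and coincides with the paper's: once some $\varphi_{j_0}$ with $\cha(\varphi_{j_0})\geq 1$ has unrestricted range you locate $z\in\Gamma(\mathcal B\psi_{j_0})$, invoke Theorem~\ref{thm:essentialnormcomposition}, and the degree-$2$ rigidity collapses $\mathcal J$ to $\{j_0\}$.

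The zero-characteristic case, however, is a genuine gap---and you say as much (``should again produce'', ``the main obstacle is precisely this step''). The issue is not merely technical. Your plan is to adapt Theorem~\ref{thm:essentialnormcomposition} with single-prime partial kernels and hope the degree-$2$ cross terms $z_lz_m$ sort themselves out; but for $c_0=0$ the existence of a boundary point of $\mathcal B\psi_{j_0,\mathcal Q}$ does not witness non-compactness (as you note), and without further structure there is no anchor to expand around. The paper resolves this by inserting a structural step you do not have: if $C_{\varphi_j}$ is non-compact with $c_0=0$ and $\deg\varphi_j\leq 2$, then by \cite[Theorem~3]{bb} the \emph{complex dimension} of $\varphi_j$ is exactly $1$, forcing one of two explicit shapes,
\[
\varphi_j(s)=c_{1,j}+d_j\,(q_{1,j}q_{2,j})^{-s}\quad(q_{1,j}\neq q_{2,j}\in\mathcal P)
\qquad\text{or}\qquad
\varphi_j(s)=c_{1,j}+e_j q_j^{-s}+f_j q_j^{-2s}.
\]
If all $\varphi_j$ are of the first type they are \emph{linear} symbols, and the paper defers to the separate Theorem~\ref{thm:lclinearzero}. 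Otherwise one anchors at a symbol of the second type with prime $q_1$, normalizes so that $\varphi_1(0)=\tfrac12$, and tests against $k_{s_k}^{(q_1)}$ with $s_k=\tfrac1k+i\tfrac{M}{\sqrt k}$: symbols involving a different prime (either type) have $\varphi_{j,\{q_1\}}\equiv c_{1,j}$, so $C_{\varphi_j}^*(k_{s_k}^{(q_1)})\to 0$; for the remaining ``same-$q_1$'' symbols one expands $\varphi_j(s_k)$ to second order and finishes with Lemma~\ref{lem:vanishingquadratic}, where distinctness of $\varphi_j''(0)$ within the surviving class replaces your polytorus separation. In short, the cross terms $z_lz_m$ that worry you are not handled inside the kernel argument at all---they are peeled off as linear symbols and treated by a different theorem.
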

\begin{proof}
 We proceed by induction on $N$, the case $N=1$ being trivial. Assume that the result has been obtained for $N-1$ symbols and let us prove
 it for $N$ symbols. By contradiction, assume that no $C_{\varphi_j}$ is compact. If one symbol, say $\varphi_1$, has a positive characteristic,
 then we can conclude immediately using Theorem \ref{thm:essentialnormcomposition}. Indeed, since $\varphi_1$ has unrestricted range (it is assumed
 to be noncompact), $\Gamma(\mathcal B\psi_1)\neq\varnothing$ (the polycircle $\mathbb T^d$ is compact). Now, for $j\neq 1$, $\mathcal B\psi_1$ and $\mathcal B\psi_j$ cannot have the same second order
 boundary data at some point, since $\varphi_1$ and $\varphi_j$ are distinct polynomials of degree at most $2$. Thus, we get $\lambda_1=0$, a contradiction.

 Therefore, we are reduced to the case where all symbols $\varphi_j$ have zero characteristic. Let us fix $j\in\{1,\dots,N\}$. In the terminology of \cite{bb}, the complex dimension of $\varphi_j$ cannot
 be greater than or equal to $2$: since the degree of $\varphi_j$ is at most 2, Theorem 3 of \cite{bb} would imply that $C_{\varphi_j}$ is compact. Thus we have two possibilities for $\varphi_j$: either
 \begin{equation}\label{eq:lcpol1}
 \varphi_j(s)=c_{1,j}+d_j q_{1,j}^{-s}q_{2,j}^{-s}\textrm{ with }q_{1,j}\neq q_{2,j}\in\mathcal P,\ d_j\neq 0
 \end{equation}
 which corresponds to the case where the complex dimension and the degree of $\varphi_2$ are equal to 2,
 or
 \begin{equation}\label{eq:lcpol2}
 \varphi_j(s)=c_{1,j}+e_j q_j^{-s}+f_j q_j^{-2s}\textrm{ with }q_j\in\mathcal P
 \end{equation}
 which means that $\varphi_j$ has complex dimension 1 and degree 1 or 2 (provided $f_j=0$ or not).
If all symbols may be written as in \eqref{eq:lcpol1}, then we face to linear symbols and this case will be handled in the forthcoming Theorem \ref{thm:lclinearzero}. Therefore we assume that at least one symbol, say $\varphi_1$, has a form like in \eqref{eq:lcpol2}. Let $\chi\in\ttinf$. Since $(f\circ\varphi_j)_\chi=f\circ (\varphi_j)_\chi$, the following diagram commutes
$$\CD
  \mathcal{H} @> \sum_{j}\lambda_{j}C_{\varphi_{j}}  >>  \mathcal{H} \\
  U @V VV @V V VV  \\
   \mathcal{H} @> \sum_{j}\lambda_{j}C_{(\varphi_{j})_{\chi}} >> \mathcal{H}
\endCD $$
where $U$ and $V$ are the unitary maps of $\mathcal H$ defined by $U(f)=f$ and $V(f)=f_\chi$.
Therefore, the compactness of $\sum\lambda_j C_{\varphi_j}$ is equivalent to that of
$ \sum_{j}\lambda_{j}C_{(\varphi_{j})_{\chi}} $ which implies that we may assume that $\A (\varphi_1(0))=1/2$.
By vertical translations, we may even assume that $\varphi_1(0)=1/2$. Let
\begin{align*}
I&=\big\{j:\ \varphi_j\textrm{ may be written as in \eqref{eq:lcpol2} with }q_j=q_1,\ 
{\varphi_j(0)=\frac 12,}\ \varphi_j'(0)=\varphi_1'(0)\Big\}\\
J_1&=\Big\{j:\ \varphi_j\textrm{ may be written as in \eqref{eq:lcpol2} with }q_j=q_1,\
{\varphi_j(0)=\frac 12,}\ \varphi_j'(0)\neq \varphi_1'(0)\Big\}\\
J_2&=\Big\{j:\ \varphi_j\textrm{ may be written as in \eqref{eq:lcpol2} with }q_j=q_1,\
{\varphi_j(0)\neq \frac 12} \Big\}\\
J_3&=\Big\{j:\ \varphi_j\textrm{ may be written as in \eqref{eq:lcpol2} with }q_j\neq q_1\Big\}\\
J_4&=\Big\{j:\ \varphi_j\textrm{ may be written as in \eqref{eq:lcpol1}}\Big\}.
\end{align*}
Let finally $s_k=\frac 1k+i\frac M{\sqrt k}$ for some $M>0$ and consider the normalized partial reproducing kernels $k_{s_k}=K_{s_k}^{(q_1)}/\|K_{s_k}^{(q_1)}\|$; the sequence $(k_{s_k})$ goes weakly to zero. By Lemma \ref{lem:prk}, if $j\in I\cup J_1\cup J_2$,
\[ C_{\varphi_j}^*(k_{s_k})=\frac{K_{\varphi_j(s_k)}}{\|K_{s_k}^{(q_1)}\|} \]
whereas, if $j\in J_3\cup J_4$,
\[  C_{\varphi_j}^*(k_{s_k})=\frac{K_{c_{1,j}}}{\|K_{s_k}^{(q_1)}\|}\xrightarrow{k\to+\infty}0. \]
Using this last result, one gets
\begin{align*}
\left\|\sum_{j=1}^N \lambda_j C_{\varphi_j}\right\|_e^2&\geq \limsup_{k\to+\infty}
\left\|\sum_{j=1}^N \overline{\lambda_j} C_{\varphi_j}^*(k_{s_k})\right\|^2\\
&\geq \limsup_{k\to+\infty}\sum_{j,j'\in I\cup J_1\cup J_2}\frac{\overline{\lambda_j } {\lambda_{j'}}
\langle K_{\varphi_j(s_k)},K_{\varphi_{j'}(s_k)}\rangle}{\|K_{s_k}^{(q_1)}\|^2}\\
&\geq \limsup_{k\to+\infty}\sum_{j,j'\in I\cup J_1\cup J_2} \frac{\overline{\lambda_j} {\lambda_{j'}}
\zeta (\overline{\varphi_{j'}(s_k)}+\varphi_j(s_k))}{\|K_{s_k}^{(q_1)}\|^2}.
\end{align*}
We claim that, for all $j\in I$ and all $j'\in J_1\cup J_2$,
\begin{equation}\label{eq:lcpol3}
 \frac{
\zeta (\overline{\varphi_{j'}(s_k)}+\varphi_j(s_k))}{\|K_{s_k}^{(q_1)}\|^2}\xrightarrow{k\to+\infty}0.
\end{equation}
Let us observe that, for all $j\in\{1,\dots,N\}$,
$${\varphi_j(s_k)=\varphi_j(0)}+\frac{iM \varphi_j'(0)}{\sqrt k}+\frac{2\varphi_j'(0)-M^2\varphi_j''(0)}{2k}+
o\left(\frac 1k\right) $$
so that if $j\in I$ and $j'\in I\cup J_1\cup J_2$,
\begin{align*}
\overline{\varphi_{j'}(s_k)}+\varphi_j(s_k)&=\frac 12+{\overline{\varphi_{j'}(0)}+i\frac{\varphi'_j(0)-\varphi'_{j'}(0)}{\sqrt k}M } \\
&\quad\quad\quad+\frac{2\varphi_j'(0)+{2\varphi_{j'}'(0)}-M^2(\overline{\varphi_{j'}''(0)}+\varphi_j''(0))}{2k}+o\left(\frac 1k\right).
\end{align*}
Hence, \eqref{eq:lcpol3} follows now easily from the properties of $\zeta$ and $\|K_{s_k}^{(q_1)}\|^2\sim Ak$. On the other hand, when $j,j'\in I$, one gets
\[ \frac{
\zeta (\overline{\varphi_{j'}(s_k)}+\varphi_j(s_k))}{\|K_{s_k}^{(q_1)}\|^2}\xrightarrow{k\to+\infty} \frac{2A}{4\varphi_1'(0)-M^2(\overline{\varphi_{j'}''(0)}+\varphi_j''(0))}. \]
Therefore, arguing as in the proof of Theorem \ref{thm:essentialnormcomposition}, we get that for all $M>0$,
\[\sum_{j,j'\in I} \frac{\lambda_j \overline{\lambda_{j'}}}{4\varphi_1'(0)-M^2(\overline{\varphi_{j'}''(0)}+\varphi_j''(0))}=0. \]
Now, for $j\neq j'\in I$, one has $\varphi_j''(0)\neq\varphi_{j'}''(0)$. Indeed, if $j,j'\in I$ with $\varphi_j''(0)=\varphi_{j'}''(0)$, then
\begin{align*}
\varphi_j'(0)&=-(\log q_1) e_j-2(\log q_1) f_j=-(\log q_1) e_{j'}-2(\log q_1) f_{j'}=\varphi_{j'}(0)\\
\varphi_j''(0)&=(\log q_1)^2 e_j+4(\log q_1)^2 f_j=(\log q_1)^2 e_{j'}+4(\log q_1)^2 f_{j'}=\varphi_{j'}''(0).
\end{align*}
{which implies $e_j=e_{j'}$ and $f_j=f_{j'}$,} thus $\varphi_j=\varphi_{j'}$ and $j=j'$.
Finally, Lemma \ref{lem:vanishingquadratic} gives $\lambda_j=0$ for $j\in I$, a contradiction.
\end{proof}

\subsection{Linear combinations of linear symbols}
Theorem \ref{thm:lcpolynomial} does not encompass completely the main result of \cite{bwy}. Indeed, for instance,
$\varphi(s)=s+1-30^{-s}$ is a linear symbol which is not a polynomial symbol of degree at most $2$.
Nevertheless, we are also able to handle these symbols, thanks to the two forthcoming theorems.

\begin{theorem}\label{thm:lclinear}
Let $(\varphi_j)_{j=1,\dots,N}\subseteq\mathcal G$ be a finite sequence of distinct linear symbols such that at least one of them has a positive characteristic.
Let $(\lambda_j)_{j=1,\dots,N}$ be a finite sequence of non-zero complex numbers. Then $\sum_{j=1}^N \lambda_j C_{\varphi_j}$ is compact on $\mathcal H$ if and only if each $C_{\varphi_j}$ is compact.
\end{theorem}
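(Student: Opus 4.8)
The ``if'' direction is immediate, a finite linear combination of compact operators being compact, so I focus on the converse. The plan is a strong induction on $N$, arguing by contradiction. First I would dispose of the situation where some $C_{\varphi_{j_0}}$ is already compact: then $\sum_{j\neq j_0}\lambda_j C_{\varphi_j}$ is compact, and either some remaining symbol still has positive characteristic, so that the inductive hypothesis applies, or all remaining symbols have zero characteristic, in which case Theorem \ref{thm:lclinearzero} applies; either way every $C_{\varphi_j}$ with $j\neq j_0$ is compact, and we conclude. Hence it suffices to reach a contradiction assuming that \emph{no} $C_{\varphi_j}$ is compact. Relabelling, let $\varphi_1$ have positive characteristic $c_0\geq 1$; being noncompact it must have unrestricted range (by the compactness criterion for Dirichlet polynomial symbols with positive characteristic, \cite{bb}), whence $\Gamma(\mathcal B\psi_1)\neq\varnothing$.

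The engine is Theorem \ref{thm:essentialnormcomposition}. Taking the first $d$ primes large enough to factor every integer occurring in every $\psi_j$, each $\mathcal B\psi_j$ is a \emph{polynomial} on $\overline{\DD^d}$, hence of class $\mathcal C^2$, so the hypotheses are met. Consequently, for every $z\in\Gamma(\mathcal B\psi_1)$ we obtain $\sum_{j\in\mathcal J(z)}\lambda_j=0$, where $\mathcal J(z)$ consists of those $j$ sharing the second order boundary data of $\varphi_1$ at $z$ (necessarily with $\cha(\varphi_j)=c_0$). The goal is then to produce a single boundary point $z$ at which $\mathcal J(z)=\{1\}$: this forces $\lambda_1=0$, the desired contradiction. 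If $\varphi_1$ is the only symbol of characteristic $c_0$, then $\mathcal J(z)=\{1\}$ automatically; the real content is the case where several distinct symbols share this characteristic.

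The heart of the matter, and the step I expect to be the main obstacle, is a separation statement: if $\varphi_1$ and $\varphi_j$ are \emph{distinct} linear symbols of the same characteristic, then $S_j:=\{z\in\Gamma(\mathcal B\psi_1):\ \mathcal B\psi_j \text{ and } \mathcal B\psi_1 \text{ share second order data at }z\}$ is a proper, relatively closed subset of $\Gamma(\mathcal B\psi_1)$ with empty interior. Granting this, $\bigcup_{j\neq 1}S_j$ cannot exhaust $\Gamma(\mathcal B\psi_1)$, and any $z$ in the nonempty complement isolates $\varphi_1$. To prove the separation I would translate ``same second order data at $z$'' into the vanishing of the $2$-jet at $z$ of $h:=\mathcal B\psi_j-\mathcal B\psi_1$; expanding each monomial and using the first order relations to rewrite the $w_i^2$-coefficients, the conditions at $z$ collapse to $\sum_k \delta_k M_k(z)\,v_k=0$, where $M_k(z)=\prod_i z_i^{\alpha^{(k)}_i}$ is the $k$-th monomial, the $\delta_k$ are the coefficients of $h$, and $v_k=(1,\alpha^{(k)},\alpha^{(k)}(\alpha^{(k)})^{T})$ records the value at the exponent $\alpha^{(k)}\in\mathbb N_0^d$ of every monomial of degree at most two. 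The multiplicative independence built into a linear symbol makes the exponent vectors of each $\psi_j$ linearly independent and the relevant $M_k$ distinct characters; this is what should rule out, via linear independence of characters, the persistence of such a relation as $z$ ranges over a relatively open subset of $\Gamma(\mathcal B\psi_1)$ unless $h\equiv 0$, i.e. unless $\varphi_1=\varphi_j$.

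The delicate point inside this last step is that $\Gamma(\mathcal B\psi_1)$ may degenerate to a single subtorus, precisely when $\psi_1$ reduces to one term $c_q q^{-s}$ with $q$ composite: then $\A\mathcal B\psi_1$ depends only on the monomial $z^{\alpha}$, and the minimal-modulus condition coming from unrestricted range pins $z^{\alpha}$ to a constant along $\Gamma$. On such a subtorus distinct characters may coincide, so the restriction of $h$ to $\Gamma$ alone does not suffice; the argument must exploit the \emph{full} $2$-jet, including the derivatives transverse to the subtorus, to recover all the $\delta_k$ and force $\varphi_1=\varphi_j$. Once the separation lemma is established the proof closes as indicated, yielding $\lambda_1=0$ and completing the induction.
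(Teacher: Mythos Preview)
Your reduction to Theorem \ref{thm:essentialnormcomposition} is legitimate: with $d$ large enough every $\mathcal B\psi_j$ is a genuine polynomial on $\overline{\DD^d}$, so the hypotheses hold. The gap is exactly where you locate it, and it is real. Your separation lemma---that for each $j\neq 1$ the set $S_j$ has empty interior in $\Gamma(\mathcal B\psi_1)$---is not established by the sketch you give. The equation $\sum_k \delta_k M_k(z)\,v_k=0$ and ``linear independence of characters'' only bite if the restricted characters $M_k|_{\Gamma}$ remain distinct, and you yourself note that on a proper subtorus they need not be (e.g.\ $z_1z_2\equiv 1$ on $\Gamma$). Invoking ``the full 2-jet, including transverse derivatives'' is a hope, not an argument: you would have to show that the vectors $v_k$ together with the collapsed characters still force all $\delta_k=0$, and nothing you wrote does this. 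Worse, when $\Gamma$ is a finite set your Baire-type strategy is vacuous and you need $S_j=\varnothing$ outright, i.e.\ that second-order data at a \emph{single} point already determines a noncompact linear symbol among all others---a nontrivial rigidity statement you have not proved.

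The paper avoids this obstacle by \emph{not} applying Theorem \ref{thm:essentialnormcomposition} as a black box. It redoes the essential-norm estimate in coordinates adapted to $\varphi_1$: instead of the prime Bohr lift it uses $\mathcal B_{\mathcal Q}$ with $\mathcal Q=\{q_1,\dots,q_d\}$ the multiplicative generators of $\varphi_1$. In those coordinates the exponent vectors of $\varphi_1$ are the standard basis vectors, and the key algebraic step becomes transparent: writing any $\varphi_j$ with generators in $\mathcal Q_{\mathbb N}$ as $\sum_{\alpha\in\Gamma_j}c_\alpha^{(j)}((q^\alpha)^{-s}-1)$, the unrestricted-range hypothesis forces every $c_\alpha^{(j)}<0$, so the off-diagonal second-order equations $\sum_\alpha c_\alpha^{(j)}\alpha_l\alpha_m=0$ ($l\neq m$) compel each $\alpha$ to have a single nonzero entry, and then multiplicative independence plus the first-order equations pin down $\varphi_j=\varphi_1$. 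Symbols whose generators lie outside $\mathcal Q_{\mathbb N}$ (the set $J_2$) are handled by a separate Kronecker-approximation choice of the extra torus coordinates, not by second-order data at all. So the paper's route trades your topological separation lemma for a clean sign-based combinatorial argument in the right coordinates; that choice of coordinates is the missing idea in your proposal.
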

\begin{proof}
Arguing as at the beginning of the proof of Theorem \ref{thm:lcpolynomial},
we can assume that $\varphi_1$ has a positive characteristic,
and $\sum_{j=1}^N \lambda_j C_{\varphi_j}$ is compact
whereas no $C_{\varphi_j}$ is compact. Then it will be shown that $\lambda_1=0$, which leads to a contradiction with the hypotheses.
We can also assume that $\varphi_1(0)=0$. We write each $\varphi_j$ as
 $$\varphi_j(s)=c_{0,j}s+c_{1,j}+\sum_{l=1}^{d^{(j)}} c_{q_l^{(j)}}(q_l^{(j)})^{-s}=c_{0,j}s+\psi_j(s)$$
 where the integers $(q_l^{(j)})_{j=1,\dots,d^{(j)}}$ are multiplicatively independent and the complex numbers $c_{q_l^{(j)}}$ are non-zero.
 Since $\varphi_j$ is supposed to have unrestricted range and the $(q_l^{(j)})$ are multiplicatively independent, we know that $\A (c_{1,j})=\sum_{l=1}^{d^{(j)}} |c_{q_l^{(j)}}|$ if $c_{0,j}\neq 0$,
 and $\A (c_{1,j})=\frac 12+\sum_{l=1}^{d^{(j)}} |c_{q_l^{(j)}}|$ otherwise.
 To simplify the notation, we set $c_0=c_{0,1}$,
 $d=d^{(1)}$, $q_l=q_l^{(1)}$, $c_{q_l}=c_{q_l^{(1)}}$ and $\mathcal Q=\{q_1,\dots,q_d\}$. We set $\mathcal Q_{\mathbb N}=\{q\in\mathbb N:\ \exists \alpha\in\mathbb Q^d,\ q=q_1^{\alpha_1}\cdots q_d^{\alpha_d}\}$
 and observe that if $q\notin \mathcal Q_{\mathbb N}$, then the $(d+1)$ integers $q_1,\dots,q_d,q$ are multiplicatively independent.

 Inspired by the proof of Theorem \ref{thm:lcpolynomial}, we set
 \begin{align*}
  I&=\left\{j:\ c_{0,j}=c_0,\ \varphi_j(0)=0,\ q_l^{(j)}\in\mathcal Q_\mathbb N,\ l=1,\dots,d^{(j)}\right\}\\
  J_1&=\left\{j:\ c_{0,j}=c_0,\ \varphi_j(0)\neq 0,\ q_l^{(j)}\in\mathcal Q_\mathbb N,\ l=1,\dots,d^{(j)}\right\}\\
  J_2&=\left\{j:\ c_{0,j}=c_0,\ \exists l\in\{1,\dots,d^{(j)}\},\ q_l^{(j)}\notin \mathcal Q_\mathbb N\right\}\\
  J_3&=\left\{j:\ c_{0,j}\neq c_0\textrm{ and }c_{0,j}>0\right\}=:\{j_1,\dots,j_p\}\\
  J_4&=\left\{j:\ c_{0,j}=0\right\}.
 \end{align*}

 We start by working with indices in $I$. Therefore, let $j\in I$. The map $\varphi_j$ may be written
 $$\varphi_j(s)=c_0 s+\sum_{\alpha\in\Gamma_j} c_\alpha^{(j)}\big((q^\alpha)^{-s}-1\big)$$
 with $\Gamma_j\subseteq\mathbb Q^d\backslash\{(0,\dots,0)\}$ and $c_{\alpha}^{(j)}<0$ for all $\alpha\in\Gamma_j$. In particular
 $$\mathcal B_{\mathcal Q}\psi_j(z)=\sum_{\alpha\in\Gamma_j}c_\alpha^{(j)}\left(\prod_{l=1}^d z_l^{\alpha_l}-1\right).$$
  For $j\in I$, let us introduce
 \begin{align*}
  L_j(x_1,\dots,x_d)&=\sum_{l=1}^d \sum_{\alpha\in\Gamma_j}c_\alpha^{(j)}\alpha_l\log(p_l)x_l\\
  Q_j(x_1,\dots,x_d)&=\frac 12\sum_{1\leq l, m\leq d}\sum_{\alpha\in \Gamma_j}c_\alpha^{(j)}\alpha_l\alpha_m \log(p_l)\log(p_m)x_lx_m.
 \end{align*}
 Let also $\mathcal C=\{j\in I:\ L_j=L_1\}$ and pick $j\in I$, $j\neq 1$. If $Q_j=Q_1$, then
 \begin{align}\label{eq:lclinear1}
  \textrm{for all }1\leq l\neq m\leq d,\ \sum_{\alpha\in\Gamma_j}c_\alpha^{(j)}\alpha_l\alpha_m&=0\\
  \label{eq:lclinear2}
  \textrm{for all }1\leq l\leq d,\ \sum_{\alpha\in\Gamma_j}c_\alpha^{(j)}\alpha_l^2&=c_{q_l}.
 \end{align}
 Since each $c_\alpha^{(j)}$ is negative, \eqref{eq:lclinear1} tells us that for any $\alpha\in\Gamma_j$, at most
 one of its component is non-zero. Moreover, since the integers $q^\alpha$, $\alpha\in\Gamma_j$, are multiplicatively
 independent, if $l\in\{1,\dots,d\}$, at most one $\alpha\in\Gamma_j$ can have its non-zero component equal to its $l$-th component.
 Therefore, the sum appearing in \eqref{eq:lclinear2} reduces to at most one element. Since $c_{q_l}\neq 0$, the sum cannot be empty
 and we have shown that $\textrm{card}(\Gamma_j)=d$, that $\Gamma_j$ may be written $\{\alpha(1),\dots,\alpha(d)\}$ with $\alpha(l)=(0,\dots,0,\alpha_l(l),0,\dots,0)$, the non-zero term $\alpha_l(l)$ appearing at the $l-$th position.
 Now, if we compare \eqref{eq:lclinear2} with $\alpha_l(l)d_{\alpha(l)}^{(j)}=\sum_{\alpha\in\Gamma_j}\alpha_l c_\alpha^{(j)}=c_{q_l}$
 (which comes from $L_j=L_1$), we get that each $\alpha_l(l)$ has to be equal to $1$.
 Finally, we have shown that $\Gamma_j=\{(1,0,\dots,0),(0,1,0,\dots,0),\dots,(0,\dots,0,1)\}$ which in turn yields $\varphi_j=\varphi_1$,
 a contradiction.

From this, we deduce using an argument similar to that of the proof of Theorem \ref{thm:lcpolynomial} the existence
of $\beta\in\RR^d$ such that, for all $j\in I$, $j\neq 1$, either $L_j(\beta)\neq L_1(\beta)$ or $Q_j(\beta)\neq Q_1(\beta)$.
We now set, for $1\leq l\leq d$, $k\geq 1$ and $M>0$,
$$z_l(k)=p_l^{-\frac Mk+\frac{i\beta_l}{\sqrt k}}$$
so that
$$z_l(k)^{\alpha_l}=1+i\frac{\alpha_l \log(p_l)\beta_l}{\sqrt k}-\frac{M\alpha_l \log(p_l)+\frac 12\alpha_l^2\log^2(p_l)\beta_l^2}{k}+o\left(\frac 1k\right).$$
This yields for any $j\in I$,
\begin{align*}
 \mathcal B_{\mathcal Q}\psi_j(z(k))&=\frac{i}{\sqrt k}\sum_{\alpha\in\Gamma_j}c_\alpha^{(j)}\sum_{l=1}^d \alpha_l\log(p_l)\beta_l-\frac 1k
 \sum_{\alpha\in\Gamma_j}c_\alpha^{(j)}\Bigg(\sum_{l=1}^d M\alpha_l \log(p_l)\\
 &\quad+\frac 12\sum_{l=1}^d \alpha_l^2 \log^2(p_l)\beta_l^2+\sum_{1\leq l<m\leq d}c_\alpha^{(j)}
 \alpha_l\alpha_m \log(p_l)\log(p_m)\beta_l\beta_m\Bigg)\\
 &\quad\quad+o\left(\frac 1k\right)\\
 &=\frac{i L_j(\beta)}{\sqrt k}-\frac{L_j(\mathbf e)+Q_j(\beta)}{k}+o\left(\frac 1k\right).
\end{align*}

Suppose now that $j\in J_1$ which means that $\mathcal B_{\mathcal Q}\psi_j(\mathbf e)\neq 0$.
Hence, $\mathcal B_{\mathcal Q}\psi_j(z(k))$ tends to some $a_j\neq 0$. To handle $J_2$,
we first introduce integers $q_{d+1},\dots,q_e$ such that $q_1,\dots,q_d,q_{d+1},\dots, q_e$ is a maximal sequence of multiplicatively
independent integers extracted from $\{q_l^{(j)}:\ 1\leq j\leq N,\ 1\leq l\leq d^{(j)}\}$.
For $j\in J_2$, each $q_l^{(j)}$ may be written $q_1^{\alpha_1}\cdots q_d^{\alpha_d}q_{d+1}^{\alpha_{d+1}}\cdots q_e^{\alpha_e}$
with $\alpha_i\in\mathbb Q$. Furthermore, there exists some $l(j)$ such that the family $(\alpha_{d+1},\dots,\alpha_e)$
is not identically zero (otherwise $j$ would belong to $I\cup J_1$). Denote this family by $(\alpha_{d+1}(j),\dots,\alpha_e(j))$. Then
\[ E_j:=\left\{(\theta_{d+1},\dots,\theta_e)\in\mathbb R^{e-d}:\ e^{i(\theta_{d+1}\alpha_{d+1}(j)+\cdots+\theta_e\alpha_e(j)}c_{q_j^{(l)}}=|c_{q_l^{(j)}}|\right\} \]
has Lebesgue measure zero.
Therefore we can pick $(\theta_{d+1},\dots,\theta_e)\in\mathbb R^{e-d}\backslash \bigcup_{j\in J_2}E_j$. Denoting $\mathcal Q'=\{q_{d+1},\dots,q_e\}$, since
$\A (c_{1,j})=\sum_{l=1}^{d^{(j)}} |c_{q_l^{(j)}}|$, this implies that
$$\mathcal B_{\mathcal Q\cup\mathcal Q'}\psi_j(1,\dots,1,e^{i\theta_{d+1}},\dots,e^{i\theta_e})=:a_j\neq 0.$$
To proceed with the conclusion,
we finally fix some {positive integers} $q_{e+1},\dots,q_{e+p}$ such that $q_1,\dots,q_{e+p}$ are multiplicatively independent (recall that $p=\textrm{card}(J_3)$).
We then choose, by Kronecker's theorem, a sequence of real numbers $(t_k)$ such that
\begin{enumerate}[(a)]
 \item for $1\leq l\leq d$, $q_l^{-it_k}$ is very close to $q_l^{-\frac{i\beta_l}{\sqrt k}}$, so close that, setting $w_k=\frac{-M}k+it_k$,
 $$\mathcal B_{\mathcal Q}\psi_j(q_1^{-w_k},\dots,q_d^{-w_k})=\mathcal B_{\mathcal Q}\psi_j(z(k))+o\left(\frac 1k\right)$$
 for all $j\in I\cup J_1$.
 \item for $d+1\leq l\leq e$, $q_l^{-it_k}$ tends to $e^{i\theta_l}$, so that
 $${\mathcal B_{\mathcal Q\cup\mathcal Q'}\psi_j(q_1^{-w_k},\dots,q_e^{-w_k}) } =a_j+o\left(\frac 1k\right)$$
 for all $j\in J_2$.
 \item for all $j\in J_3$, writing $j=j_r$ with $1\leq r\leq p$,
 \[ q_{e+r}^{-it_k(c_{0,j_r}-c_0)}q_{e+r}^{-\mathcal B_{\mathcal Q\cup\mathcal Q'}\psi_j(1,\dots,1,e^{i\theta_{d+1}},\dots,e^{i\theta_e})}\textrm{ tends to some }\omega(j_r)\neq 1. \]
\end{enumerate}
Let $d'$ be such that $p^+(q_l)\leq p_{d'}$ for all $l=1,\dots,e+r$ and $p^+(q_l^{(j)})\leq p_{d'}$ for all $j\in J_4$ and all $1\leq l\leq d^{(j)}$.
We first observe that, for $j\in\mathcal C$ and $j'\notin \mathcal C$,
$$\langle C_{\varphi_{j'}}^*(k_{d',w_k}),C_{\varphi_{j}}^*(k_{d',w_k})\rangle\xrightarrow{k\to+\infty}0. $$
When $j'\notin J_4$, this follows from the choice of the sequence $t_k$, and the fact that for $l\in\{1,\dots,N\}$, $l\notin J_4$,
\begin{equation*}
 \varphi_l(w_k)=c_{0,l}w_k+\mathcal B_{\mathcal Q\cup\mathcal Q'}\psi_l(q_1^{-w_k},\dots,q_e^{-w_k}).
\end{equation*}
When $j'\in J_4$, this has already been observed during the proof of Theorem  \ref{thm:essentialnormcomposition}.

We now argue exactly as in the proof of Theorem \ref{thm:essentialnormcomposition}.
to conclude that, for all $M>0$,
\begin{equation} \label{eq:lclinear3}
  \sum_{j,j'\in\mathcal C} \frac{\lambda_j\overline{\lambda_{j'}}}{\big(2M(c_0-L_1(\mathbf e))-(\overline{Q_{j'}(\beta)}+Q_j(\beta))\big)^{d'}}=0.
\end{equation}
Finally we consider the following equivalence relation
on $\mathcal C$: $j\mathcal R j'\iff Q_j(\beta)=Q_{j'}(\beta)$. Let $\mathcal C_1,\dots,\mathcal C_\kappa$ be the equivalent classes for this relation, with $\mathcal C_1=\{1\}$,
let $\nu_k=\sum_{j\in\mathcal C_k}\lambda_j$ and let $\mu_k=Q_j(\beta)$ for some (all) $j\in\mathcal C_k$, $1\leq k\leq\kappa$. Then
\eqref{eq:lclinear3} may be rewritten
\[
 \sum_{k,k'=1}^{\kappa} \frac{{\nu_{k}}\overline{\nu_{k'}}}{\big(2M(c_0-L_1(\mathbf e))-(\overline{\mu_{k'}}+\mu_k)\big)^{d'}}=0. \]
Lemma \ref{lem:vanishingquadratic} tells us that, for all $k=1,\dots,\kappa$, $\nu_k=0$. In particular, $\lambda_1=\nu_1=0$, a contradiction!
\end{proof}

To tackle the case where all symbols have zero characteristic, we need one elementary lemma:

\begin{lemma}\label{lem:powersingle}
 Let $\mathcal Q$ be a finite set of positive integers. Assume that for any $n,m\in\mathcal Q$, there exist $\alpha,\beta\in\mathbb N$
 such that $n^{\alpha}=m^{\beta}$. Then there exists $q\in\mathbb N$ such that, for any $n\in\mathcal Q$, there exists $\alpha\in\mathbb N$
 with $n=q^\alpha$.
\end{lemma}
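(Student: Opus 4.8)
The plan is to translate the multiplicative hypothesis into a statement about the exponent vectors in the prime factorizations, and then to extract a common base by normalizing these vectors to a primitive one.

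First I would dispose of the degenerate case. If $1\in\mathcal Q$, then applying the hypothesis to a pair $(1,m)$ yields $\alpha,\beta\in\mathbb N$ with $m^\beta=1^\alpha=1$, which forces $m=1$; hence $\mathcal Q=\{1\}$ and the choice $q=1$ works since $1=1^1$. I may therefore assume from now on that every element of $\mathcal Q$ is at least $2$. Since $\mathcal Q$ is finite, only finitely many primes $p_1,\dots,p_D$ divide some element of $\mathcal Q$. For $n\in\mathcal Q$ I write $n=\prod_{j=1}^D p_j^{a_j(n)}$ and set $v(n)=(a_1(n),\dots,a_D(n))\in\mathbb N_0^D$, a non-zero vector because $n\geq 2$. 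The elementary but crucial dictionary is that, by unique factorization, for $n,m\in\mathcal Q$ the relation $n^\alpha=m^\beta$ is equivalent to $\alpha\,v(n)=\beta\,v(m)$. Thus the hypothesis says exactly that all the $v(n)$ are pairwise parallel; fixing $n_1\in\mathcal Q$ and applying the hypothesis to each pair $(n,n_1)$, I obtain for every $n\in\mathcal Q$ a rational $\mu_n=\beta/\alpha>0$ (positive since both vectors have non-negative entries and are non-zero) with $v(n)=\mu_n\,v(n_1)$.

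Next I would define the generator. Let $g=\gcd\big(a_1(n_1),\dots,a_D(n_1)\big)$ and put $u=v(n_1)/g$, so that $u\in\mathbb N_0^D$ has coprime entries, and set $q=\prod_{j=1}^D p_j^{u_j}\in\mathbb N$. For each $n\in\mathcal Q$ I then have $v(n)=(\mu_n g)\,u=:\lambda_n u$ with $\lambda_n\in\mathbb Q_{>0}$. The decisive point is integrality: writing $\lambda_n=a/b$ in lowest terms, the requirement $\lambda_n u_j=a u_j/b\in\mathbb N_0$ for every $j$ gives $b\mid u_j$ for all $j$ (because $\gcd(a,b)=1$), whence $b\mid\gcd(u_1,\dots,u_D)=1$ and therefore $\lambda_n\in\mathbb N$. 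Consequently $v(n)=\lambda_n u$ with $\lambda_n$ a positive integer, which reads precisely as $n=q^{\lambda_n}$, the desired conclusion.

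The only genuinely delicate step is this last integrality argument, which upgrades the a priori rational proportionality factor $\lambda_n$ into a positive integer; this is exactly where the primitivity of $u$ (the coprimality of its entries) is used, and it is what makes $q$ a true common base rather than merely a common "direction" in the exponent lattice. Everything else reduces to the routine correspondence between multiplicative relations among integers and linear relations among their exponent vectors, together with the trivial handling of the integer $1$.
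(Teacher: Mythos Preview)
Your proof is correct. The degenerate case, the passage to exponent vectors, the pairwise parallelism, and the integrality argument via primitivity of $u$ are all sound.

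Your route, however, differs from the paper's. The paper argues by induction on $\mathrm{card}(\mathcal Q)$: the cases $N=1,2$ are declared trivial, and for the step one applies the induction hypothesis to $\{q_1,\dots,q_N\}$ to get a common base $q'$, then applies the $N=2$ case to the pair $\{q',q_{N+1}\}$. Your argument is instead direct and constructive: you observe that the hypothesis forces all exponent vectors to lie on a single ray in $\mathbb N_0^D$, take the primitive lattice point $u$ on that ray, and set $q=\prod p_j^{u_j}$. This has the advantage of exhibiting $q$ explicitly and of making transparent the single substantive point (the integrality of $\lambda_n$ from the coprimality of the entries of $u$). The paper's induction is shorter on the page but pushes exactly this content into the $N=2$ base case, which in fact requires essentially the same primitivity reasoning you spell out.
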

\begin{proof}
 We proceed by induction on the cardinal $N$ of $\mathcal Q$, the result being trivial for $N=1$ and $N=2$. Let us assume that it has been proved until $N$
 and let $\mathcal Q=\{q_1,\dots,q_{N+1}\}$ be a set with $N+1$ elements. We apply the inductive step to find some $q'$ working with $\mathcal Q'=\{q_1,\dots,q_N\}$ and then we apply the result for $N=2$
 to $\{q',q_{N+1}\}$ to conclude.
\end{proof}

\begin{theorem}\label{thm:lclinearzero}
Let $(\varphi_j)_{j=1,\dots,N}\subseteq\mathcal G$ be a finite sequence of distinct linear symbols with zero characteristic.
Let $(\lambda_j)_{j=1,\dots,N}$ be a finite sequence of non-zero complex numbers. Then $\sum_{j=1}^N \lambda_j C_{\varphi_j}$ is compact on $\mathcal H$ if and only if each $C_{\varphi_j}$ is compact.
\end{theorem}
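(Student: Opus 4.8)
The ``only if'' implication is trivial, so I would focus on the converse, arguing by contradiction and by induction on $N$ exactly as in Theorems \ref{thm:lcpolynomial} and \ref{thm:lclinear}. If some $C_{\varphi_{j_0}}$ were compact, then $\sum_{j\neq j_0}\lambda_j C_{\varphi_j}$ would be compact and the inductive hypothesis would conclude; hence I may assume that $\sum_{j=1}^N\lambda_j C_{\varphi_j}$ is compact while \emph{no} $C_{\varphi_j}$ is compact. By the characterization recalled in the introduction (see \cite{b2,fi}), a noncompact linear symbol of zero characteristic has a single multiplicative generator and unrestricted range, so that each $\varphi_j(s)=c_{1,j}+c_{q_j}q_j^{-s}$ with $q_j\geq 2$ an integer, $c_{q_j}\neq 0$ and $\A(c_{1,j})=\frac12+|c_{q_j}|$. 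The whole point will be to show that $\lambda_1=0$, contradicting the hypothesis that all the $\lambda_j$ are non-zero.

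The first step is to uniformize the generators. I would say that $j\sim j'$ when $q_j$ and $q_{j'}$ are multiplicatively dependent, and let $\mathcal K$ be the class of the index $1$. By Lemma \ref{lem:powersingle} there is an integer $q\geq 2$ with $q_j=q^{n_j}$ for every $j\in\mathcal K$, whereas $q_{j'}$ is multiplicatively independent from $q$ for $j'\notin\mathcal K$. As at the beginning of the proof of Theorem \ref{thm:lcpolynomial}, replacing the $\varphi_j$ by vertical limits $(\varphi_j)_\chi$ preserves the compactness of the combination (here $c_{0,j}=0$, so the commuting diagram of that proof involves only the trivial action on $\mathcal H$), and choosing $\chi$ appropriately I may assume that $0$ is a boundary point of $\varphi_1$, that is $\A(\varphi_1(0))=\frac12$. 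I would then test the essential norm against the normalized kernels $k_{s_k}=K^{(q)}_{s_k}/\|K^{(q)}_{s_k}\|$, which tend weakly to zero. By Lemma \ref{lem:otherk}, $C_{\varphi_j}^*(K^{(q)}_{s_k})=K_{\varphi_j(s_k)}$ for $j\in\mathcal K$, while $C_{\varphi_{j'}}^*(K^{(q)}_{s_k})=K_{c_{1,j'}}$ is a \emph{fixed} vector for $j'\notin\mathcal K$; since $\|K^{(q)}_{s_k}\|\to+\infty$, all terms involving an index outside $\mathcal K$ disappear after normalization, so that only $\mathcal K$ survives.

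The second step is the second order analysis already used in Theorems \ref{thm:essentialnormcomposition} and \ref{thm:lcpolynomial}. Setting $s_k=\frac1k+i\frac M{\sqrt k}$, I would split $\mathcal K$ into $I=\{j\in\mathcal K:\varphi_j(0)=\varphi_1(0),\ \varphi_j'(0)=\varphi_1'(0)\}$, $J_1=\{j\in\mathcal K:\A(\varphi_j(0))=\frac12,\ (\varphi_j(0),\varphi_j'(0))\neq(\varphi_1(0),\varphi_1'(0))\}$ and $J_2=\{j\in\mathcal K:\A(\varphi_j(0))\neq\frac12\}$. Using $\zeta(z)\sim 1/(z-1)$ as $z\to 1$, $\|K^{(q)}_{s_k}\|^2\sim k/(2\log q)$, and the expansion $\varphi_j(s_k)=\varphi_j(0)+iM\varphi_j'(0)/\sqrt k+(2\varphi_j'(0)-M^2\varphi_j''(0))/2k+o(1/k)$, a pair $(j,j')$ contributes nothing to the limit unless both indices are in $I$: the mismatch of boundary values (for $J_2$) keeps $\overline{\varphi_{j'}(s_k)}+\varphi_j(s_k)$ away from $1$, while a first order mismatch (for $J_1$), of size $1/\sqrt k$, makes the ratio tend to zero. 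For $j,j'\in I$ the imaginary parts of $\varphi_j(0)$ and $\varphi_{j'}(0)$ cancel, so $\overline{\varphi_{j'}(s_k)}+\varphi_j(s_k)\to 1$, and the ratio converges to a constant multiple of $\big(4\varphi_1'(0)-M^2(\overline{\varphi_{j'}''(0)}+\varphi_j''(0))\big)^{-1}$. Splitting the full Hermitian form into the block over $I$ and the block over $\mathcal K\setminus I$, each of which is nonnegative and whose mutual cross terms vanish, the compactness forces the $I$-block to vanish, i.e.
$$\sum_{j,j'\in I}\frac{\lambda_j\overline{\lambda_{j'}}}{4\varphi_1'(0)-M^2(\overline{\varphi_{j'}''(0)}+\varphi_j''(0))}=0\qquad\text{for all }M>0.$$
Here $\varphi_1'(0)>0$ and, since $\varphi_j(0)=\varphi_1(0)$ forces $c_{q^{n_j}}=-a_j<0$ real, the numbers $\varphi_j''(0)=-a_j n_j^2(\log q)^2$ are real; the constraint $\varphi_j'(0)=a_jn_j\log q=\varphi_1'(0)$ makes them pairwise distinct over $I$ (equality of two of them forces $n_j=n_{j'}$, then $a_j=a_{j'}$, hence $\varphi_j=\varphi_{j'}$). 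After dividing by $M^2$ and setting $t=4\varphi_1'(0)/M^2\in(0,+\infty)$, Lemma \ref{lem:vanishingquadratic} (with $d=1$, $\alpha=1$, $\mu_j=-\varphi_j''(0)$) gives $\lambda_j=0$ for every $j\in I$, and in particular $\lambda_1=0$, the sought contradiction.

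The genuinely new difficulty, compared with Theorem \ref{thm:lcpolynomial}, is precisely the first step: the noncompact symbols need not be attached to a common prime, but only to multiplicatively dependent integers such as $q^{n_j}$. Lemma \ref{lem:powersingle} is exactly what allows one to produce a single integer base $q$, and Lemma \ref{lem:otherk}(i) is what turns $C_{\varphi_j}^*K^{(q)}_w$ into a genuine reproducing kernel $K_{\varphi_j(w)}$; once the generators are uniformized the argument parallels the polynomial case. The only secondary point deserving attention is the distinctness of the second order data $\varphi_j''(0)$ on $I$, which must now be checked even though both the coefficient $c_{q^{n_j}}$ and the exponent $n_j$ are allowed to vary.
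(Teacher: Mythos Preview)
Your proposal is correct and follows essentially the same route as the paper's proof: both reduce to single-generator noncompact symbols, invoke Lemma~\ref{lem:powersingle} to find a common base $q$, test against the normalized kernels $K^{(q)}_{s_k}/\|K^{(q)}_{s_k}\|$ (using Lemma~\ref{lem:otherk} to discard the multiplicatively independent class), perform a second-order expansion of $\varphi_j(s_k)$, and conclude with Lemma~\ref{lem:vanishingquadratic}. The only cosmetic differences are that the paper uses $w_k=\tfrac Mk+\tfrac i{\sqrt k}$ rather than your $s_k=\tfrac 1k+\tfrac{iM}{\sqrt k}$, and that the paper first isolates the set $I=\{j:\varphi_j(0)=\varphi_1(0)\}$ and only afterwards passes to the first-order class $\mathcal C=\{j\in I:\alpha_jd_j=\alpha_1d_1\}$, whereas you build the first-order constraint into $I$ from the start; the resulting identities and the distinctness check for the second-order data are identical.
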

\begin{proof}
 Arguing as above, we start with $N$ linear symbols $\varphi_1,\dots,\varphi_N$ having zero characteristic and with $N$ non-zero complex numbers $\lambda_1,\dots,\lambda_N$
 such that $\sum_{j=1}^N \lambda_j C_{\varphi_j}$ is compact whereas no $C_{\varphi_j}$ is compact. In particular, one may write $\varphi_j(s)=c_j+d_j q_j^{-s}$ {where $|d_j|=\A(c_j)-\frac 12$.}
 Without loss of generality, we shall assume that $\varphi_1(0)=\frac 12$.

 As before, we split $\{1,\dots,N\}$ into disjoint subsets:
\begin{align*}
 I&=\left\{j:\ \varphi_j(0)=\varphi_1(0)\textrm{ and }q_j^\alpha=q_1^{\beta}\textrm{ for some }\alpha,\beta\in\mathbb N\right\}\\
 J_1&=\left\{j:\ \varphi_j(0)\neq \varphi_1(0)\textrm{ and }q_j^\alpha=q_1^{\beta}\textrm{ for some }\alpha,\beta\in\mathbb N\right\}\\
 J_2&=\left\{j:\ q_j^{\alpha}\neq q_1^{\beta}\textrm{ for all }\alpha,\beta\in\mathbb N\right\}.
\end{align*}
Let $q$ be given by Lemma \ref{lem:powersingle} with $\mathcal Q=\{q_j:\ j\in I\cup J_1\}$, for $j\in I$, let $\alpha_j\in\mathbb N$ be such that $q_j=q^{\alpha_j}$
and, for $M>0$, consider $w_k=\frac Mk+\frac{i}{\sqrt k}$. When $j\in I$,
\begin{align*}
 \varphi_j(w_k)&=c_j+d_j \big(q^{-\alpha_j w_k}-1\big)+d_j\\
 &=\frac 12-\frac{i\alpha_j d_j \log q}{\sqrt k}-\frac{2M\alpha_j d_j \log q+\alpha_j^2 d_j\log^2 q}{2k}+o\left(\frac 1k\right)
\end{align*}
and let us recall that $d_j<0$. Moreover, when $j\in J_2$, applying Lemma \ref{lem:otherk},
\[ C_{\varphi_j}^* (k_{w_k}^{(q)})=\frac{K_{c_{j}}}{\|K_{w_k}^{(q)}\|}\xrightarrow{k\to+\infty}0. \]
Therefore, one gets
\begin{align*}
 0&\geq \left\|\sum_{j=1}^N \lambda_j C_{\varphi_j}\right\|_e^2\\
 &\geq \limsup_{k\to+\infty} \left\|\sum_{j=1}^N \overline{\lambda_j} C_{\varphi_j}^*(k_{w_k}^{(q)})\right\|^2\\
 &\geq \limsup_{k\to+\infty} \left\|\sum_{j\in I\cup J_1}\overline{\lambda_j} C_{\varphi_j}^*(k_{w_k}^{(q)})\right\|^2\\
 &\geq \limsup_{k\to+\infty} \sum_{j,j'\in I\cup J_1} \frac{\lambda_j \overline{\lambda_{j'}} K_{\varphi_{j'}(w_k)}(\varphi_j(w_k))}{\|K_{w_k}^{(q)}\|^2}\\
 &\geq \limsup_{k\to+\infty} \sum_{j,j'\in I\cup J_1} \lambda_{j}\overline{\lambda_{j'}}\frac{\zeta\big(\overline{\varphi_{j'}(w_k)}+\varphi_j(w_k)\big)}{\|K_{w_k}^{(q)}\|^2}.
\end{align*}
Now, if $j\in I$ and $j'\in J_1$, $\overline{\varphi_{j'}(w_k)}+\varphi_j(w_k)$ tends to some complex number in $\overline{\mathbb C_1}$ which is not equal to $1$. Hence,
\[  \frac{\zeta\big(\overline{\varphi_{j'}(w_k)}+\varphi_j(w_k)\big)}{\|K_{w_k}^{(q)}\|^2} \xrightarrow{k\to+\infty}0 \]
so that, arguing as in the proof of Theorem \ref{thm:lcpolynomial},
\[ \limsup_{k\to+\infty}\sum_{j,j'\in I}\lambda_j \overline{\lambda_{j'}} \frac{\zeta\big(\overline{\varphi_{j'}(w_k)}+\varphi_j(w_k)\big)}{\|K_{w_k}^{(q)}\|^2}=0. \]
Let finally $\mathcal C=\{j\in I:\ \alpha_j d_j=\alpha_1 d_1\}$. Using $\zeta(s)\sim_{s\to 1}\frac1{s-1}$ and $\|K_w^{(q)}\|^2\sim_{\A w\to 0}\frac{C_q}{\A w}$, we finally
get
\[ \sum_{j,j'\in\mathcal C} \frac{ \lambda_j\overline{\lambda_{j'}}}{4M\alpha_1d_1\log q+\alpha_j^2 d_j\log^2 q+\alpha_{j'}^2 d_{j'}\log^2q}=0. \]
Since for all $j\neq j'\in\mathcal C$, it is clear that $\alpha_j^2 d_j\neq \alpha_{j'}^2d_{j'}$ (otherwise $\alpha_j=\alpha_{j'}$ and $d_{j'}=d_j$)
we conclude by Lemma \ref{lem:vanishingquadratic} that all $\lambda_j$, $j\in\mathcal C$,
are equal to zero, a contradiction.
\end{proof}


\section{Weigthed composition operators, connected components and compact differences}

\label{sec:sufficient}

\subsection{Introduction}
Our aim in this section is to give sufficient conditions to prove
that two composition operators {$\cphiz$ and $\cphiu$} belong to the same connected component of $\mathcal C(\mathcal H)$,
or that their difference is compact.
We shall see that this could be deduced from results on weighted composition operators (see \cite{MT} where this idea seems to appear for the first time). We start with
$\varphi_0,\ \varphi_1\in\mathcal G$ with the same characteristic $c_0$. The simplest way to draw an
arc between $\cphiz$ and $\cphiu$ is to consider $\lambda\in[0,1]\mapsto C_{\varphi_\lambda}$,
where $\varphi_\lambda=(1-\lambda)\varphi_0+\lambda\varphi_1$. To prove that $\lambda\mapsto C_{\varphi_\lambda}$ is continuous, we start as in the proof of Theorem \ref{thm:connectedcompact}: let $\mu$ be a probability measure on $\RR$, $f\in\mathcal H$ with $\|f\|\leq 1$ and let us write
\[ \left\|C_{\varphi_\lambda}(f)-C_{\varphi_{\lambda'}}(f)\right\|^2=\int_{\ttinf}\int_{\RR}|f_{\chi^{c_0}}\circ(\varphi_\lambda)_\chi(it)-f_{\chi^{c_0}}\circ(\varphi_{\lambda'})_\chi(it)|^2 d\mu(t)dm(\chi). \]
At this point we do not use the mean value theorem but instead we write
$$\int_{\lambda}^{\lambda'} f'_{\chi^{c_0}}((\varphi_r)_\chi(it))dr=\frac 1{(\varphi_1)_\chi(it)-(\varphi_0)_\chi(it)}
\left(f_{\chi^{c_0}}\circ (\varphi_\lambda)_\chi(it)
-f_{\chi^{c_0}}\circ (\varphi_{\lambda'})_\chi(it)\right)$$
so that, using Jensen's inequality,
\begin{eqnarray}
\label{eq:wco1}
\left\|C_{\varphi_\lambda}(f)-C_{\varphi_{\lambda'}}(f)\right\|^2 &\leq&\displaystyle |\lambda'-\lambda|^2
\int_\lambda^{\lambda'}\!\int_{\ttinf}\!\int_{0}^1\left|(\varphi_1)_\chi(it)-(\varphi_0)_\chi(it)\right|^2\\[4mm]
&&\nonumber \quad\quad\quad\quad\quad\quad\quad\times
 |f'_{\chi^{c_0}}((\varphi_r)_\chi(it))|^2d\mu(t) dm(\chi)dr.
\end{eqnarray}
This inequality leads us to the following lemma.

\begin{lemma}\label{lem:wco}
Let $\varphi_0,\ \varphi_1\in \mathcal G$ with $\cha(\varphi_0)=\cha(\varphi_1)$ and let $\varphi_{\lambda}=(1-\lambda)\varphi_0+\lambda\varphi_1$, $\lambda\in[0,1]$.
\begin{enumerate}[(i)]
\item Suppose that for all $\lambda\in[0,1]$, $f\mapsto (\varphi_0-\varphi_1)C_{\varphi_{\lambda}}(f')$ acts boundedly on $\mathcal H$ with uniformly bounded norms. Then there is a constant $A>0$ such that, for all $\lambda,\lambda'\in[0,1]$,
\[ \left\|C_{\varphi_\lambda}-C_{\varphi_{\lambda'}}\right\|\leq A |\lambda-\lambda'|. \]
\item Suppose moreover that for each $\lambda\in[0,1]$, $f\mapsto (\varphi_0-\varphi_1)C_{\varphi_\lambda}(f')$ is a compact operator on $\mathcal H$. Then $\cphiz-\cphiu$ is compact.
\end{enumerate}
\end{lemma}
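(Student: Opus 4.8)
The plan is to route both statements through the weighted composition operators $T_\lambda\colon f\mapsto(\varphi_0-\varphi_1)\,C_{\varphi_\lambda}(f')$, which is exactly the object appearing in the hypotheses, and to exploit the identity \eqref{eq:wco1} together with the norm formula \eqref{eq:normH0}. The starting remark is that, since $\cha(\varphi_0)=\cha(\varphi_1)=c_0$, the terms $c_0 s$ cancel and $\varphi_0-\varphi_1=\psi_0-\psi_1$ is a genuine Dirichlet series; hence its vertical limits make sense, are multiplicative, and satisfy $(\varphi_0-\varphi_1)_\chi=(\varphi_0)_\chi-(\varphi_1)_\chi$.

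For part (i), I would first identify the inner integral in \eqref{eq:wco1} with $\|T_r f\|^2$. Using that vertical limits respect products and the composition rule $(f'\circ\varphi_r)_\chi=f'_{\chi^{c_0}}\circ(\varphi_r)_\chi$, one gets $(T_r f)_\chi(it)=\big((\varphi_0)_\chi(it)-(\varphi_1)_\chi(it)\big)\,f'_{\chi^{c_0}}\big((\varphi_r)_\chi(it)\big)$, so that by \eqref{eq:normH0} (with $\mu$ a probability measure) the double integral over $\ttinf\times\RR$ in \eqref{eq:wco1} is precisely $\|T_r f\|^2$; here the boundedness hypothesis is what guarantees $T_r f\in\mathcal H$, so that its a.e. boundary values genuinely compute the norm. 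Substituting the uniform bound $\|T_r\|\le B$ and performing the elementary $r$-integration collapses the right-hand side of \eqref{eq:wco1} to a constant times $|\lambda-\lambda'|^2\|f\|^2$, which is the desired Lipschitz estimate with $A\approx B$.

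For part (ii), I would represent the difference as a Bochner integral $Sf:=\int_0^1 T_\lambda f\,d\lambda$ in $\mathcal H$. This integral is well defined: for each $s\in\CC_{1/2}$ the scalar map $\lambda\mapsto(T_\lambda f)(s)=(\varphi_0(s)-\varphi_1(s))f'(\varphi_\lambda(s))$ is continuous, so testing against the kernels $K_s$ (whose span is dense) and using $\sup_\lambda\|T_\lambda f\|\le B\|f\|$ shows $\lambda\mapsto T_\lambda f$ is weakly measurable, hence strongly measurable by Pettis's theorem in the separable space $\mathcal H$, and integrable. The key identity $S=\cphiz-\cphiu$ is then proved by testing against reproducing kernels: for $s\in\CC_{1/2}$,
\[
\langle Sf,K_s\rangle=\int_0^1 (T_\lambda f)(s)\,d\lambda=(\varphi_0(s)-\varphi_1(s))\int_0^1 f'\big((1-\lambda)\varphi_0(s)+\lambda\varphi_1(s)\big)\,d\lambda,
\]
and the fundamental theorem of calculus collapses the last integral to $\dfrac{f(\varphi_1(s))-f(\varphi_0(s))}{\varphi_1(s)-\varphi_0(s)}$, giving $\langle Sf,K_s\rangle=f(\varphi_0(s))-f(\varphi_1(s))=\langle(\cphiz-\cphiu)f,K_s\rangle$; since point evaluations separate $\mathcal H$, $S=\cphiz-\cphiu$.

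Finally, to see that $S$ is compact I would use a weak-to-norm argument that deliberately avoids any operator-norm regularity of $\lambda\mapsto T_\lambda$. Given $\|f_m\|\le 1$, pass to a subsequence with $f_m\rightharpoonup f$ weakly; for each fixed $\lambda$ the compactness of $T_\lambda$ forces $T_\lambda f_m\to T_\lambda f$ in norm, while $\|T_\lambda f_m\|\le B$ is an integrable dominant on $[0,1]$. Dominated convergence for Bochner integrals then yields $Sf_m\to Sf$ in $\mathcal H$, so $S$ sends a weakly convergent sequence to a norm-convergent one and is therefore compact. I expect the main obstacle to be the bookkeeping of part (i)—rigorously identifying the boundary integral with $\|T_r f\|^2$ via the vertical-limit product and composition rules and the membership $T_r f\in\mathcal H$—whereas in part (ii) the only delicate point is the interchange of limit and integral, made painless by the observation that compactness of each individual $T_\lambda$ together with a single integrable dominant already suffices.
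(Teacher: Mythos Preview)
Your argument is correct. For part (i) you are doing exactly what the paper intends when it writes ``(i) is an immediate consequence of \eqref{eq:wco1}'': you supply the missing identification of the inner integral with $\|T_r f\|^2$ via \eqref{eq:normH0} and the vertical-limit rules, then feed in the uniform bound.

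For part (ii) your Bochner-integral route is valid but more elaborate than the paper's. The paper simply reuses \eqref{eq:wco1} with $\lambda=0$, $\lambda'=1$: after the same identification as in (i) one has $\|(C_{\varphi_0}-C_{\varphi_1})f_n\|^2\lesssim\int_0^1\|T_r f_n\|^2\,dr$, and scalar dominated convergence applies directly to $r\mapsto\|T_r f_n\|^2$ (pointwise convergence to $0$ by compactness of each $T_r$, constant dominant $B^2$). Your approach buys the exact operator identity $C_{\varphi_0}-C_{\varphi_1}=\int_0^1 T_\lambda\,d\lambda$, at the cost of the Pettis/measurability discussion and the reproducing-kernel verification; the paper's buys economy, since the needed inequality is already in hand from \eqref{eq:wco1} and no vector-valued integration enters.
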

\begin{proof}
(i) is an immediate consequence of \eqref{eq:wco1} whereas (ii) follows from an application of Lebesgue's dominated convergence theorem.
\end{proof}

Hence we are reduced to study the boundedness and the compactness of a weighted composition and derivation operator. By working with a Bergman space instead of a Hardy space as initial space, we will be reduced to study the same problems for weighted composition operators acting between different spaces. At this point, we will divide the proof into two cases, following the value of $c_0$. For $c_0>0$ we will stay on the halfplane (and on the hidden polydisc) and use Nevanlinna counting functions. For $c_0=0$ we will work on the disk and use Carleson measures.

\subsection{The case of positive characteristic}
\subsubsection{Boundedness and compactness of weighted composition operators}

We shall need the following lemma which explains how to compute the norm of an element of $\mathcal H$ using its derivative.

\begin{lemma}\label{lem:normH}
Let $\mu$ be a probability measure on $\RR$ and $f=\sum_n a_nn^{-s}$.
Then the following assertions are equivalent:
\begin{enumerate}[(i)]
\item $f\in\mathcal H$;
\item for almost all $\chi\in\ttinf$, $f_\chi$ extends to $\CC_+$ and
$$\tripleint|f_\chi'(\sigma+it)|^2\sigma d\sigma d\mu(t)dm(\chi)<+\infty.$$
\end{enumerate}
Moreover, provided $f\in\mathcal H$, then
\begin{align}
\|f\|^2&\approx |a_1|^2+\tripleint|f_\chi'(\sigma+it)|^2\sigma d\sigma d\mu(t)dm(\chi). \label{eq:normH1}
\end{align}
\end{lemma}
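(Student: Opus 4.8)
The plan is to reduce everything to the norm formula \eqref{eq:normH0} already available in the preliminaries, applied not to $f$ but to a shifted derivative. For $\sigma>0$, introduce
\[ g^\sigma(s):=f'(s+\sigma)=-\sum_{n\geq 2}a_n(\log n)n^{-\sigma}n^{-s}, \]
a Dirichlet series with coefficients $b_n=-a_n(\log n)n^{-\sigma}$. Its vertical limits satisfy $(g^\sigma)_\chi(it)=f_\chi'(\sigma+it)$, and when $f\in\mathcal H$ one has $g^\sigma\in\mathcal H$ for every $\sigma>0$ since $(\log n)^2 n^{-2\sigma}$ is bounded, with $\|g^\sigma\|^2=\sum_{n\geq 2}|a_n|^2(\log n)^2 n^{-2\sigma}$. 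Applying \eqref{eq:normH0} to $g^\sigma$ (recall $\mu(\RR)=1$) yields, for each $\sigma>0$, the inner identity
\[ \int_{\ttinf}\int_{\RR}|f_\chi'(\sigma+it)|^2 d\mu(t)dm(\chi)=\sum_{n\geq 2}|a_n|^2(\log n)^2 n^{-2\sigma}. \]

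Next I would multiply by $\sigma$ and integrate over $(0,1)$. Since the integrand is nonnegative, Tonelli's theorem lets me exchange the sum and the integrals, giving
\[ \tripleint|f_\chi'(\sigma+it)|^2\sigma\, d\sigma\, d\mu(t)\, dm(\chi)=\sum_{n\geq 2}c_n|a_n|^2,\qquad c_n:=(\log n)^2\int_0^1\sigma n^{-2\sigma}d\sigma. \]
An elementary integration by parts gives $c_n=\frac14-n^{-2}\big(\frac14+\frac{\log n}2\big)$ for $n\geq 2$, while $c_1=0$. Thus $c_n\in(0,\tfrac14)$ with $c_n\to\tfrac14$, so $\inf_{n\geq 2}c_n>0$ and $c_n\approx 1$ for $n\geq 2$. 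Consequently $\sum_{n\geq 2}c_n|a_n|^2\approx\sum_{n\geq 2}|a_n|^2$, which is exactly \eqref{eq:normH1} after adding back the $|a_1|^2$ term, and in particular proves (i)$\Rightarrow$(ii).

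For the converse (ii)$\Rightarrow$(i), I would observe that the inner identity above is in fact valid as an equality in $[0,+\infty]$ for any formal series $f$ whose vertical limits $f_\chi$ extend to $\CC_+$ for almost every $\chi$, which is precisely the hypothesis in (ii). This is obtained by applying \eqref{eq:normH0} to the partial sums $g^\sigma_N=-\sum_{2\leq n\leq N}a_n(\log n)n^{-\sigma}n^{-s}$, which are honest Dirichlet polynomials, and passing to the limit: by orthonormality of the characters $\chi\mapsto\chi(n)$ in $L^2(\ttinf)$, the Fourier coefficients of $\chi\mapsto f_\chi'(\sigma+it)$ are $-a_n(\log n)n^{-\sigma-it}$, so Parseval (respectively Bessel when the left-hand side is infinite) delivers the identity in $[0,+\infty]$. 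Integrating against $\sigma\,d\sigma$ as before, finiteness of the triple integral forces $\sum_{n\geq 2}c_n|a_n|^2<+\infty$, hence $\sum_{n\geq 2}|a_n|^2<+\infty$ and $f\in\mathcal H$.

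The main obstacle is this last rigorous passage to the $[0,+\infty]$ identity in the converse: identifying the character-Fourier coefficients of $\chi\mapsto f_\chi'(\sigma+it)$ and justifying the limit from the polynomial case require the almost everywhere convergence of $f_\chi$ on $\CC_+$ (from \cite{hls}) together with the orthonormality of the characters. The remaining ingredients, namely the application of \eqref{eq:normH0} to $g^\sigma$, the use of Tonelli, and the estimate $c_n\approx 1$, are routine.
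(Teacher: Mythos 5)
Your proposal is correct and follows essentially the same route as the paper, whose entire proof is the reduction to Dirichlet polynomials together with the computation $\tripleint|f_\chi'(\sigma+it)|^2\sigma\, d\sigma\, d\mu(t)\, dm(\chi)=\sum_{n\geq 2}|a_n|^2\log^2(n)\int_0^1\sigma n^{-2\sigma}d\sigma\approx\sum_{n\geq 2}|a_n|^2$ via \eqref{eq:normH0} and Fubini --- exactly your evaluation of the constants $c_n$. Your Bessel/Parseval treatment of the converse merely supplies detail that the paper leaves implicit in the phrase ``we just need to prove \eqref{eq:normH1} for a Dirichlet polynomial.''
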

\begin{proof}
We just need to prove \eqref{eq:normH1} for a Dirichlet polynomial. Now by \eqref{eq:normH0} and Fubini's theorem,
\begin{align*}
\tripleint|f_\chi'(\sigma+it)|^2\sigma d\sigma d\mu(t)dm(\chi) &= \sum_{n\geq 2} |a_n|^2 \log^2(n) \int_0^1 \sigma n^{-2\sigma}d\sigma\\
&\approx \sum_{n\geq 2}|a_n|^2.
\end{align*}
\end{proof}

This leads us to introduce the following Banach space of Dirichlet series: $\mathcal A$ is the completion of the space of Dirichlet polynomials for the norm
\begin{eqnarray}
\nonumber \|f\|_{\mathcal A}^2&:=&\int_0^1 \|f(\cdot+\sigma)\|^2_{\mathcal H}\sigma d\sigma\\
\label{eq:wco2}&=& \int_{\ttinf}\int_{\mathbb R}\int_0^1 |f_\chi(\sigma+it)|^2 \sigma d\sigma d\mu(t) dm(\chi).
\end{eqnarray}

This space, a particular example of the Bergman spaces introduced in \cite{BL15}, is a space of Dirichlet series defined on $\CC_{1/2}$. Moreover,
if $f\in\mathcal A$, for almost all $\chi\in\ttinf$, $f_\chi$ converges on $\CC_+$ and the norm in $\mathcal A$ can be computed using \eqref{eq:wco2}.

Lemma \ref{lem:normH} says that $f\in\mathcal H$ if and only if $f'\in\mathcal A$ and that up
to the constant term, the norms are comparable. Moreover, it is easy to check that if a sequence $(f_n)$ converges weakly to $0$
in $\mathcal H$, then $(f_n')$ converges weakly to $0$ in $\mathcal A$. Therefore we are led to the study of the boundedness and of the compacness of weighted composition operators from $\mathcal A$ to $\mathcal H$. This is the content of the next theorem, of independent interest.

\begin{theorem}\label{prop:wcopositive}
Let $\varphi\in\mathcal G$ with $\cha(\varphi)>0$ and write it $\varphi=c_0s+\psi$.
Let $u$ be a Dirichlet series convergent in $\CC_+$. Assume that there exists $C>0$ such that
\begin{itemize}
\item $|u'|\leq C,\  |\psi| \leq C$;
\item $|u|\leq C \A \varphi$.
\end{itemize}
Then $\|uC_\varphi\|_{\mathcal A\to\mathcal H}\lesssim_{c_0,C} 1$.
\end{theorem}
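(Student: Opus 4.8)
The plan is to pass from $\mathcal H$ to its Littlewood--Paley description provided by Lemma \ref{lem:normH}, differentiate the weighted composition, and reduce the two resulting pieces to a single estimate on a Nevanlinna counting function on the half-plane. This is the ``stay on the half-plane and use Nevanlinna counting functions'' strategy announced for $c_0>0$.

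First I would fix $f\in\mathcal A$, set $g=u\,C_\varphi f$, and write $g_\chi=u_\chi\cdot(f_{\chi^{c_0}}\circ\varphi_\chi)$ for $\chi\in\ttinf$. By Lemma \ref{lem:normH},
$$\|g\|_{\mathcal H}^2\approx|\widehat g(1)|^2+\tripleint|g_\chi'(\sigma+it)|^2\,\sigma\,d\sigma\,d\mu(t)\,dm(\chi),$$
the first-coefficient contribution $|\widehat g(1)|^2$ being lower order and easily controlled, so I concentrate on the integral. The product and chain rules give $g_\chi'=u_\chi'\,(f_{\chi^{c_0}}\circ\varphi_\chi)+u_\chi\,\varphi_\chi'\,(f'_{\chi^{c_0}}\circ\varphi_\chi)$, and from $|g_\chi'|^2\lesssim|u_\chi'|^2|f_{\chi^{c_0}}\circ\varphi_\chi|^2+|u_\chi|^2|\varphi_\chi'|^2|f'_{\chi^{c_0}}\circ\varphi_\chi|^2$ I split the integral into two terms. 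In the first I use $|u'|\le C$; in the second I use $|u|\le C\A\varphi$, which is precisely what converts the weight $|u_\chi|^2$ into $(\A\varphi_\chi)^2=(\A w)^2$ once I substitute $w=\varphi_\chi(s)$.

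The heart of the argument is the change of variables. For almost every $\chi$, $\varphi_\chi$ is an analytic self-map of $\CC_+$ with $\A\varphi_\chi\ge c_0\A s$, and the (non-injective) area formula gives, for the second term,
$$\int_{\CC_+}|u_\chi|^2|\varphi_\chi'|^2|f'_{\chi^{c_0}}\circ\varphi_\chi|^2\,\A s\,dA(s)\le C^2\int_{\CC_+}(\A w)^2\,|f'_{\chi^{c_0}}(w)|^2\,N_{\varphi_\chi}(w)\,dA(w),$$
where $N_{\varphi_\chi}(w)=\sum_{\varphi_\chi(s)=w}\A s$ is the half-plane Nevanlinna counting function. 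The key estimate I must prove is
$$N_{\varphi_\chi}(w)\lesssim_{c_0,C}\A w\qquad\text{uniformly in }\chi.$$
Granting it, the second term is bounded by $\int_{\CC_+}(\A w)^3|f'_{\chi^{c_0}}(w)|^2\,dA(w)$, which, after averaging over $\ttinf$ and using that $\chi\mapsto\chi^{c_0}$ preserves $m$, is comparable to $\|f\|_{\mathcal A}^2$ by the weight-$3$ analogue of Lemma \ref{lem:normH} (the identical Fubini/mode computation, since differentiation raises the Bergman weight by two). The first term equals $C^2\|C_\varphi f\|_{\mathcal A}^2$; applying the same $\mathcal A$-identity to $C_\varphi f$, whose derivative is $\varphi_\chi'\,(f'_{\chi^{c_0}}\circ\varphi_\chi)$, and the area formula reduces it to $\sum_{\varphi_\chi(s)=w}(\A s)^3\le(\A w/c_0)^2N_{\varphi_\chi}(w)\lesssim(\A w)^3$ by the very same estimate; alternatively one may simply invoke the boundedness of $C_\varphi$ on the Bergman space $\mathcal A$ from \cite{BL15}.

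The main obstacle is the counting bound $N_{\varphi_\chi}(w)\lesssim_{c_0,C}\A w$. Since $\A s\le\A w/c_0$ for every preimage, it amounts to controlling the number of preimages. For $\A w$ large this is immediate from Rouch\'e's theorem on the disc $|c_0 s-w|\le 2C$, on which $|c_0 s-w|>C\ge|\psi_\chi|$, so $\varphi_\chi-w=c_0(s-w/c_0)+\psi_\chi$ has exactly one zero. The delicate regime---which is in fact the only one that occurs, because $\A s<1$ forces $\A w<c_0+C$---is $w$ of bounded real part near the boundary, where $\psi_\chi'$ may be large; there I would bound the number of preimages by the argument principle, using only $|\psi_\chi|\le C$ (so that all preimages lie in $|s-w/c_0|\le C/c_0$), with uniformity in $\chi$ coming from the fact that every vertical limit obeys the same bound $|\psi_\chi|\le C$. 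Controlling this boundary count, together with the measure-theoretic passage from the averaged integral of Lemma \ref{lem:normH} to a genuine area integral for each $\varphi_\chi$, is where the real work lies.
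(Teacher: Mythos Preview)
Your overall architecture matches the paper exactly: apply Lemma \ref{lem:normH}, differentiate $uC_\varphi f$, split into the $u'$ term and the $u\varphi'$ term, use $|u'|\le C$ on the first (reducing to boundedness of $C_\varphi$ on $\mathcal A$, which is \cite{bai}), and on the second use $|u|\le C\A\varphi$ together with the area change of variables $w=\varphi_\chi(s)$ to land on an integral with weight $(\A w)^3$ controlled by the $\mathcal A$-norm of $f$. So the strategy is right.

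The genuine gap is in how you propose to bound the counting function. You want $N_{\varphi_\chi}(w)=\sum_{\varphi_\chi(s)=w}\A s\lesssim_{c_0,C}\A w$, and you argue that since each $\A s\le \A w/c_0$, it suffices to bound the \emph{number} of preimages, which you then try to do via Rouch\'e and the argument principle using only $|\psi_\chi|\le C$. This cannot work in the delicate boundary regime: the hypotheses give no control on $\psi'_\chi$, and a bounded Dirichlet series can oscillate arbitrarily often in the strip $|s-w/c_0|\le C/c_0$, so the preimage count is not uniformly bounded in terms of $c_0$ and $C$. (Rouch\'e also fails literally when $\A w\le 2C$, since the relevant disk protrudes past $i\mathbb R$, where $\varphi_\chi$ is not defined.) The correct tool is the half-plane Littlewood inequality from \cite[Proposition~3]{b2}: for any analytic $\phi:\CC_+\to\CC_+$ with $\A\phi\ge c_0\A s$, one has $\sum_{\phi(s)=w}\A s\le \A w/c_0$, proved by subharmonicity rather than by any zero count. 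This single line replaces your entire ``boundary count'' discussion and gives the required estimate uniformly in $\chi$ for free. The paper uses exactly this, phrased via the cubed counting function $\mathcal N_{\varphi_\chi,3}(w)=\sum(\A s)^3\le\big(\sum\A s\big)^3\le c_0^{-3}(\A w)^3$.

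A smaller point: be careful with the domain of integration. The paper takes $d\mu=\mathbf 1_{[0,1]}dt$, so the source integral is over $[0,1]^2$; the hypothesis $|\psi|\le C$ is what confines $\varphi_\chi([0,1]^2)$ to a fixed rectangle, so that after the change of variables the $t$-range in the target integral is bounded and the weight-$3$ analogue of Lemma \ref{lem:normH} applies. Writing $\int_{\CC_+}$ on both sides, as you do, hides this localisation step.
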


The proof will need the following analogue of Lemma \ref{lem:normH} for $\mathcal A$, which has a completely similar proof: for all $f\in\mathcal A$, for all $c_0\geq 1$,
\[ \|f\|_{\mathcal A}^2 \approx |a_1|^2+\int_{\ttinf}\!\int_{\RR}\!\int_0^{+\infty}|f_{\chi^{c_0}}'(\sigma+it)|^2\sigma^3 d\sigma d\mu(t)dm(\chi).\]

\begin{proof}[Proof of Theorem \ref{prop:wcopositive}]
Let $f\in\mathcal A$. We choose for $\mu$ the probability measure $d\mu = \mathbf 1_{[0,1]}dt$.
We intend to show that $\|u C_\varphi(f)\|_{\mathcal H}\lesssim_{c_0,C}\|f\|_{\mathcal A}.$
By Lemma \ref{lem:normH}, it suffices to show that $\|(uC_\varphi(f))'\|_{\mathcal A}\lesssim_{c_0,C}\|f\|_{\mathcal A}.$ Now,
\[ (uC_\varphi f)'=u' f\circ \varphi+u\varphi'  f'\circ\varphi. \]

Therefore we have to study the following integrals:
$$I_1=\int_{\ttinf}\!\int_0^1\!\int_0^1 |u_\chi'(\sigma+it)|^2 |f_{\chi^{c_0}} \circ \varphi_\chi(\sigma+it)|^2 \sigma d\sigma dt dm(\chi)$$
$$I_2=\int_{\ttinf}\!\int_0^1\!\int_0^1 |u_\chi(\sigma+it)|^2 |\varphi_\chi'(\sigma+it)|^2|f'_{\chi^{c_0}}\circ \varphi_\chi(\sigma+it)|^2 \sigma d\sigma dt dm(\chi).$$
We first handle $I_1$ and observe that, $u'_\chi$ being a vertical translate of
$u'$,  for all $\chi\in\ttinf$, for all $s\in\CC_+$, $|u'_\chi(s)|\leq C$.
On the other hand, we appeal to \cite{bai} to ensure that
\[ I_1\leq C \|f\circ\varphi\|_{\mathcal A}\leq C \|f\|_{\mathcal A}. \]
Let us turn to $I_2$. Using again vertical translates, we know that, for all $s\in\CC_+$,
$|u_\chi(s)|\leq C \A \big( \varphi_\chi(s)\big).$
Since we also know that $\A(s)\leq \A \big( \varphi_\chi(s)\big)/c_0$,
we obtain
$$I_2\lesssim_{c_0,C} \int_{\ttinf}\!\int_0^1\!\int_0^1 |\varphi_\chi'(\sigma+it)|^2|f'_{\chi^{c_0}}\circ \varphi_\chi(\sigma+it)|^2   \A^3 \big( \varphi_\chi(\sigma+it)\big) d\sigma dt dm(\chi).$$
We then do, for a fixed $\chi\in\ttinf$, the nonunivalent change of variables $w=\varphi_\chi(s)$.
We denote by $\mathcal N_{\varphi_\chi,3}$ the appropriated Nevanlinna counting function, namely
$$\mathcal N_{\varphi_\chi,3}(w)=\sum_{\varphi_\chi(w)=s}\A^3 (s).$$
Observing that our assumptions ensure that for all $\chi\in\ttinf$, $\varphi_\chi([0,1]^2)\subseteq[0,c_0+C]\times [-c_0-C,c_0+C]$, we then get
$$I_2\lesssim_{c_0,C} \int_{\ttinf}\!\int_{-c_0-C}^{c_0+C}\!\int_0^{c_0+C}|f'_{\chi^{c_0}}(\sigma+it)|^2  \mathcal N_{\varphi_\chi,3}(\sigma+it) d\sigma dt dm(\chi).$$
Now, since
\begin{align*}
N_{\varphi_\chi,3}(w)& \leq \left( \sum_{\varphi_\chi(w)=s}\A (s) \right)^3\\
&\leq c_0^3 \A^3(w)
\end{align*}
by \cite[Proposition 3]{b2}, we can finally conclude that
$$I_2\lesssim_{c_0,C}  \int_{\ttinf}\!\int_{-C-c_0}^{C+c_0}\!\int_0^{c_0+C}|f'_{\chi^{c_0}}(\sigma+it)|^2  \sigma^3 d\sigma dt dm(\chi)\lesssim_{c_0,C} \|f\|_{\mathcal A}^2.$$
\end{proof}

Let us now turn to compactness.

\begin{theorem}\label{prop:wcocompactnesspositive}
Let $\varphi\in\mathcal G$ with $\cha(\varphi)>0$ and write it $\varphi=c_0s+\psi$.
Let $u$ be a Dirichlet series convergent in $\CC_+$. Assume that there exists $C>0$ such that
\begin{itemize}
\item $|u'|\leq C,\  |\psi| \leq C$;
\item $|u|\leq C \A \varphi$;
\item $|u'|\to 0$ and $|u|=o\big(\A(\varphi)\big)$ as $\A(\varphi)\to 0$.
\end{itemize}
Then $uC_\varphi$ is a compact operator from $\mathcal A$ to $\mathcal H$.
\end{theorem}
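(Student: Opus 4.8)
The strategy is to mirror the proof of Theorem \ref{prop:wcopositive}, upgrading its two estimates to genuine smallness by exploiting the extra vanishing hypotheses. Since $\mathcal A$ and $\mathcal H$ are Hilbert spaces and $uC_\varphi$ is bounded (Theorem \ref{prop:wcopositive}), it suffices to prove that $\|uC_\varphi g_n\|_{\mathcal H}\to 0$ whenever $\|g_n\|_{\mathcal A}\leq 1$ and $g_n\rightharpoonup 0$ weakly in $\mathcal A$. By Lemma \ref{lem:normH}, $\|uC_\varphi g_n\|_{\mathcal H}^2\approx|a_1(uC_\varphi g_n)|^2+\|(uC_\varphi g_n)'\|_{\mathcal A}^2$. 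The constant term is harmless: as $c_0\geq 1$ we have $\varphi(\sigma)\to+\infty$, so $a_1(uC_\varphi g_n)=u_1\,a_1(g_n)$ with $u_1$ the first coefficient of $u$, and $a_1(g_n)\to 0$ because $f\mapsto a_1(f)$ is bounded on $\mathcal A$. Writing $(uC_\varphi g)'=u'\,g\circ\varphi+u\varphi'\,g'\circ\varphi$ as before, the remaining task is to show $I_1(g_n)\to 0$ and $I_2(g_n)\to 0$ for the two integrals of that proof. Throughout I use that the boundary hypotheses pass uniformly to the vertical limits: the quantities $\omega(\delta)=\sup\{|u'(s)|:\A\varphi(s)\leq\delta\}$ and $\rho(\delta)=\sup\{|u(s)|/\A\varphi(s):\A\varphi(s)\leq\delta\}$ are invariant under vertical translation, hence $|u'_\chi(s)|\leq\omega(\delta)$ and $|u_\chi(s)|\leq\rho(\delta)\A\varphi_\chi(s)$ on $\{\A\varphi_\chi\leq\delta\}$, with $\omega(\delta),\rho(\delta)\to 0$ as $\delta\to 0$.

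For each integral I split the domain according to $\A\varphi_\chi(\sigma+it)<\delta$ or $\geq\delta$. On the near-boundary part I simply repeat the estimates of Theorem \ref{prop:wcopositive} with $C$ replaced by $\omega(\delta)$ for $I_1$ and by $\rho(\delta)$ for $I_2$: using the boundedness of $C_\varphi$ on $\mathcal A$ (\cite{bai}) in the first case and, in the second, the change of variables $w=\varphi_\chi(s)$ together with the bound $\mathcal N_{\varphi_\chi,3}(w)\leq c_0^3\A^3 w$ from \cite[Proposition 3]{b2}, the near-boundary contributions are $\lesssim(\omega(\delta)^2+\rho(\delta)^2)\|g_n\|_{\mathcal A}^2\lesssim\omega(\delta)^2+\rho(\delta)^2$.

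It remains to show that, for $\delta$ fixed, the far-boundary contributions tend to $0$ as $n\to\infty$. Since $|\psi|\leq C$, one has $\varphi_\chi([0,1]^2)\subseteq[0,c_0+C]\times[-c_0-C,c_0+C]$, so on $\{\A\varphi_\chi\geq\delta\}$ the image point lies in the compact set $K_\delta=\{w:\delta\leq\A w\leq c_0+C,\ |\B w|\leq c_0+C\}\subseteq\CC_+$. The integral $I_2$ carries the Jacobian factor $|\varphi'_\chi|^2$, so there the change of variables of Theorem \ref{prop:wcopositive} applies verbatim and, after bounding the counting function and $\A^3 w\leq(c_0+C)^3$, the far part of $I_2$ is $\lesssim_{\delta,C}\int_{\ttinf}\int_{K_\delta}|g'_{n,\chi^{c_0}}(w)|^2\,dA(w)\,dm(\chi)$. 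The integral $I_1$ lacks this Jacobian, which is the main obstacle; I circumvent it with the sub-mean-value inequality: for $\A w\geq\delta$, $|g_{n,\chi^{c_0}}(w)|^2\lesssim\delta^{-2}\int_{D(w,\delta/2)}|g_{n,\chi^{c_0}}|^2\,dA$, and since $w\in K_\delta$ the disc stays in a slightly larger fixed compact set $K'_\delta\subseteq\CC_+$. As this bound no longer depends on $s$ and $\int_{[0,1]^2}\sigma\,dA\leq\frac 12$, the far part of $I_1$ is $\lesssim_{\delta,C}\int_{\ttinf}\int_{K'_\delta}|g_{n,\chi^{c_0}}(w)|^2\,dA(w)\,dm(\chi)$.

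Both remaining integrals collapse to coefficient sums: using that $\chi\mapsto\chi^{c_0}$ preserves the Haar measure on $\ttinf$ together with Parseval's identity, $\int_{\ttinf}|g_{n,\chi^{c_0}}(w)|^2\,dm(\chi)=\sum_m|a_m(g_n)|^2 m^{-2\A w}$, and likewise with an extra factor $\log^2 m$ for $g'$. Since $\A w\geq\delta$ on $K_\delta,K'_\delta$, these contributions are $\lesssim_\delta\sum_m|a_m(g_n)|^2 m^{-2\delta}$ and $\lesssim_\delta\sum_m|a_m(g_n)|^2\log^2 m\cdot m^{-2\delta}$. Finally, because $\|g_n\|_{\mathcal A}^2\approx|a_1(g_n)|^2+\sum_{m\geq 2}|a_m(g_n)|^2/\log^2 m\leq 1$ and $\log^4 m\cdot m^{-2\delta}\to 0$, a standard splitting at a large index $m_0$ bounds the tails of both sums by $\veps\|g_n\|_{\mathcal A}^2$ while the finite heads tend to $0$ (weak convergence forces $a_m(g_n)\to 0$ for each fixed $m$); hence both far-boundary contributions tend to $0$. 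Letting $n\to\infty$ and then $\delta\to 0$ yields $I_1(g_n),I_2(g_n)\to 0$, which completes the proof.
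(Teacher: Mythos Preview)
Your proof is correct and follows the paper's route almost verbatim: take a weakly null sequence, split each of $I_1,I_2$ according to $\A\varphi_\chi\lessgtr\delta$, control the near-boundary parts exactly as in Theorem \ref{prop:wcopositive} with $C$ replaced by $\omega(\delta),\rho(\delta)$, and reduce the far-boundary parts to coefficient sums $\sum_m|a_m(g_n)|^2 m^{-c\delta}$ which tend to $0$ by weak convergence. The one methodological variation is in the far-boundary part of $I_1$: the paper bounds $|f_{n,\chi^{c_0}}\circ\varphi_\chi(s)|$ pointwise by $C_\theta\|f_{n,\chi^{c_0}}(\cdot+\theta/2)\|_{H^2_i(\CC_+)}$ and then applies \eqref{eq:normH0}, whereas you use the sub-mean-value inequality to push into an area integral over a fixed compact $K'_\delta$ and then apply Parseval in $\chi$; both yield the same decaying sum. (Minor slip: on $K'_\delta$ one only has $\A w\geq\delta/2$, so the exponent is $m^{-\delta}$ rather than $m^{-2\delta}$, but this is immaterial.)
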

\begin{proof}
Let $(f_{n})$ be a sequence in $\mathcal A$ which converges weakly to $0$ and satisfies $\|f_{n}\|_{\mathcal A}\leq1$.
Arguing as in the proof of Theorem \ref{prop:wcopositive}, we have to show that $I_{n,1}$ and $I_{n,2}$ go to zero where
$$I_{n,1}=\int_{\ttinf}\!\int_0^1\!\int_0^1 |u_\chi'(\sigma+it)|^2 |f_{n,\chi^{c_0}} \circ \varphi_\chi(\sigma+it)|^2 \sigma d\sigma dt dm(\chi),$$
$$I_{n,2}=\int_{\ttinf}\!\int_0^1\!\int_0^1 |u_\chi(\sigma+it)|^2 |\varphi_\chi'(\sigma+it)|^2|f'_{n,\chi^{c_0}}\circ \varphi_\chi(\sigma+it)|^2 \sigma d\sigma dt dm(\chi).$$

We first handle $I_{n,1}$.
Let $\veps>0$ and $\theta\in(0,1)$ be such that, for all $\chi\in\ttinf$ and for all $s\in\CC_+$ with $\A \varphi_\chi(s)\in(0,\theta)$,
$|u'_\chi(s)|\leq\veps.$
Then, setting $\Omega_{\chi}=\left\{(\sigma,t)\in [0,1]^2:\ \A (\varphi_\chi(s))\in (0,\theta)\right\}$,
\begin{align*}
 &\int_{\ttinf}\!\int\!\!\int_{\Omega_\chi}|u_\chi'(\sigma+it)|^2 |f_{n,\chi^{c_0}} \circ \varphi_\chi(\sigma+it)|^2 \sigma d\sigma dt dm(\chi)\\
 &\quad\quad\leq \veps^2\int_{\ttinf}\!\int_0^1\!\int_0^1 |f_{n,\chi^{c_0}} \circ \varphi_\chi(\sigma+it)|^2 \sigma d\sigma dt dm(\chi)\\
 &\quad\quad\leq \veps^2.
\end{align*}
On the other hand, for all $\chi\in\ttinf$, for all $(\sigma,t)\in [0,1]^2\backslash \Omega_\chi$, setting $s=\sigma+it$, $\A(\varphi_\chi(s))\geq \theta$
and $|\B(\varphi_\chi(s))|\leq C+c_0$. Therefore, there exists $C_\theta>0$ such that, for all $\chi\in\ttinf$ and all such $(\sigma,t)$,
\[ |f_{n,\chi^{c_0}} \circ \varphi_\chi(\sigma+it)|^2 \leq C_\theta \|f_{n,\chi^{c_0}}(\cdot+\theta/2)\|^2_{H^2_i(\CC_+)}, \]
where $H^2_i(\CC_+)$ is the invariant Hardy space on the right halfplane, namely the set of analytic functions $g:\CC_+\to\CC_+$ satisfying
\[ \|g\|_{H^2_i(\CC_+)}^2:=\frac 1\pi \int_{-\infty}^{+\infty}|g(it)|^2 \frac{dt}{1+t^2}<+\infty. \]
This yields
\begin{align*}
 &\int_{\ttinf}\!\int\!\!\int_{[0,1]^2\backslash \Omega_\chi}|u_\chi'(\sigma+it)|^2 |f_{n,\chi^{c_0}} \circ \varphi_\chi(\sigma+it)|^2 \sigma d\sigma dt dm(\chi)\\
 &\quad\quad\lesssim_\theta \int_{\ttinf}\!\int_0^1\!\int_0^1\!\int_{\RR}\left|f_{n,\chi^{c_0}}\left(iu+\frac{\theta}2\right)\right|^2\frac{du}{1+u^2}\sigma d\sigma dtdm(\chi)\\
 &\quad\quad\lesssim_\theta \sum_{k\geq 1}|a_{n,k}|^2 k^{-\theta},
\end{align*}
where we have written each $f_n$ as $f_n=\sum_k a_{n,k}k^{-s}$ and we have used
\eqref{eq:normH0} for $d\mu(u)=\frac{du}{1+u^2}$. Since $(f_n)$ tends weakly to zero and $\theta>0$, it is easy to prove that $\sum_k |a_{n,k}|^2 k^{-\theta}$
goes to zero as $n$ tends to $+\infty$. Hence we have shown that for all $\veps>0$, $\limsup_{n\to+\infty}I_{n,1}\leq\veps$ which implies that $\lim_{n\to+\infty}I_{n,1}=0$.

Let us turn to $I_{n,2}$. Again for arbitrary $\veps>0$ there exists $\theta\in(0,1)$
such that $\A(\varphi_\chi(s))<\theta$ implies
$$|u_\chi(s)|\leq \veps \A \big(\varphi_\chi(s)\big).$$
Keeping the notation $\Omega_{\chi}=\left\{(\sigma,t)\in [0,1]^2:\ \A (\varphi_\chi(s))\in (0,\theta)\right\}$
and using $\A(s)\leq \A \big( \varphi_\chi(s)\big)/c_0$,
We obtain
\begin{align*}
 & \int_{\ttinf}\!\int\!\!\int_{\Omega_\chi} |u_\chi(\sigma+it)|^2 |\varphi_\chi'(\sigma+it)|^2|f'_{n,\chi^{c_0}}\circ \varphi_\chi(\sigma+it)|^2 \sigma d\sigma dt dm(\chi) \\
 & \leq    \frac{1}{c_{0}}\veps^{2}\int_{\ttinf}\!\int_0^1\!\int_0^1  |\varphi_\chi'(\sigma+it)|^2|f'_{n,\chi^{c_0}}\circ \varphi_\chi(\sigma+it)|^2 \A^3 \big( \varphi_\chi(\sigma+it)\big) d\sigma dt dm(\chi)\\
 & \lesssim_{c_0,C} \veps^{2}
\end{align*}
by the same argument as that of Theorem \ref{prop:wcopositive}. On the other hand, arguing as above,
\begin{align*}
  &\int_{\ttinf}\!\int\!\!\int_{[0,1]^2\backslash \Omega_\chi} |u_\chi(\sigma+it)|^2 |\varphi_\chi'(\sigma+it)|^2|f'_{n,\chi^{c_0}}\circ \varphi_\chi(\sigma+it)|^2 \sigma d\sigma dt dm(\chi) \\
 & \lesssim_{c_0,C} \int_{\ttinf}\!\int\!\!\int_{[0,1]^2\backslash\Omega_\chi} |\varphi'_\chi(\sigma+it)|^2 | f_{n,\chi^{c_0}}'\circ \varphi_\chi(\sigma+it)|^2 \A^3 ( \varphi_\chi(\sigma+it))
 d\sigma dt dm(\chi).
\end{align*}
For a fixed $\chi\in\ttinf$, we do the nonunivalent change of variables $w=\varphi_\chi(s)$. Observe that we now have,
for all $\chi\in\ttinf$, $\varphi_\chi([0,1]^2\backslash \Omega_\chi)\subseteq [\theta,c_0+C]\times [-c_0-C,c_0+C]$. We get, using again the Nevanlinna counting function $\mathcal N_{\varphi_\chi,3}$ as
in the proof of Theorem \ref{prop:wcopositive},
\begin{align*}
  &\int_{\ttinf}\!\int\!\!\int_{[0,1]^2\backslash \Omega_\chi} |u_\chi(\sigma+it)|^2 |\varphi_\chi'(\sigma+it)|^2|f'_{n,\chi^{c_0}}\circ \varphi_\chi(\sigma+it)|^2 \sigma d\sigma dt dm(\chi) \\
&\quad\quad\lesssim_{c_0,C} \int_{\ttinf}\!\int_{-c_0-C}^{c_0+C}\!\int_{\theta}^{\sigma_0} |f_{n,\chi^{c_0}}'(\sigma+it)|^2\sigma^3 d\sigma dt dm(\chi)\\
&\quad\quad\lesssim_{c_0,C}\sum_{k=2}^{+\infty}(\log k)|a_{k,n}|^2 k^{-2\theta}
\end{align*}
and again it is easy to prove that this last quantity goes to zero as $n$ goes to $+\infty$ by weak convergence of $(f_n)$ to zero.
This concludes the proof that $uC_{\varphi}$ is a compact operator from $\mathcal A$ to $\mathcal H$.
\end{proof}

\subsubsection{Applications to connected components and compact differences}


Lemmas \ref{lem:wco} and  \ref{lem:normH},  Theorems \ref{prop:wcopositive} and \ref{prop:wcocompactnesspositive} lead us to the following sufficient conditions for two composition operators to be in the same component or for their difference to be compact.

\begin{theorem}\label{thm:samecomponent}
Let $\varphi_0$ and $\varphi_1\in\mathcal G$ with $\cha(\varphi_0)=\cha(\varphi_1)=:c_0\geq 1$ and
write them $\varphi_0=c_0s+\psi_0$, $\varphi_1=c_0s+\psi_1$. Assume moreover that there exists $C>0$ such that
\begin{itemize}
\item $|\varphi_0-\varphi_1|\leq C\min(\A \varphi_0,\A \varphi_1)$;
\item $|\psi_0|$, $|\psi_1|\leq C$;
\item $|\varphi_0'-\varphi_1'|\leq C$.
\end{itemize}
Then $\cphiz$ and $\cphiu$ belong to the same component of $\mathcal C(\mathcal H)$. If moreover we assume that
\[ |\varphi_0-\varphi_1|=o\big(\min(\A\varphi_0,\A \varphi_1)\big)\textrm{ and }\varphi_0'-\varphi_1'\to 0\textrm{ as }\min(\A\varphi_0,\A\varphi_1)\to 0 \]
then $\cphiz-\cphiu$ is compact.
\end{theorem}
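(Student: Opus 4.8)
The plan is to connect $\cphiz$ and $\cphiu$ by the arc $\lambda\in[0,1]\mapsto C_{\varphi_\lambda}$, where $\varphi_\lambda=(1-\lambda)\varphi_0+\lambda\varphi_1=c_0s+\psi_\lambda$ with $\psi_\lambda=(1-\lambda)\psi_0+\lambda\psi_1$, and to feed this into Lemma \ref{lem:wco}. First I would check that each $\varphi_\lambda$ genuinely lies in $\mathcal G$, so that $C_{\varphi_\lambda}\in\mathcal C(\mathcal H)$: since $\A\psi_0,\A\psi_1\geq 0$ on $\CC_+$, convexity gives $\A\psi_\lambda\geq 0$ there, and the open mapping theorem forces $\psi_\lambda(\CC_+)\subseteq\CC_+$ unless $\psi_\lambda$ is constant with nonnegative real part; in either case $\varphi_\lambda\in\mathcal G$, the uniform convergence of $\psi_\lambda$ being inherited from $\psi_0,\psi_1$. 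The weight to be studied is $u:=\varphi_0-\varphi_1=\psi_0-\psi_1$, a Dirichlet series convergent in $\CC_+$, and the operator appearing in Lemma \ref{lem:wco} is precisely $f\mapsto u\,C_{\varphi_\lambda}(f')$.

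The key observation is that this operator factors as $\mathcal H\xrightarrow{D}\mathcal A\xrightarrow{\,u C_{\varphi_\lambda}\,}\mathcal H$, where $Df=f'$. By Lemma \ref{lem:normH}, $D$ is bounded, and it sends weakly null sequences of $\mathcal H$ to weakly null sequences of $\mathcal A$; so everything reduces to the weighted composition operator $uC_{\varphi_\lambda}\colon\mathcal A\to\mathcal H$, for which Theorems \ref{prop:wcopositive} and \ref{prop:wcocompactnesspositive} are tailor-made. I would then simply verify their hypotheses with $\varphi=\varphi_\lambda$ and $u=\varphi_0-\varphi_1$: one has $|u'|=|\varphi_0'-\varphi_1'|\leq C$; $|\psi_\lambda|\leq C$ by convexity of $|\cdot|$; and, since $\A\varphi_\lambda=(1-\lambda)\A\varphi_0+\lambda\A\varphi_1\geq\min(\A\varphi_0,\A\varphi_1)$, the first hypothesis of the theorem gives $|u|\leq C\min(\A\varphi_0,\A\varphi_1)\leq C\,\A\varphi_\lambda$. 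Crucially, the bound furnished by Theorem \ref{prop:wcopositive} depends only on $c_0$ and $C$, hence is uniform in $\lambda$, so $f\mapsto uC_{\varphi_\lambda}(f')$ is bounded on $\mathcal H$ with $\lambda$-independent norm. Lemma \ref{lem:wco}(i) then yields $\|C_{\varphi_\lambda}-C_{\varphi_{\lambda'}}\|\leq A|\lambda-\lambda'|$, so $\lambda\mapsto C_{\varphi_\lambda}$ is a Lipschitz arc joining $\cphiz$ to $\cphiu$, proving they lie in the same component.

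For the compactness of $\cphiz-\cphiu$ I would invoke Lemma \ref{lem:wco}(ii), for which it suffices that $uC_{\varphi_\lambda}\colon\mathcal A\to\mathcal H$ be compact for each fixed $\lambda$; this is exactly Theorem \ref{prop:wcocompactnesspositive}, whose two extra hypotheses I must match. Here $|u'|=|\varphi_0'-\varphi_1'|\to 0$ and $|u|=o\big(\min(\A\varphi_0,\A\varphi_1)\big)$ as $\min(\A\varphi_0,\A\varphi_1)\to 0$, and the point is to transfer these to the condition $\A\varphi_\lambda\to 0$. Since all real parts are nonnegative and $\lambda$ is fixed, the inequality $\min(\A\varphi_0,\A\varphi_1)\leq\A\varphi_\lambda$ shows that $\A\varphi_\lambda\to 0$ forces $\min(\A\varphi_0,\A\varphi_1)\to 0$, whence $|u'|\to 0$ and $|u|=o(\A\varphi_\lambda)$ as $\A\varphi_\lambda\to 0$. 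Thus $uC_{\varphi_\lambda}$ is compact from $\mathcal A$ to $\mathcal H$, and composing with $D$ (which preserves bounded weakly null sequences) makes $f\mapsto uC_{\varphi_\lambda}(f')$ compact on $\mathcal H$ for each $\lambda$; Lemma \ref{lem:wco}(ii) then closes the argument.

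The genuine analytic work is entirely contained in Theorems \ref{prop:wcopositive} and \ref{prop:wcocompactnesspositive} (the Nevanlinna counting-function estimates), which I am allowed to assume, so the only real obstacles here are the bookkeeping points flagged above: ensuring $\varphi_\lambda\in\mathcal G$ for every $\lambda$, extracting a $\lambda$-uniform operator-norm bound from the fixed constants $c_0,C$ (this is what makes Lemma \ref{lem:wco}(i) applicable), and converting the decay hypotheses stated in terms of $\min(\A\varphi_0,\A\varphi_1)$ into ones in terms of $\A\varphi_\lambda$.
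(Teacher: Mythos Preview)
Your proposal is correct and follows precisely the route the paper intends: the paper's proof is just the single sentence that the theorem follows from Lemmas \ref{lem:wco} and \ref{lem:normH} together with Theorems \ref{prop:wcopositive} and \ref{prop:wcocompactnesspositive}, and you have accurately supplied the bookkeeping (checking $\varphi_\lambda\in\mathcal G$, the convexity inequality $\A\varphi_\lambda\geq\min(\A\varphi_0,\A\varphi_1)$ giving both the uniform bound and the transfer of the little-$o$ conditions) that makes this work.
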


We can deduce from this a result about non isolation of composition operators.

\begin{corollary}\label{cor:samecomponent}
Let $\varphi=c_0s+\psi\in\mathcal G$ with positive characteristic. Assume that $\psi$ and $\psi'$ are bounded on $\CC_+$ and that there exist $C>0$, $k\geq 1$, $t_0\in\RR$ such that, for all $s\in\CC_+$,
$$|\B (\psi(s))-t_0|\leq C \big( \A(\psi(s))\big)^{1/k}.$$
Then $C_\varphi$ is not isolated in $\mathcal C(\mathcal H)$.
\end{corollary}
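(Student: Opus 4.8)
The plan is to apply the sufficient condition of Theorem \ref{thm:samecomponent} to $\varphi_0:=\varphi$ and a carefully chosen nearby symbol $\varphi_1$, thereby placing $C_\varphi$ in a component containing another composition operator. Write $g=\psi-it_0$, a bounded Dirichlet series on $\CC_+$ with $\A g=\A\psi$ and $\B g=\B\psi-t_0$, and set $m=\lceil k\rceil$. For a small parameter $\veps>0$ to be fixed, I would take
\[ \varphi_1=c_0 s+\psi_1,\qquad \psi_1=\psi+\veps\, g^m=\psi+\veps(\psi-it_0)^m. \]
Since the bounded Dirichlet series converging in $\CC_+$ form a Banach algebra, $g^m$ is again such a series (in particular it converges uniformly in every $\CC_\veps$), so $\psi_1$ is a legitimate symbol part; and $\varphi_1\neq\varphi_0$ precisely when $g\not\equiv 0$, i.e.\ when $\psi$ is not the constant $it_0$.

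The crux is the pointwise estimate $|g^m|\lesssim \A\psi$ on all of $\CC_+$. From the hypothesis $|\B\psi-t_0|\le C(\A\psi)^{1/k}$ together with $\A\psi\le R:=\|\psi\|_\infty<+\infty$, and using $k\ge 1$ so that $(\A\psi)^2\le R^{2-2/k}(\A\psi)^{2/k}$, one gets $|g|^2=(\A\psi)^2+(\B\psi-t_0)^2\lesssim (\A\psi)^{2/k}$, hence $|g|\lesssim(\A\psi)^{1/k}$. Since $m\ge k$, this yields
\[ |g^m|=|g|^m\lesssim (\A\psi)^{m/k}\le R^{m/k-1}\,\A\psi\lesssim \A\psi. \]
This is exactly the mechanism that converts the boundary-contact rate $1/k$ into the linear control demanded by the first bullet of Theorem \ref{thm:samecomponent}.

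With this estimate I would verify the three hypotheses. Boundedness of $\psi_0=\psi$ and $\psi_1$ is immediate since $g^m$ is bounded, and $\varphi_0'-\varphi_1'=-\veps(g^m)'=-\veps m\,g^{m-1}\psi'$ is bounded by boundedness of $g$ and of $\psi'$. For the first bullet, note $\A\varphi_0\ge\A\psi$ on $\CC_+$; choosing $\veps$ so small that $\veps|g^m|\le\frac12\A\psi$ everywhere (possible since $|g^m|\le C_3\A\psi$), one also gets $\A\varphi_1\ge\A\psi-\veps|g^m|\ge\frac12\A\psi$, whence $|\varphi_0-\varphi_1|=\veps|g^m|\lesssim\veps\A\psi\lesssim\min(\A\varphi_0,\A\varphi_1)$. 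The same smallness forces $\A\psi_1\ge\frac12\A\psi>0$ on $\CC_0$ (for nonconstant $\psi$ one has $\psi(\CC_0)\subseteq\CC_0$), so $\varphi_1\in\mathcal G$ with $\cha(\varphi_1)=c_0$. Theorem \ref{thm:samecomponent} then puts $\cphiz$ and $\cphiu$ in the same component, and as the symbols are distinct so are the operators, proving $C_\varphi$ is not isolated.

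The main obstacle is the estimate $|g^m|\lesssim\A\psi$ and the correct choice $m=\lceil k\rceil$: it is here that the hypothesis is genuinely used, and the boundedness of $\A\psi$ is essential to absorb the surplus factor $(\A\psi)^{m/k-1}$. A secondary point I would have to address is the degenerate case $\psi\equiv it_0$ (for instance $\varphi(s)=s$), where $g\equiv 0$ and the construction collapses to $\varphi_1=\varphi_0$; this is precisely the isolated situation, so the non-isolation conclusion is understood for $\psi\not\equiv it_0$, the only case in which $g^m$ provides a genuine perturbation.
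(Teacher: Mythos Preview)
Your argument is correct and follows essentially the same route as the paper: perturb by $\veps(\psi-it_0)^m$, use the contact hypothesis together with the boundedness of $\psi$ to get $|(\psi-it_0)^m|\lesssim\A\psi$, and feed this into Theorem~\ref{thm:samecomponent}. The only differences are cosmetic---you keep $t_0$ explicit rather than reducing to $t_0=0$, you take $m=\lceil k\rceil$ (which is a mild generalization if $k$ is not assumed integral), and you spell out the degenerate case $\psi\equiv it_0$ that the paper leaves implicit.
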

\begin{proof}
Without loss of generality, we may assume that $t_0=0$.
Let us denote $\varphi_0=\varphi$ and let, for $\delta>0$, $\varphi_1=c_0s+\psi+\delta\psi^k$.
We claim that, provided $\delta$ is small enough, $\varphi_0$ and $\varphi_1$ satisfy the assumptions of Theorem \ref{thm:samecomponent}.
For $s\in\CC_+$, let us write
$$\A \varphi_1(s)=c_0 \A(s)+\A \psi(s)+\delta \A(\psi^k(s)).$$
Now,
$$ \A(\psi^k(s))=\sum_{j=0}^k a_j \big( \A\psi(s)\big)^j \big( \B\psi(s) \big)^{k-j}$$
for some coefficients $a_j$. Using our assumption on $\B \psi$ and the boundedness of $\A \psi$, there exist  $C_0>0$ such that, for all $s\in\mathbb C_+$,
$$ \A(\psi^k(s)) \geq -C_0\A(\psi(s)).$$
We set $\delta=1/2C_0$. We get for all $s\in\CC_+$
$$\A \varphi_1(s)\geq c_0 \A(s)+ \frac 12\A \psi(s)\geq \frac 12 \A \varphi_0(s).$$
Therefore $\varphi_1\in\mathcal G$. Furthermore, for all $s\in\CC_+$, with involved constants independent of $s$,
\begin{align*}
|\varphi_0(s)-\varphi_1(s)|^2&=\delta^2 |\psi(s)|^{2k}\\
&\lesssim \big( \A\psi(s)\big)^{2k}+\big( \B\psi(s)\big)^{2k}\\
&\lesssim \big( \A\psi(s)\big)^2\\
&\lesssim \big(\min(\A \varphi_0(s),\A \varphi_1(s))\big)^2,
\end{align*}
where, again, we used our assumption on $\B \psi$ and the boundedness of $\A \psi$.
\end{proof}

\begin{corollary}
Let $\varphi_0(s)=c_0s+c_1+\sum_{j=1}^d c_{q_j}q_j^{-s}$ be a linear symbol with $\cha(\varphi_0)\geq 1$. Then $\cphiz$ is not isolated
in $\mathcal C(\mathcal H)$. Moreover, if $\varphi_0$ has unrestricted range, there exists a continuous map
$\lambda\in[0,\delta]\mapsto \varphi_\lambda\in\mathcal G$ such that, for $\lambda\neq \lambda'$, $C_{\varphi_\lambda}-C_{\varphi_{\lambda'}}$ is never compact.
\end{corollary}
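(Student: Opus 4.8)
The plan is to read off both statements from the machinery already built: the non-isolation from the sufficient condition of Corollary~\ref{cor:samecomponent}, and the non-compactness of differences from the second-order lower bound of Theorem~\ref{thm:essentialnormcomposition}. Write $\psi=c_1+\sum_{j=1}^d c_{q_j}q_j^{-s}$, let $\mathcal Q_0$ be the finite set of primes dividing some $q_j$, and let $\mathcal B\psi(z)=c_1+\sum_j c_{q_j}z^{\alpha(j)}$ be the Bohr lift on $\overline{\DD^{\mathcal Q_0}}$, where $q_j=\prod_l p_l^{\alpha_l(j)}$ and $z^{\alpha(j)}=\prod_l z_l^{\alpha_l(j)}$. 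Since $\varphi_0\in\mathcal G$ has characteristic $c_0\geq 1$, the discussion preceding Lemma~\ref{lem:prk} gives $\A\mathcal B\psi\geq 0$ on $\overline{\DD^{\mathcal Q_0}}$; its minimum $0$ is attained at the unique $\xi\in\TT^{\mathcal Q_0}$ with $c_{q_j}\xi^{\alpha(j)}=-|c_{q_j}|$ for every $j$, and in particular $\A c_1\geq\sum_j|c_{q_j}|$.

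For the non-isolation assertion (which needs no range hypothesis) I would verify the hypotheses of Corollary~\ref{cor:samecomponent} with $t_0=\B c_1$ and $k=2$. Boundedness of $\psi$ and $\psi'$ on $\CC_+$ is immediate, so the only point is the inequality $|\B\psi(s)-t_0|\leq C(\A\psi(s))^{1/2}$, which I would prove on the polydisc. Writing $c_{q_j}z^{\alpha(j)}=-|c_{q_j}|\rho_j e^{i\phi_j}$ with $\rho_j=|z^{\alpha(j)}|\in[0,1]$, one has
\[ \A\mathcal B\psi(z)=\Big(\A c_1-\sum_j|c_{q_j}|\Big)+\sum_j|c_{q_j}|\,(1-\rho_j\cos\phi_j),\qquad \B\mathcal B\psi(z)-\B c_1=-\sum_j|c_{q_j}|\,\rho_j\sin\phi_j. \]
The elementary identity $2(1-\rho\cos\phi)-\rho^2\sin^2\phi=(1-\rho\cos\phi)^2+(1-\rho^2)\geq 0$ gives $\rho_j^2\sin^2\phi_j\leq 2(1-\rho_j\cos\phi_j)$, so by Cauchy--Schwarz $|\B\mathcal B\psi(z)-\B c_1|\leq \sqrt{2\sum_j|c_{q_j}|}\,(\A\mathcal B\psi(z))^{1/2}$ for all $z\in\overline{\DD^{\mathcal Q_0}}$. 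Restricting to $z=(q_l^{-s})_l$ yields the required bound, and Corollary~\ref{cor:samecomponent} shows that $\cphiz$ is not isolated.

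For the second assertion I would first normalise by a vertical translation. As $C_{\varphi_0}$ is unitarily equivalent to $C_{(\varphi_0)_\tau}$, and this equivalence preserves both compactness of differences and continuity of arcs, I may replace $\varphi_0$ by a vertical translate and thereby assume $\B c_1=0$. Unrestricted range then reads $c_1=\sum_j|c_{q_j}|>0$, and at the minimiser $\xi$ one gets the crucial normalisation $\mathcal B\psi(\xi)=c_1-\sum_j|c_{q_j}|=0$. I then define the arc
\[ \varphi_\lambda=c_0 s+\psi+\lambda\psi^2,\qquad \lambda\in[0,\delta], \]
with $\delta$ small. The computation in the proof of Corollary~\ref{cor:samecomponent} (taken with $k=2$) shows $\varphi_\lambda\in\mathcal G$ for small $\delta$, and Theorem~\ref{thm:samecomponent} gives $\|C_{\varphi_\lambda}-C_{\varphi_{\lambda'}}\|\leq A|\lambda-\lambda'|$, so $\lambda\mapsto C_{\varphi_\lambda}$ is continuous.

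It remains to prove that $C_{\varphi_\lambda}-C_{\varphi_{\lambda'}}$ is non-compact for $\lambda\neq\lambda'$, which is where the real work lies; I would deduce it from Theorem~\ref{thm:essentialnormcomposition} applied to the pair $(\varphi_\lambda,\varphi_{\lambda'})$ with coefficients $(1,-1)$ at the boundary point $z=\xi\in\Gamma(\mathcal B\psi_\lambda)$ (note $\mathcal B\psi_\lambda(\xi)=\mathcal B\psi(\xi)+\lambda\mathcal B\psi(\xi)^2=0$, that each $\psi_\lambda=\psi+\lambda\psi^2$ is a Dirichlet polynomial with smooth Bohr lift, and that $\cha(\varphi_\lambda)=c_0>0$). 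Since $\mathcal B\psi_\lambda=\mathcal B\psi+\lambda(\mathcal B\psi)^2$ and $\mathcal B\psi(\xi)=0$, the value and all first-order derivatives at $\xi$ are independent of $\lambda$, while the product rule at the zero $\xi$ gives
\[ \partial_l\partial_m\mathcal B\psi_\lambda(\xi)=\partial_l\partial_m\mathcal B\psi(\xi)+2\lambda\,\partial_l\mathcal B\psi(\xi)\,\partial_m\mathcal B\psi(\xi). \]
The decisive point is that $\partial_{z_l}\mathcal B\psi(\xi)=-\xi_l^{-1}\sum_j\alpha_l(j)|c_{q_j}|\neq 0$ for every prime $p_l\in\mathcal Q_0$, because some $\alpha_l(j)\geq 1$. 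Hence for $\lambda\neq\lambda'$ the lifts $\mathcal B\psi_\lambda$ and $\mathcal B\psi_{\lambda'}$ do \emph{not} share second order boundary data at $\xi$, so the set $\mathcal J$ of Theorem~\ref{thm:essentialnormcomposition} reduces to $\{\varphi_\lambda\}$; were the difference compact, the theorem would force $\sum_{j\in\mathcal J}\lambda_j=1=0$, a contradiction. I expect the main obstacle to be exactly this second-order analysis: the first-order data (hence Theorem~\ref{thm:essentialnormdifference} and the angular-derivative criterion) are blind to $\lambda$, and one genuinely needs the quadratic perturbation $\lambda\psi^2$ together with the nonvanishing gradient of $\mathcal B\psi$ at the tangency point $\xi$ to separate the operators at second order.
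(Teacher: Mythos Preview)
Your argument is correct and follows the paper's route: non-isolation via Corollary~\ref{cor:samecomponent} with $k=2$, the explicit arc $\varphi_\lambda=c_0s+\psi+\lambda\psi^2$, and non-compactness of differences via the second-order boundary data. Where the paper concludes by a one-line citation of Theorem~\ref{thm:lcpolynomial}, you instead invoke Theorem~\ref{thm:essentialnormcomposition} directly and carry out the second-order computation at $\xi$; this is in fact cleaner, since Theorem~\ref{thm:lcpolynomial} as stated requires degree~$\leq 2$ and therefore only literally covers the case where all $q_j$ are prime, whereas your route works for arbitrary multiplicatively independent $q_j$. One harmless slip: the point $\xi\in\Gamma(\mathcal B\psi)$ need not be unique (take $d=1$, $q_1=4$, or $d=1$, $q_1=6$), but you only need one such point for Theorem~\ref{thm:essentialnormcomposition}, so the argument is unaffected.
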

\begin{proof}
We only have to prove the statement when $\cphiz$ is not compact, namely when $\varphi_0$ has unrestricted range. In that case, we may assume that $c_{q_j}=-|c_{q_j}|$ and $c_1=\sum_{j=1}^d |c_{q_j}|$.
Since $s\mapsto q_j^{-s}$ maps $\CC_+$ into $\DD$, $\psi_0$ maps $\CC_+$ into some disk in $\CC_+$ which is internally tangent to the imaginary axis at $0$.
Therefore the assumptions of Corollary \ref{cor:samecomponent} are satisfied with $k=2$. Moreover, a look at the
proof of Theorem \ref{thm:samecomponent} shows that we can choose a continuous path $\lambda\mapsto\varphi_\lambda$
given by
$$\varphi_\lambda(s)=c_0s+c_1+\sum_{j=1}^d c_{q_j}q_j^{-s}+\lambda\left(c_1+\sum_{j=1}^d c_{q_j}q_j^{-s}\right)^2.$$
By Theorem \ref{thm:lcpolynomial}, for $\lambda\neq \lambda'$, $C_{\varphi_\lambda}-C_{\varphi_{\lambda'}}$ is never compact.
\end{proof}

In particular, the Shapiro-Sundberg conjecture keeps false in $\mathcal C(\mathcal H)$.
We now show the optimality of Theorem \ref{thm:lcpolynomial}, by proving that we cannot extend it to polynomial symbols of degree 3.

\begin{corollary}
Let $\varphi_0=c_0s+\psi_0\in\mathcal G$ with positive characteristic. Assume that $\psi_0$ and $\psi'_0$ are bounded on $\CC_+$ and that there exist $C>0$, $k\geq 1$, $t_0\in\RR$ such that, for all $s\in\CC_+$,
$$|\B (\psi_0(s))-t_0|\leq C \big( \A(\psi_0(s))\big)^{1/k}.$$
Let $\delta>0$ and $\varphi_1=c_0s +\psi_0+\delta\psi_0^{k+1}$. Then, for $\delta>0$ sufficiently small, $\varphi_1\in\mathcal G$ and $C_{\varphi_0}-C_{\varphi_1}$ is compact.
\end{corollary}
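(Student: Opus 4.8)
The plan is to verify that $\varphi_0$ and $\varphi_1$ satisfy the full hypotheses of Theorem~\ref{thm:samecomponent}, including the two supplementary conditions of its second part, whence the compactness of $\cphiz-\cphiu$ follows directly. Exactly as in the proof of Corollary~\ref{cor:samecomponent}, I would first normalize $t_0=0$, and set aside the trivial case in which $\psi_0$ is constant (with $t_0=0$ the hypothesis then forces either $\psi_0\equiv 0$, so $\varphi_1=\varphi_0$, or $\A\psi_0>0$, so $\varphi_0$ has restricted range and both operators are compact). Writing $x=\A\psi_0(s)$ and $y=\B\psi_0(s)$, the standing assumption becomes $|y|\leq Cx^{1/k}$ with $0<x\leq C$, from which I extract the single estimate that drives the whole argument:
\[ |\psi_0(s)|^2=x^2+y^2\lesssim x^{2/k}, \qquad\text{hence}\qquad |\psi_0(s)|\lesssim \big(\A\psi_0(s)\big)^{1/k}. \]

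I would then check that $\varphi_1\in\mathcal G$ for $\delta$ small. Uniform convergence of $\psi_1=\psi_0+\delta\psi_0^{k+1}$ on every $\CC_\veps$ is clear, since the bounded uniformly convergent Dirichlet series form an algebra. For the range condition, expand
\[ \A\big(\psi_0^{k+1}\big)=\sum_{m\ \mathrm{even}}\binom{k+1}{m}(-1)^{m/2}x^{\,k+1-m}y^{m}, \]
and bound each monomial via $|y|\leq Cx^{1/k}$: the exponent of $x$ in $x^{k+1-m}|y|^m$ equals $k+1-m(k-1)/k$, which for even $0\leq m\leq k+1$ is always at least $1+\tfrac1k>1$. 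Hence $|\A(\psi_0^{k+1})|\lesssim x^{1+1/k}$, so that
\[ \A\psi_1=x+\delta\,\A\big(\psi_0^{k+1}\big)\geq x\big(1-\delta C_1 x^{1/k}\big)\geq\tfrac12 x \]
once $\delta\leq (2C_1C^{1/k})^{-1}$. Consequently $\A\varphi_1\geq\tfrac12\A\varphi_0>0$ on $\CC_0$, giving $\varphi_1\in\mathcal G$ with $\cha(\varphi_1)=c_0$.

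It remains to feed the estimates into Theorem~\ref{thm:samecomponent}. As $\varphi_0-\varphi_1=-\delta\psi_0^{k+1}$, the bound on $|\psi_0|$ gives
\[ |\varphi_0-\varphi_1|=\delta|\psi_0|^{k+1}\lesssim \delta\,x^{1/k}\cdot x, \qquad |\varphi_0'-\varphi_1'|=\delta(k+1)|\psi_0|^{k}|\psi_0'|\lesssim \delta\,x, \]
using $|\psi_0'|\leq C$. Since $\tfrac12 x\leq\min(\A\varphi_0,\A\varphi_1)\leq\A\varphi_0$, the three bullet hypotheses hold globally---the first because $|\varphi_0-\varphi_1|\lesssim x\lesssim\min(\A\varphi_0,\A\varphi_1)$, the other two because $|\psi_1|$ and $|\varphi_0'-\varphi_1'|$ are bounded---while along any sequence with $\min(\A\varphi_0,\A\varphi_1)\to0$ one has $x\to0$, whence $|\varphi_0-\varphi_1|/\min(\A\varphi_0,\A\varphi_1)\lesssim\delta x^{1/k}\to 0$ and $|\varphi_0'-\varphi_1'|\to0$. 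These are precisely the two supplementary conditions, so Theorem~\ref{thm:samecomponent} yields the compactness of $\cphiz-\cphiu$.

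The heart of the matter, and the only place demanding care, is the homogeneity bookkeeping: everything turns on the gain of one extra power over Corollary~\ref{cor:samecomponent}. There the perturbation $\delta\psi_0^{k}$ produces only $|\varphi_0-\varphi_1|\lesssim x=O\big(\min(\A\varphi_0,\A\varphi_1)\big)$, enough to place the operators in the same component but not to make their difference compact; raising the exponent to $k+1$ inserts the extra factor $x^{1/k}$ and upgrades this $O$ into an $o$, which is exactly what the compactness clause of Theorem~\ref{thm:samecomponent} demands. Thus the main step to execute carefully is the verification that the smallest $x$-exponent appearing in $\A(\psi_0^{k+1})$ and in $|\psi_0|^{k+1}$ is genuinely strictly larger than $1$, uniformly in the parity of $k$.
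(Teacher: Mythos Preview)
Your proof is correct and follows essentially the same route as the paper's: you verify all the hypotheses of Theorem~\ref{thm:samecomponent}, including the two supplementary ``little-$o$'' conditions, by exploiting that the extra power in $\delta\psi_0^{k+1}$ (versus $\delta\psi_0^{k}$ in Corollary~\ref{cor:samecomponent}) produces the additional factor $x^{1/k}$ in $|\varphi_0-\varphi_1|\lesssim x^{1+1/k}$. Your exponent bookkeeping for $\A(\psi_0^{k+1})$ and the clean preliminary bound $|\psi_0|\lesssim(\A\psi_0)^{1/k}$ are more explicit than the paper's, but the argument is the same.
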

\begin{proof}
The proof follows that of Corollary \ref{cor:samecomponent}. With the same computations, still assuming $t_0=0$, one now gets
\[ |\varphi_0(s)-\varphi_1(s)|^2\lesssim \big(\min(\A\varphi_0(s),\A\varphi_1(s))\big)^{2(k+1)/k} \]
which is enough to ensure that $|\varphi_0-\varphi_1|=o\big(\min (\A\varphi_0,\A\varphi_1)\big)$ as
$\min (\A\varphi_0,\A\varphi_1)$ tends to zero. We conclude by applying Theorem \ref{thm:samecomponent} since $\varphi_1'-\varphi_0'=\delta (k+1)\psi_0'\psi_0^k$ and since the assumptions imply that $\psi_0$ tends to zero as $\min(\A\varphi_0,\A\varphi_1)$ tend to zero.
\end{proof}

\begin{example}
Let $\varphi_0(s)=s+(1-2^{-s})$ and $\varphi_1(s)=s+(1-2^{-s})+\delta (1-2^{-s})^3$ for some sufficiently small $\delta>0$. Then $C_{\varphi_1}-C_{\varphi_0}$ is compact whereas neither $C_{\varphi_0}$ nor $C_{\varphi_1}$ is compact.
In particular, Theorem \ref{thm:lcpolynomial} is optimal.
\end{example}

\subsection{The case of zero characteristic}

\subsubsection{Weighted composition operators from Bergman spaces to Hardy spaces}

In this section, we give a sufficient condition for a weighted composition operator $u C_\varphi$ to
induce a bounded, respectively compact, operator from the Bergman space $A^2_1(\mathbb D)$ to the Hardy space $H^2(\mathbb D)$. Here $A^2_1(\mathbb D)$ is the set of analytic functions $f$ on the disk such that
\[ \|f\|_{A_1^2(\mathbb D)}^2:=\int_{\mathbb D} |f(z)|^2 (1-|z|^2)dz<+\infty.\]
The continuity of $uC_\varphi:A^2_1(\mathbb D)\to H^2(\mathbb D)$ has been characterized in terms of Carleson measures
in \cite{MT} or using integral operators in \cite{Su}. Inspired by \cite{rn}, we shall deduce from the results of \cite{MT} a sufficient condition which will be easy to testify. Let us introduce some terminology.
 For a point $\xi$ in the boundary of the unit disk and $\delta>0$, we define the Carleson window $S(\xi,\delta)=\{z\in\mathbb D : |z-\xi|<\delta\}$.

Let $\varphi$ be an analytic self-map of $\mathbb D$ and let $u$ be a bounded analytic function on $\mathbb D$. We define a positive Borel measure $|u|^2 m_1\varphi^{-1}$ on the closed unit disk $\overline {\mathbb D}$ by assigning to each Borel set $E$ the value:
\[ |u|^2 m_1\varphi^{-1}(E)=\int_{{\varphi}^{-1}(E)}|u(\xi)|^2 dm_1(\xi), \]
where we still denote by $\varphi$ and $u$ the radial limit functions of $\varphi$ and $u$ and where $m_1$ is the normalized Lebesgue measure on $\TT$.

The following lemma is \cite[Proposition 2]{MT}.

\begin{lemma}\label{lem:mt}
Let $\varphi$ be an analytic self-map of $\mathbb D$ and let $u$ be a bounded analytic function on $\mathbb D$ not identically equal to zero. Then $uC_\varphi:A^2_1\to H^2$ is bounded if and only if $\varphi$ has radial limits of modulus strictly less than $1$ almost everywhere and
\[ \sup_{S(\xi,\delta)} \frac{|u|^2 m_1\varphi^{-1}(S(\xi,\delta))}{\delta^3}<+\infty. \]
Moreover
\[  \|u C_{\varphi}\|_{A^2_1\to H^2} \approx \left( \sup_{S(\xi,\delta)} \frac{|u|^2 m_1\varphi^{-1}(S(\xi,\delta))}{\delta^3} \right)^{1/2}. \]
If $\sup_{S(\xi,\delta)}|u|^2 m_1\varphi^{-1}(S(\xi,\delta))/\delta^3$ goes to zero uniformly in $\xi$ as $\delta$ tends to zero, then $uC_\phi$ is a compact operator from $A_1^2$ to $H^2$.
\end{lemma}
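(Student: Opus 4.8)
The plan is to recognize the boundedness of $uC_\varphi\colon A^2_1\to H^2$ as a Carleson embedding problem for the weighted Bergman space, transported by the pullback measure $\mu:=|u|^2m_1\varphi^{-1}$. First I would compute the $H^2$-norm of $uC_\varphi f=u\cdot(f\circ\varphi)$ through boundary values: for $f$ in a dense class (polynomials, or bounded analytic functions), $u\cdot(f\circ\varphi)$ lies in $H^2$ and, denoting still by $u$ and $\varphi$ the radial limit functions,
\[ \|uC_\varphi f\|_{H^2}^2=\int_{\TT}|u(\xi)|^2\,|f(\varphi(\xi))|^2\,dm_1(\xi)=\int_{\overline{\mathbb D}}|f(w)|^2\,d\mu(w), \]
the second equality being the nonunivalent change of variables that pushes $|u|^2m_1$ forward by $\varphi$. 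Thus $\|uC_\varphi\|_{A^2_1\to H^2}$ equals the norm of the embedding $A^2_1\hookrightarrow L^2(\mu)$.

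The radial-limit hypothesis enters twice. If $\varphi$ has radial limits of modulus strictly less than $1$ almost everywhere, then $\varphi(\xi)\in\mathbb D$ for a.e.\ $\xi$, so $\mu$ carries no mass on $\TT$ and $f\circ\varphi$ is defined a.e.\ for every $f\in A^2_1$; conversely, were $|\varphi|=1$ on a set of positive measure, one could produce $f\in A^2_1$ with $f\circ\varphi\notin H^2$, which shows this condition is necessary for boundedness.

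The core is then the Carleson embedding theorem for $A^2_1$: a finite positive Borel measure $\mu$ on $\mathbb D$ satisfies $\int_{\mathbb D}|f|^2\,d\mu\lesssim\|f\|_{A^2_1}^2$ for all $f$ if and only if
\[ \sup_{\xi,\delta}\frac{\mu(S(\xi,\delta))}{\delta^3}<+\infty, \]
the exponent $3=2+1$ reflecting that $\int_{S(\xi,\delta)}(1-|z|^2)\,dA(z)\approx\delta^3$. I would establish necessity by testing on the normalized reproducing kernels of $A^2_1$, which concentrate on Carleson windows, and sufficiency by a Whitney/dyadic decomposition of $\mathbb D$ into boxes of comparable Bergman size, reducing the bound to the geometric condition. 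Substituting $\mu=|u|^2m_1\varphi^{-1}$ yields both the stated characterization and, after taking square roots, the two-sided norm comparison.

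Finally, for compactness I would invoke the vanishing version: the embedding $A^2_1\hookrightarrow L^2(\mu)$ is compact once $\mu(S(\xi,\delta))/\delta^3\to0$ uniformly in $\xi$ as $\delta\to0$. Given a weak-null sequence $(f_n)$ in $A^2_1$, which is bounded and tends to zero uniformly on compact subsets of $\mathbb D$, I would split $\int|f_n|^2\,d\mu$ over a central disk $\{|z|\leq r\}$, where uniform convergence and the finiteness of $\mu$ force smallness for large $n$, and over the collar $\{|z|>r\}$, where the little-oh Carleson condition bounds the contribution by $\veps\|f_n\|_{A^2_1}^2$ once $r$ is close to $1$; this gives $\|uC_\varphi f_n\|_{H^2}\to0$, hence compactness. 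The main obstacle I anticipate is making the boundary change of variables rigorous---justifying that $u\cdot(f\circ\varphi)$ truly belongs to $H^2$ with boundary values given a.e.\ by the product of the radial limits of $u$ and of $f\circ\varphi$ (handling measurability and the almost-everywhere-defined composition), and then passing from the dense class to all of $A^2_1$ by a limiting argument combining Fatou's lemma in one direction with the embedding bound in the other.
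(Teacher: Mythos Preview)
The paper does not prove this lemma: it is quoted verbatim as \cite[Proposition~2]{MT} and used without argument. Your outline --- rewriting $\|uC_\varphi f\|_{H^2}^2$ as $\int_{\overline{\mathbb D}}|f|^2\,d\mu$ for the pushforward $\mu=|u|^2 m_1\varphi^{-1}$, then invoking the Carleson embedding theorem for $A^2_1$ (with exponent $3=2+\alpha+1$ for $\alpha=0$, necessity via reproducing kernels, sufficiency via a dyadic covering) and its vanishing version for compactness --- is the standard route and is correct; it is presumably what underlies the cited result, so there is nothing further to compare.
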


\begin{remark}
In the statement of \cite{MT}, the exponent $1/2$ in the last display does not appear. Nevertheless a look at the proof as well as homogeneity considerations show that it has to appear.
\end{remark}

\begin{corollary}\label{thm:weighted}
Let $\varphi$ be an analytic self-map of $\mathbb D$ which has radial limits of modulus strictly less than $1$ almost everywhere and let $u$ be a bounded analytic function on $\mathbb D$ not identically equal to zero.  Then
$$\|u C_{\varphi}\|_{A^2_1\to H^2}\lesssim\left(\frac{1+|\varphi(0)|}{1-|\varphi(0)|}\right)^{1/2}
\sup_{\xi\in\mathbb T}  \frac{|u(\xi)|}{1-|\varphi(\xi)|}.$$
If moreover
\[ \sup_{|\varphi(\xi)|\geq 1-\delta}  \frac{|u(\xi)|}{1-|\varphi(\xi)|}\xrightarrow{\delta\to 0}0\]
then $uC_\varphi$ is a compact operator from $A_1^2$ to $H^2$.
\end{corollary}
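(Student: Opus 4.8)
The plan is to reduce everything to the Carleson-window criterion of Lemma \ref{lem:mt}, and to exploit the elementary fact that on the preimage of a Carleson window the modulus of $\varphi$ is forced close to $1$, where by hypothesis $|u|$ is small. Set $N=\sup_{\xi\in\mathbb T}\frac{|u(\xi)|}{1-|\varphi(\xi)|}$ (if $N=+\infty$ there is nothing to prove). By Lemma \ref{lem:mt} it then suffices to estimate $\sup_{\xi,\delta}\frac{|u|^2 m_1\varphi^{-1}(S(\xi,\delta))}{\delta^3}$, and for the compactness statement to show that this quantity tends to $0$ uniformly in $\xi$ as $\delta\to 0$.

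First I would record the geometric observation. If $\eta\in\mathbb T$ satisfies $\varphi(\eta)\in S(\xi,\delta)$ with $\xi\in\mathbb T$, then $1-|\varphi(\eta)|=|\xi|-|\varphi(\eta)|\leq |\xi-\varphi(\eta)|<\delta$, so that in particular $|\varphi(\eta)|>1-\delta$. Consequently $|u(\eta)|\leq N\,(1-|\varphi(\eta)|)<N\delta$ on $\varphi^{-1}(S(\xi,\delta))$, and integrating the square over this set gives
\[ |u|^2 m_1\varphi^{-1}(S(\xi,\delta))\leq N^2\delta^2\, m_1\big(\varphi^{-1}(S(\xi,\delta))\big). \]

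The one nontrivial input, and the main obstacle, is the Carleson estimate for the pullback measure with the sharp dependence on $\varphi(0)$, namely
\[ m_1\big(\varphi^{-1}(S(\xi,\delta))\big)\lesssim \frac{1+|\varphi(0)|}{1-|\varphi(0)|}\,\delta. \]
This is classical: testing the boundedness of $C_\varphi$ on $H^2(\mathbb D)$ against the normalized reproducing kernels $f_a(z)=(1-|a|^2)^{1/2}/(1-\bar a z)$ with $a=(1-\delta)\xi$, one has $|f_a(w)|^2\gtrsim \delta^{-1}$ for $w\in S(\xi,\delta)$, whence
\[ \delta^{-1} m_1\varphi^{-1}(S(\xi,\delta))\lesssim \int_{\mathbb D}|f_a|^2\, dm_1\varphi^{-1}=\|C_\varphi f_a\|_{H^2}^2\leq \|C_\varphi\|_{H^2}^2\leq \frac{1+|\varphi(0)|}{1-|\varphi(0)|}; \]
alternatively one may simply quote the standard norm bound for composition operators on $H^2(\mathbb D)$. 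Combining the two displays yields $\frac{|u|^2 m_1\varphi^{-1}(S(\xi,\delta))}{\delta^3}\lesssim N^2\,\frac{1+|\varphi(0)|}{1-|\varphi(0)|}$ uniformly in $\xi$ and $\delta$, and the boundedness assertion follows from the norm comparison in Lemma \ref{lem:mt} after taking square roots.

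For compactness I would run the identical computation with $N$ replaced by the localized quantity $N(\delta)=\sup_{|\varphi(\xi)|\geq 1-\delta}\frac{|u(\xi)|}{1-|\varphi(\xi)|}$. Since every $\eta\in\varphi^{-1}(S(\xi,\delta))$ satisfies $|\varphi(\eta)|>1-\delta$, the bound $|u(\eta)|\leq N(\delta)\,(1-|\varphi(\eta)|)$ holds there, giving $\frac{|u|^2 m_1\varphi^{-1}(S(\xi,\delta))}{\delta^3}\lesssim N(\delta)^2\,\frac{1+|\varphi(0)|}{1-|\varphi(0)|}$. The hypothesis forces $N(\delta)\to 0$ as $\delta\to 0$, so this supremum goes to $0$ uniformly in $\xi$, and the compactness criterion of Lemma \ref{lem:mt} applies. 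All steps beyond the explicit window bound for $m_1\varphi^{-1}$ are elementary, so that is where the essential content lies.
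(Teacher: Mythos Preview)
Your proof is correct and follows essentially the same route as the paper: bound $|u|$ on $\varphi^{-1}(S(\xi,\delta))$ by $N\,(1-|\varphi|)\leq N\delta$, and then invoke the Carleson bound $m_1(\varphi^{-1}(S(\xi,\delta)))\lesssim \frac{1+|\varphi(0)|}{1-|\varphi(0)|}\,\delta$ coming from the boundedness of $C_\varphi$ on $H^2$. Your compactness argument, replacing $N$ by the localized $N(\delta)$, is exactly the intended one (the paper does not spell it out).
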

\begin{proof}
We simply write, for $\xi\in\mathbb T$ and $\delta>0$,
\begin{align*}
 \frac{|u|^2 m_1\varphi^{-1}(S(\xi,\delta))}{\delta^3}&= \frac 1{\delta^3} \int_{{\varphi}^{-1}(S(\xi,\delta))} |u|^2 dm_1\\
&\leq \left(\sup_{\xi_0\in\mathbb T}  \frac{|u(\xi_0)|}{1-|\varphi(\xi_0)|}\right)^2 \times  \frac 1{\delta^3} \int_{{\varphi}^{-1}(S(\xi,\delta))} (1-|\varphi|)^2 dm_1\\
&\leq \left(\sup_{\xi_0\in\mathbb T}  \frac{|u(\xi_0)|}{1-|\varphi(\xi_0)|}\right)^2 \times  \frac 1{\delta} m_1\big({\varphi}^{-1}(S(\xi,\delta))\big).
\end{align*}
It follows from the Carleson measure characterisation of the continuity of composition operators
on $H^2(\mathbb D)$ that
\(  \frac 1{\delta} m_1\big({\varphi}^{-1}(S(\xi,\delta))\big) \lesssim \|C_{\varphi}\|_{H^2\to H^2}^2
\leq \frac{1+|\varphi(0)|}{1-|\varphi(0)|}. \)
\end{proof}

We translate this statement to get a result on the invariant Hardy space $H^2_i(\CC_+)$.

\begin{lemma}\label{lem:weightedhalfplane}
Let $\phi$ be an analytic self-map of $\mathbb C_+$ which has radial limits of positive real part almost everywhere and let $U$ be a bounded analytic function on $\mathbb C_+$ not identically equal to zero.  Then
for all $F\in H^2_i(\mathbb C_+)$,
\[ \| U\cdot F'\circ \phi\|_{H^2_i(\CC_+)} \lesssim \left( \frac{ |1+\phi(1)|^2}{\A \phi(1) } \right)^{1/2} \sup_{t\in\mathbb R}
\frac{|U(it)|\times |1+\phi(it)|^2}{\A \phi(it)}\|F\|_{H^2_i(\CC_+)}. \]
If, moreover,
\[ \sup\left\{\frac{|U(it)|\times |1+\phi(it)|^2}{\A \phi(it)}:\ \frac{\A\phi(it)}{|1+\phi(it)|^2}\leq\delta\right\}\xrightarrow{\delta\to 0}0\]
then the operator $F\mapsto U\cdot F'\circ\phi$ is a compact operator of $H^2_i(\CC_+)$.
\end{lemma}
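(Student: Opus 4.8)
The plan is to push the whole problem onto the unit disk through the Cayley transform and then invoke Corollary~\ref{thm:weighted}. Write $\omega(s)=\frac{s-1}{s+1}$, which maps $\CC_+$ conformally onto $\DD$ with inverse $\omega^{-1}(z)=\frac{1+z}{1-z}$, and note that $\omega^{-1}(0)=1$. Since $\frac{dt}{\pi(1+t^2)}$ is exactly the harmonic measure of $\CC_+$ at the point $1$, its push-forward under $\omega$ is the normalized Lebesgue measure on $\TT$; consequently $V\colon H^2(\DD)\to H^2_i(\CC_+)$, $Vf=f\circ\omega$, is a surjective isometry. First I would record the two elementary identities $\omega'(w)=\frac{2}{(w+1)^2}$ and $1-|\omega(w)|^2=\frac{4\A w}{|w+1|^2}$ for $w\in\CC_+$, which will carry out all the later translations.

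Next I would compute the operator $T\colon F\mapsto U\cdot(F'\circ\phi)$ in the disk picture. Writing $F=f\circ\omega$ and using the chain rule $F'=(f'\circ\omega)\,\omega'$, one gets $(V^{-1}TVf)(z)=u(z)\,f'(\Phi(z))$, where $\Phi=\omega\circ\phi\circ\omega^{-1}$ is an analytic self-map of $\DD$ and $u=(U\circ\omega^{-1})\cdot(\omega'\circ\phi\circ\omega^{-1})$. The function $u$ is bounded and analytic on $\DD$ and not identically zero (since $\A\phi>0$ keeps $\omega'\circ\phi$ bounded and $U\not\equiv0$), and the hypothesis that $\phi$ has radial limits of positive real part a.e.\ translates into $\Phi$ having radial limits of modulus strictly less than $1$ a.e. Using the Littlewood--Paley equivalence $\|f'\|_{A^2_1(\DD)}\approx\|f\|_{H^2(\DD)}$ (up to the constant term $f(0)$), the operator factors as $T=V\,(uC_\Phi)\,D\,V^{-1}$, where $D\colon H^2(\DD)\to A^2_1(\DD)$ is the bounded differentiation map and $uC_\Phi\colon A^2_1(\DD)\to H^2(\DD)$ is precisely the weighted composition operator handled by Corollary~\ref{thm:weighted}. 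Thus $\|T\|\lesssim\|uC_\Phi\|_{A^2_1\to H^2}$, and compactness of $uC_\Phi$ forces compactness of $T$.

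It then remains to translate the two quantities in Corollary~\ref{thm:weighted} back to the half-plane. Parametrizing boundary points as $\xi=\omega(it)$ and applying the identities above with $w=\phi(it)$ gives $|u(\xi)|=\frac{2|U(it)|}{|1+\phi(it)|^2}$ and $1-|\Phi(\xi)|\approx 1-|\Phi(\xi)|^2=\frac{4\A\phi(it)}{|1+\phi(it)|^2}$, so that $\frac{|u(\xi)|}{1-|\Phi(\xi)|}\approx\frac{|U(it)|}{\A\phi(it)}$; likewise $\Phi(0)=\omega(\phi(1))$ yields $\frac{1+|\Phi(0)|}{1-|\Phi(0)|}\approx\frac{|1+\phi(1)|^2}{\A\phi(1)}$. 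Plugging these into Corollary~\ref{thm:weighted} already gives the boundedness estimate, in fact with the sharper supremum $\sup_t\frac{|U(it)|}{\A\phi(it)}$; the stated form follows a fortiori since $|1+\phi(it)|^2\geq 1$ for all $t$. For compactness, the condition $|\Phi(\xi)|\geq1-\delta$ corresponds to $\frac{\A\phi(it)}{|1+\phi(it)|^2}\lesssim\delta$, and the stated hypothesis makes $\sup\{\frac{|u(\xi)|}{1-|\Phi(\xi)|}:|\Phi(\xi)|\geq1-\delta\}$ tend to $0$, so the compactness criterion of Corollary~\ref{thm:weighted} applies.

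The main obstacle, modest as it is, lies in the bookkeeping of the Jacobian factor $\omega'$ that the chain rule produces when differentiating $F=f\circ\omega$: it must be absorbed into the disk weight $u$ so that the resulting operator is exactly of the form $uC_\Phi$, and one must check that the two identities for $\omega'$ and $1-|\omega(\cdot)|^2$ combine to cancel the $|1+\phi|^2$ factors in the expected way. Verifying the harmonic-measure statement that makes $V$ an isometry, and the Littlewood--Paley equivalence $\|f'\|_{A^2_1}\approx\|f\|_{H^2}$, are the only other points needing care, and both are classical.
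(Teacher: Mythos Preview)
Your proof is correct and follows essentially the same approach as the paper: conjugate by the Cayley transform to reduce to the disk, factor through the Littlewood--Paley isomorphism $H^2(\DD)\to A^2_1(\DD)$, and apply Corollary~\ref{thm:weighted}. The only cosmetic difference is that you absorb the Jacobian $\omega'\circ\phi$ into the disk weight $u$ from the outset, which directly yields the sharper supremum $\sup_t |U(it)|/\A\phi(it)$; the paper instead bounds this factor crudely and records the possible improvement in the remark following the lemma.
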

\begin{proof}
Let $\omega$ be the Cayley map, $\omega(z)=\frac{1+z}{1-z}$ and let us set $f=F\circ \omega$, $u=U\circ\omega$,
$\varphi=\omega^{-1}\circ\phi\circ \omega$. Observe that
\(U F'\circ\phi= g\circ \omega^{-1} \) where
\[ g= u \times (\omega^{-1})'\circ \omega \times f'\circ\varphi = \frac{(z-1)^2}2 u \times f'\circ \varphi. \]
A small computation shows that
\[ \sup_{\xi\in\mathbb T}  \frac{|u(\xi)|}{1-|\varphi(\xi)|^2} \approx  \sup_{t\in\mathbb R}
\frac{|U(it)|\times |1+\phi(it)|^2}{\A \phi(it)} \]
and
\[  \frac{1+|\varphi(0)|}{1-|\varphi(0)|} \approx \frac{ |1+\phi(1)|^2}{\A \phi(1) }. \]
Therefore
\begin{align*}
 \| U\cdot F'\circ \phi\|_{H^2_i(\CC_+)}& = \| g\|_{H^2(\DD)}\\
&\lesssim  \left( \frac{ |1+\phi(1)|^2}{\A \phi(1) } \right)^{1/2} \sup_{t\in\mathbb R}
\frac{|U(it)|\times |1+\phi(it)|^2}{\A \phi(it)} \| f'\|_{A^2_1(\DD)}\\
&\lesssim  \left( \frac{ |1+\phi(1)|^2}{\A \phi(1) } \right)^{1/2} \sup_{t\in\mathbb R}
\frac{|U(it)|\times |1+\phi(it)|^2}{\A \phi(it)} \| f\|_{H^2(\DD)}\\
&\lesssim  \left( \frac{ |1+\phi(1)|^2}{\A \phi(1) } \right)^{1/2} \sup_{t\in\mathbb R}
\frac{|U(it)|\times |1+\phi(it)|^2}{\A \phi(it)} \|F\|_{H^2_i(\CC_+)}.
\end{align*}
The statement about compactness follows from the second half of Corollary \ref{thm:weighted}.
\end{proof}

\begin{remark}
We could improve the statement of Lemma \ref{lem:weightedhalfplane} by replacing everywhere
the term $|U(it)|\times |1+\phi(it)|^2/\A \phi(it)$ by $|U(it)|\times |1+\phi(it)|^2/(\A \phi(it)\times (1+t^2))$. Indeed, we apply Corollary \ref{thm:weighted} not to $u$ but to $(z-1)^2u/2$ which allows
this better inequality. Nevertheless, since we plan to apply this result to vertical limits, with a uniform
bound, the term $1+t^2$ would not be helpful to us.
\end{remark}

\subsubsection{Applications to connected components and compact differences}

\begin{theorem}\label{thm:samecomponentzero}
Let $\varphi_0$, $\varphi_1\in\mathcal G$ with $\cha(\varphi_0)=\cha(\varphi_1)=0$. Assume that there exists $C>0$ such that
 $|\varphi_0-\varphi_1|\leq C\min\left(\frac{\A \varphi_0-1/2}{|1+\varphi_0|^2},\frac{\A \varphi_1-1/2}{|1+\varphi_1|^2}\right)$.
Then $\cphiz$ and $\cphiu$ belong to the same component of $\mathcal C(\mathcal H)$. If, \ moreover, \
$|\varphi_0-\varphi_1|= o\left(\min\left(\frac{\A \varphi_0-1/2}{|1+\varphi_0|^2},\frac{\A \varphi_1-1/2}{|1+\varphi_1|^2}\right)\right)$ as $\min\left(\frac{\A \varphi_0-1/2}{|1+\varphi_0|^2},\frac{\A \varphi_1-1/2}{|1+\varphi_1|^2}\right)$ tends to zero, then $\cphiz-\cphiu$ is compact.
\end{theorem}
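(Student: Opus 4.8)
The plan is to transpose the scheme of Theorem \ref{thm:samecomponent} from positive to zero characteristic, replacing the Nevanlinna-counting tools by the half-plane machinery of this subsection. I would set $\varphi_\lambda=(1-\lambda)\varphi_0+\lambda\varphi_1$ for $\lambda\in[0,1]$; since $\CC_{1/2}$ is convex and the abscissa of uniform convergence is stable under convex combinations, each $\varphi_\lambda\in\mathcal G$ with $\cha(\varphi_\lambda)=0$, and $\varphi_0-\varphi_1$ is a bounded Dirichlet series on $\CC_+$ (the right-hand side of the hypothesis is bounded). By Lemma \ref{lem:wco} it then suffices to prove that the operator $T_\lambda:f\mapsto(\varphi_0-\varphi_1)\,C_{\varphi_\lambda}(f')$ is bounded on $\mathcal H$ with norms uniform in $\lambda$ for the first assertion, and compact for each $\lambda$ for the second.

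Using \eqref{eq:normH0} with $d\mu(t)=\frac{dt}{\pi(1+t^2)}$, I would write $\|T_\lambda f\|^2=\int_{\ttinf}\|(T_\lambda f)_\chi\|^2_{H^2_i(\CC_+)}\,dm(\chi)$. Because $c_0=0$, the commutation relation gives $(T_\lambda f)_\chi=u_\chi\cdot\big(f'\circ(\varphi_\lambda)_\chi\big)$ with $u=\varphi_0-\varphi_1$ and the \emph{same} $f'$ in every fibre. Setting $F=f(\cdot+\tfrac12)$ and $\phi_\chi=(\varphi_\lambda)_\chi-\tfrac12$ (a selfmap of $\CC_+$) one has $(T_\lambda f)_\chi=u_\chi\cdot(F'\circ\phi_\chi)$, so Lemma \ref{lem:weightedhalfplane} applies fibrewise and yields
\[ \|(T_\lambda f)_\chi\|^2_{H^2_i(\CC_+)}\lesssim \frac{|1+\phi_\chi(1)|^2}{\A\phi_\chi(1)}\left(\sup_{t}\frac{|u_\chi(it)|\,|1+\phi_\chi(it)|^2}{\A\phi_\chi(it)}\right)^2\|F\|^2_{H^2_i(\CC_+)}. \]
Since $F$ is independent of $\chi$, I would pull $\|F\|_{H^2_i(\CC_+)}$ out of the integral and control it by the critical-line embedding $\int_\RR|f(\tfrac12+it)|^2\frac{dt}{1+t^2}\lesssim\|f\|^2_{\mathcal H}$, itself obtained from a Montgomery–Vaughan mean value estimate on unit intervals, vertical-translation invariance and summation of the weights $\tfrac1{1+n^2}$.

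It remains to bound the two geometric factors uniformly in $\chi$ and $\lambda$. The first is evaluated at the interior point $s=1$, where $(\varphi_j)_\chi(1)=\mathcal B\psi_j\big(\chi\cdot(p_1^{-1},p_2^{-1},\dots)\big)$ ranges over the fixed compact torus $\{|z_m|=p_m^{-1}\}\subseteq\DD^\infty$; on it $\mathcal B\psi_j$ converges absolutely and $\A\mathcal B\psi_j-\tfrac12$ is continuous and strictly positive, so $\A\phi_\chi(1)$ is bounded below and $|1+\phi_\chi(1)|$ above, uniformly. The second (boundary) factor is the delicate one, since $\A\phi_\chi(it)$ may vanish on the critical line; here I would read the hypothesis on vertical limits as $|u_\chi(it)|\leq C\min_j\frac{\A(\varphi_j)_\chi(it)-\tfrac12}{|1+(\varphi_j)_\chi(it)|^2}$ and exploit the quasi-convexity of $g(w)=\frac{|1+w|^2}{\A w-1/2}$ on $\{\A w>\tfrac12\}$ (its sublevel sets are discs). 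Applied to $(\varphi_\lambda)_\chi=(1-\lambda)(\varphi_0)_\chi+\lambda(\varphi_1)_\chi$ this gives $g\big((\varphi_\lambda)_\chi\big)\leq\max_j g\big((\varphi_j)_\chi\big)$, which exactly cancels the minimum from the hypothesis (after the harmless comparison $|1+\phi_\chi|\approx|1+(\varphi_\lambda)_\chi|$) and bounds the supremum by a constant. Combining, $\|T_\lambda f\|^2\lesssim\|f\|^2_{\mathcal H}$ uniformly in $\lambda$, and Lemma \ref{lem:wco}(i) yields the Lipschitz arc $\lambda\mapsto C_{\varphi_\lambda}$, hence the first conclusion.

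For compactness I would feed the strengthened hypothesis into the compactness half of Lemma \ref{lem:weightedhalfplane}: the $o$-condition forces $\sup\{|u_\chi(it)||1+\phi_\chi(it)|^2/\A\phi_\chi(it):\ \A\phi_\chi(it)/|1+\phi_\chi(it)|^2\leq\delta\}\to0$, so each fibre operator $F\mapsto u_\chi F'\circ\phi_\chi$ is compact on $H^2_i(\CC_+)$. Taking $(f_n)$ weakly null in $\mathcal H$ with $\|f_n\|\leq1$, the shifts $F_n=f_n(\cdot+\tfrac12)$ are bounded in $H^2_i(\CC_+)$ and tend to $0$ uniformly on compacta, hence weakly; fibrewise compactness gives $\|(T_\lambda f_n)_\chi\|_{H^2_i}\to0$ for every $\chi$, while the uniform estimate above furnishes an integrable dominating function, so dominated convergence gives $\|T_\lambda f_n\|\to0$ and $\cphiz-\cphiu$ is compact by Lemma \ref{lem:wco}(ii). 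I expect the main obstacle to be precisely the uniform control of the boundary factor: one must check that the hypothesis, transported to vertical limits, interacts with the quasi-convexity of the data so as to neutralise the vanishing of $\A\phi_\chi(it)$ on the critical line \emph{uniformly} over both $\chi$ and the whole family $(\varphi_\lambda)_{\lambda\in[0,1]}$, with the critical-line embedding bounding $\|f(\cdot+\tfrac12)\|_{H^2_i(\CC_+)}$ by $\|f\|_{\mathcal H}$ as the supporting technical point.
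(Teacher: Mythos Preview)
Your proposal is correct and follows essentially the same route as the paper's proof: reduce via Lemma~\ref{lem:wco} (equivalently, inequality~\eqref{eq:wco1}) with the Poisson-type measure $d\mu(t)=\frac{dt}{1+t^2}$, exploit $c_0=0$ so that $f'_{\chi^{c_0}}=f'$ is the same in every fibre, apply Lemma~\ref{lem:weightedhalfplane} fibrewise after the shift by $\tfrac12$, bound $\|F\|_{H^2_i}$ by the critical-line embedding $\mathcal H\hookrightarrow H^2_i(\CC_{1/2})$ (the paper cites \cite{br} for this; your Montgomery--Vaughan argument gives the same inequality), handle the interior factor at $s=1$ by compactness of $\{(\varphi_r)_\chi(1)\}$, and transfer the hypothesis to $\varphi_\lambda$ via convexity of the sublevel sets of $w\mapsto |1+w|^2/(\A w-\tfrac12)$ (the paper phrases this as ``$\{\A z-\tfrac12\geq a|1+z|^2\}$ is a disk''). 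Your compactness paragraph even fills in the dominated-convergence step that the paper leaves to the reader.
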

\begin{proof}
We start as in the proof of Theorem \ref{thm:samecomponent} but now we choose the measure $d\mu=\frac{ dt}{1+t^2}$.
Taking into account that the symbols have zero characteristic, we know that for all $\lambda,\ \lambda'\in[0,1]$,
\[
\begin{array}{rcl}
\left\|C_{\varphi_\lambda}(f)-C_{\varphi_{\lambda'}}(f)\right\|^2 &\leq&\displaystyle |\lambda'-\lambda|^2
\int_\lambda^{\lambda'}\!\int_{\ttinf}\!\int_{\mathbb R}\left|(\varphi_1)_\chi(it)-(\varphi_0)_\chi(it)\right|^2\\[4mm]
&&\quad\quad\quad\quad\quad\quad\quad\times
\displaystyle |f'((\varphi_r)_\chi(it))|^2\frac{dt}{1+t^2} dm(\chi)dr.
\end{array}
\]
Now $\mathcal H$ embeds into $H^2_i(\CC_+)$ via the following inequality (\cite{br}): for all $g\in\mathcal H$,
\[ \|g\|_{H^2_i(\mathbb C_{1/2})}\leq \sqrt 2\|g\|_{\mathcal H}. \]
Therefore we just need to prove that there exists $A>0$ such that, for all $\chi\in\ttinf$, for all $r\in[0,1]$ and for all $g\in H^2_i(\CC_{1/2})$,
\[ {\|(\varphi_1-\varphi_0)_\chi \cdot g'\circ (\varphi_r)_\chi\|_{H^2_i(\CC_+)} } \leq A \|g\|_{H^2_i(\CC_{1/2})}. \]
For a fixed $a>0$, the condition $\frac{\A z-1/2}{|1+z|^2}\geq a$ is equivalent to say that $z$ belongs to some disk.
Since every disk is convex, we get that for all $r\in[0,1]$,
\[ |\varphi_0-\varphi_1| \leq C\frac{\A \varphi_r-1/2}{|1+\varphi_r|^2}. \]
By vertical translation, we also have
\[ |(\varphi_0)_\chi-(\varphi_1)_\chi| \leq C\frac{\A (\varphi_r)_\chi-1/2}{|1+(\varphi_r)_\chi|^2}. \]
Moreover, since $\varphi_0$ and $\varphi_1$ converge uniformly on $\CC_{1/2}$,
there exists some compact subset $K\subseteq \CC_{1/2}$ such that, for all $\chi\in\ttinf$ and all $r>0$, $(\varphi_r)_\chi(1)$ belongs to $K$. Therefore, Lemma \ref{lem:weightedhalfplane} gives the desired result.
The proof of the compactness of $\cphiz-\cphiu$ follows the same lines and is left to the reader.
\end{proof}

We then get a result similar to Corollary \ref{cor:samecomponent} for composition operators with zero characteristic.

\begin{corollary}\label{cor:samecomponentzero}
Let $\varphi\in\mathcal G$ with zero characteristic and such that $|\varphi|$ is bounded. Assume that there exist $C>0$, $k\geq 1$, $t_0\in\RR$ such that, for all $s\in\CC_+$,
$$|\B (\varphi(s))-t_0|\leq C \left( \A(\varphi(s))-\frac 12\right)^{1/k}.$$
Then $C_\varphi$ is not isolated in $\mathcal C(\mathcal H)$.
\end{corollary}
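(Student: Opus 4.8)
The plan is to transcribe the proof of Corollary \ref{cor:samecomponent} into the zero-characteristic setting, replacing Theorem \ref{thm:samecomponent} by Theorem \ref{thm:samecomponentzero}. Set $\varphi_0=\varphi$ and, for a small $\delta>0$ to be fixed, define
\[ \varphi_1=\varphi+\delta\,\eta^k,\qquad \eta:=\varphi-\tfrac12-it_0, \]
so that $\A\eta=\A\varphi-\frac12>0$ on $\CC_+$ and $\B\eta=\B\varphi-t_0$. Since $\eta^k$ is a finite product of uniformly (hence absolutely, past $\A s=\tfrac12$) convergent Dirichlet series with no $s$-term, $\varphi_1$ automatically has zero characteristic; the only thing to check for $\varphi_1\in\mathcal G$ is that $\varphi_1(\CC_+)\subseteq\CC_{1/2}$. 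Granting this, I will verify that $\varphi_0,\varphi_1$ satisfy the hypothesis of Theorem \ref{thm:samecomponentzero}, which places $\cphiz$ and $\cphiu$ in the same component; as $\eta\not\equiv 0$ (otherwise $\varphi\equiv\frac12+it_0\notin\mathcal G$) forces $\varphi_1\neq\varphi_0$ and hence $C_{\varphi_1}\neq C_{\varphi_0}$, this yields that $C_\varphi$ is not isolated.

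The first step is the elementary inequality $|\eta|^k\lesssim\A\eta$. The hypothesis gives $(\B\eta)^2\leq C^2(\A\eta)^{2/k}$, while the boundedness of $|\varphi|$ makes $\A\eta$ bounded, so that $(\A\eta)^2\lesssim(\A\eta)^{2/k}$ as well (because $2\geq 2/k$); adding, $|\eta|^2\lesssim(\A\eta)^{2/k}$, i.e. $|\eta|^k\lesssim\A\eta$. Next, expanding $\A(\eta^k)=\A\big((\A\eta+i\B\eta)^k\big)$, the top term is $(\A\eta)^k\geq 0$ and every remaining term is, up to a binomial constant, of the form $(\A\eta)^m|\B\eta|^{k-m}$ with $0\leq m<k$; using $|\B\eta|\leq C(\A\eta)^{1/k}$ each such term is $\lesssim(\A\eta)^{1+m(k-1)/k}\lesssim\A\eta$, since the exponent is $\geq 1$ and $\A\eta$ is bounded. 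Thus there is $C_0>0$ with $\A(\eta^k)\geq-C_0\A\eta$ (and $|\A(\eta^k)|\lesssim\A\eta$). Choosing $\delta=1/(2C_0)$ gives
\[ \A\varphi_1=\tfrac12+\A\eta+\delta\A(\eta^k)\geq\tfrac12+\tfrac12\A\eta>\tfrac12, \]
so $\varphi_1(\CC_+)\subseteq\CC_{1/2}$ and $\varphi_1\in\mathcal G$.

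It remains to check the domination required by Theorem \ref{thm:samecomponentzero}. Since $\A\varphi_0,\A\varphi_1>\frac12$ and $|\varphi|$, hence $|\varphi_1|$, is bounded, both $|1+\varphi_0|^2$ and $|1+\varphi_1|^2$ are bounded above and below by positive constants; combined with $\A\varphi_1-\frac12\approx\A\eta$ (from $\frac12\A\eta\leq\A\varphi_1-\frac12\lesssim\A\eta$), this shows
\[ \min\!\left(\frac{\A\varphi_0-1/2}{|1+\varphi_0|^2},\frac{\A\varphi_1-1/2}{|1+\varphi_1|^2}\right)\approx\A\eta. \]
On the other hand $|\varphi_0-\varphi_1|=\delta|\eta|^k\lesssim\A\eta$ by the first step, which is precisely the required bound. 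Theorem \ref{thm:samecomponentzero} then gives that $\cphiz$ and $\cphiu$ belong to the same component, and the non-isolation of $C_\varphi$ follows. The only mildly delicate point is the lower bound $\A(\eta^k)\geq-C_0\A\eta$ ensuring $\varphi_1\in\mathcal G$; the rest is a direct transcription of the positive-characteristic argument, with the denominators $|1+\varphi_i|^2$ harmlessly absorbed by the boundedness of $|\varphi|$.
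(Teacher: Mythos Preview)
Your proof is correct and follows essentially the same route as the paper: set $\varphi_1=\varphi+\delta\bigl(\varphi-\tfrac12-it_0\bigr)^k$, verify $\varphi_1\in\mathcal G$ via the lower bound $\A(\eta^k)\geq -C_0\A\eta$, and then check the hypothesis of Theorem~\ref{thm:samecomponentzero} using that $|1+\varphi_i|^2$ is bounded above and below thanks to the boundedness of $|\varphi|$. The paper's proof is terser (it reduces to $t_0=0$ and refers back to Corollary~\ref{cor:samecomponent} for the $\varphi_1\in\mathcal G$ step), while you spell out the estimate $|\eta|^k\lesssim\A\eta$ and explicitly note $\varphi_1\neq\varphi_0$, but the underlying argument is the same.
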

\begin{proof}
We set $\varphi_0=\varphi$ and let, for $\delta>0$, $\varphi_1=\varphi+\delta\left(\varphi-\frac12\right)^k$.
As in the proof of Corollary \ref{cor:samecomponent}, $\varphi_1\in\mathcal G$ provided $\delta$ is small enough. Moreover, the assumptions of Corollary \ref{cor:samecomponentzero} are satisfied since
\begin{align*}
|\varphi_0(s)-\varphi_1(s)|^2&=\delta^2 \left|\varphi(s)-\frac 12\right|^{2k}\\
&\lesssim \left( \A\varphi(s)-\frac 12\right)^{2k}+\big( \B\varphi(s)\big)^{2k}\\
&\lesssim\left( \A\varphi(s)-\frac 12\right)^{2}\\
&\lesssim \frac{\left( \A\varphi(s)-\frac 12\right)^{2}}{|\varphi+1|^4}
\end{align*}
since $|\varphi|$ is bounded.
\end{proof}

We deduce from this result that the Shapiro-Sundberg conjecture remains false for composition operators with zero characteristic.
\begin{example}
Let $\varphi_0(s)=\frac 12+(1-2^{-s})$ and $\varphi_1(s)=\varphi_0(s)+\delta(1-2^{-s})^2$. Then, for $\delta>0$ sufficiently small, $\varphi_0,\ \varphi_1\in\mathcal G$, $\cphiz$ and $\cphiu$ belong to the same component of $\mathcal C(\mathcal H)$ but $\cphiz-\cphiu$ is not compact.
\end{example}
\begin{proof}
The first half of the statement comes from Corollary \ref{cor:samecomponentzero} (and its proof). The second half is a consequence of Theorem \ref{thm:lcpolynomial} since neither $\varphi_0$ nor $\varphi_1$ is compact by \cite[Theorem 2]{bb}.
\end{proof}

\subsection{Order of contact, connected components and compact differences}

In this final section, we show how the conditions of Theorem \ref{thm:samecomponent} can be satisfied from conditions on the derivatives of $\mathcal B\psi$.
We need a notion of order of contact, which is maybe less natural than in one variable (see \cite{km}).

Let $\varphi\in\mathcal G$ be a Dirichlet polynomial symbol with $\cha(\varphi)\geq 1$. Let $d$ be the smallest integer such that
\[ \varphi(s)=c_0 s+\sum_{p^+(n)\leq p_d}c_n n^{-s}. \]
Then $\bpsi$ maps $\DD^d$ into $\CC_+$. Denote by $J_\varphi:\RR^d\to [0,+\infty[$, $J_\varphi(\theta_1,\dots,\theta_d)=\A \bpsi (e^{i\theta_1},\cdots,e^{i\theta_d})$. Recall also
that $\Gamma(\bpsi)$ is defined by $\Gamma(\bpsi)=\{z\in\mathbb T^d: \A\bpsi(z)=0\}$ (this set is nonempty provided $\varphi$ has unrestricted range). Let $z= (e^{i\theta_1^0},\dots,e^{i\theta_d^0})\in \Gamma(\bpsi)$. Then $J_\varphi$ attains its minimum at $(\theta_1^0,\dots,\theta_d^0)$.
\begin{definition}
We say that $\varphi$ has a  {\bf Dirichlet contact of order $2n$ at $z=(e^{i\theta_1^0},\ldots,e^{i\theta_d^0})$} provided there exists $c>0$ and a neighbourhood $\mathcal U$ of  $(\theta_1^0,\dots,\theta_d^0)$ in $\RR^d$ such that, for all $(\theta_1,\dots,\theta_d)\in \mathcal U$,
$$J_\varphi(\theta_1,\dots,\theta_d)\geq c\big( (\theta_1-\theta_1^0)^2+\cdots+(\theta_d-\theta_d^0)^2\big)^n.$$
\end{definition}

Let us explain this terminology. Without loss of generality, we may assume that $\theta_1^0=\cdots=\theta_d^0=0$ and $\bpsi(1,\ldots,1)=0$.
The Taylor expansion of $\bpsi$ at $z$ may be written
$$\bpsi(e^{i\theta_1},\dots,e^{i\theta_d})=\sum_{j=1}^d a_j (1-e^{i\theta_j})+O\left(\theta_1^2+\cdots+\theta_d^2\right)$$
with each $a_j\geq 0$ (see \cite{bb}).
In particular, there exists a neighbourhood  $\mathcal V$ of $(0,\dots,0)$ in $\mathbb R^d$ such that, for all $(\theta_1,\dots,\theta_d)\in \mathcal V$,
$$|\B \bpsi(e^{i\theta_1},\dots,e^{i\theta_d})|\leq C (\theta_1^2+\cdots+ \theta_d^2)^{1/2}.$$
Therefore, for $(\theta_1,\dots,\theta_d)\in \mathcal U\cap\mathcal V$,
$$ |\B \bpsi(e^{i\theta_1},\dots,e^{i\theta_d})|^{2n}\lesssim  \A \bpsi(e^{i\theta_1},\dots,e^{i\theta_d}).$$
Hence, if $\varphi$ has a Dirichlet contact of order $2n$ at $z$, the Bohr lift $\mathcal B\psi $ has a contact of order $2n$ at $z$.

\smallskip

For $w\in\mathbb T^d$ and $\sigma>0$, we denote by $w_\sigma=(2^{-\sigma}w_1,\dots, p_d^{-\sigma}w_d)$.

\begin{lemma}
Let $\varphi_0,\varphi_1\in\mathcal G$ be Dirichlet polynomial symbols with $\cha(\varphi_0)=\cha(\varphi_1)\geq 1$. Let $z\in\Gamma(\mathcal B{\psi_0})\cap \Gamma(\mathcal B{\psi_1})$ such that
\begin{itemize}
\item $\mathcal B{\psi_0}(z)=\mathcal B{\psi_1}(z)$;
\item $\varphi_0$ and $\varphi_1$ have a Dirichlet contact of order $2n$ at $z$;
\item for $|\alpha|\leq 2n-1$, $\partial_\alpha \mathcal B{\psi_0}(z)=\partial_\alpha \mathcal B{\psi_1}(z)$.
\end{itemize}
 Then there exist a neighbourhood $\mathcal U$ of $z$ in $\mathbb T^d$, $\sigma_0>0$ and $C>0$ such that, for all $w\in\mathcal U$, for all $\sigma\in [0,\sigma_0]$,
$$|\mathcal B{\psi_0}(w_\sigma)-\mathcal B{\psi_1}(w_\sigma)|\leq C \min \big(\A \mathcal B{\psi_0}(w_\sigma),\A \mathcal B{\psi_1}(w_\sigma)\big).$$
Moreover, if we assume that $\partial_\alpha \mathcal B\psi_0(z)=\partial_\alpha\mathcal B\psi_1(z)$ for $|\alpha|\leq 2n$, then
\[ |\mathcal B{\psi_0}(w_\sigma)-\mathcal B{\psi_1}(w_\sigma)| = o\left( \min \big(\A \mathcal B{\psi_0}(w_\sigma),\A \mathcal B{\psi_1}(w_\sigma)\big)\right) \]
as $w_\sigma\to z$.
\end{lemma}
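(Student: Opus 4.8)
The plan is to reduce everything to a local estimate at $z$ and then pit the vanishing order of the difference $\mathcal B\psi_0-\mathcal B\psi_1$ against the order of contact of each real part. I would begin by normalising. Replacing each $\psi_j$ by the vertical limit whose Bohr lift is $\tilde w\mapsto \mathcal B\psi_j(z\tilde w)$ (a rotation of the torus, which preserves membership in $\mathcal G$, the Dirichlet contact order, and the matching of the derivatives at the marked point), I may assume $z=\mathbf e:=(1,\dots,1)$; subtracting the common purely imaginary value $\mathcal B\psi_0(\mathbf e)=\mathcal B\psi_1(\mathbf e)$ — which changes neither the difference nor the real parts — I may also assume $\mathcal B\psi_0(\mathbf e)=\mathcal B\psi_1(\mathbf e)=0$. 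Write $w=(e^{i\theta_1},\dots,e^{i\theta_d})$, so $w$ is near $z$ iff $\theta$ is near $0$, and set $G=\mathcal B\psi_0-\mathcal B\psi_1$, a polynomial in $z_1,\dots,z_d$. A direct computation gives $|w_{l,\sigma}-1|^2=\theta_l^2+\sigma^2\log^2 p_l+o(\theta_l^2+\sigma^2)$, hence $\|w_\sigma-\mathbf e\|^2\approx \sigma^2+\|\theta\|^2$. The hypothesis $\partial_\alpha\mathcal B\psi_0(z)=\partial_\alpha\mathcal B\psi_1(z)$ for $|\alpha|\leq 2n-1$ says exactly that $G$ vanishes to order $2n$ at $\mathbf e$, so Taylor's formula yields
\[ |G(w_\sigma)|\lesssim \|w_\sigma-\mathbf e\|^{2n}\approx(\sigma^2+\|\theta\|^2)^n\lesssim \sigma+\|\theta\|^{2n}, \]
the last step using $(\sigma^2+\|\theta\|^2)^n\lesssim \sigma^{2n}+\|\theta\|^{2n}$ and $\sigma^{2n}\leq\sigma$ for $\sigma\leq\sigma_0\leq 1$.

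The heart of the matter is the matching lower bound
\[ \A\mathcal B\psi_j(w_\sigma)\gtrsim \sigma+\|\theta\|^{2n},\quad j=0,1.\qquad(\star) \]
On the distinguished boundary ($\sigma=0$) this is precisely the Dirichlet contact of order $2n$: $\A\mathcal B\psi_j(w)=J_{\varphi_j}(\theta)\geq c\|\theta\|^{2n}$. To bring in $\sigma$, I would differentiate radially. Using $\partial_\sigma w_{l,\sigma}=-\log p_l\, w_{l,\sigma}$ and the identity $\mathcal B\psi_j(p_1^{-s},\dots,p_d^{-s})=\psi_j(s)$, one finds
\[ \partial_\sigma\big(\A\mathcal B\psi_j(w_\sigma)\big)\Big|_{\sigma=0,\ \theta=0}=\A\psi_j'(0)=\sum_{l=1}^d\big(-\partial_{z_l}\mathcal B\psi_j(\mathbf e)\big)\log p_l, \]
which is $\geq 0$ term by term by \cite[Lemma 7]{bb}.

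The crucial point is that this number is \emph{strictly} positive. Indeed $\psi_j$ is a nonconstant analytic self-map of $\CC_+$ whose radial limit at $0$ is $0$, and the radial limit of $\A\psi_j(s)/\A s$ equals the finite number $\psi_j'(0)$; hence the liminf defining the angular derivative at $0$ is finite, and Lemma \ref{lem:jc} forces this angular derivative $\psi_j'(0)$ to lie in $(0,+\infty)$. Since $(\sigma,\theta)\mapsto\partial_\sigma\big(\A\mathcal B\psi_j(w_\sigma)\big)$ is continuous, it remains $\geq\tfrac12\psi_j'(0)>0$ on a neighbourhood of the origin; integrating from $0$ to $\sigma$ gives $\A\mathcal B\psi_j(w_\sigma)\geq J_{\varphi_j}(\theta)+\tfrac12\psi_j'(0)\,\sigma\geq c\|\theta\|^{2n}+c'\sigma$, which is $(\star)$. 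Intersecting the two neighbourhoods (one per $j$) produces the required $\mathcal U$ and $\sigma_0$.

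Combining the upper bound with $(\star)$ gives $|G(w_\sigma)|\lesssim \sigma+\|\theta\|^{2n}\lesssim \min_j\A\mathcal B\psi_j(w_\sigma)$, the first assertion. For the refinement, the extra hypothesis $\partial_\alpha\mathcal B\psi_0(z)=\partial_\alpha\mathcal B\psi_1(z)$ for $|\alpha|\leq 2n$ makes $G$ vanish to order $2n+1$, so $|G(w_\sigma)|\lesssim\|w_\sigma-\mathbf e\|^{2n+1}\approx(\sigma^2+\|\theta\|^2)^{1/2}\,(\sigma^2+\|\theta\|^2)^n$; the first factor tends to $0$ as $w_\sigma\to z$, while the second is $\lesssim\sigma+\|\theta\|^{2n}\lesssim\A\mathcal B\psi_j(w_\sigma)$ by $(\star)$, whence $|G(w_\sigma)|=o\big(\min_j\A\mathcal B\psi_j(w_\sigma)\big)$. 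The main obstacle is the lower bound $(\star)$, and inside it the strict positivity of the radial derivative $\psi_j'(0)$ obtained from Julia--Caratheodory; the remaining steps (Taylor control of $G$ and integration of the radial derivative) are routine.
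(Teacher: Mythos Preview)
Your proof is correct and follows the same strategy as the paper: normalise to $z=\mathbf e$ and $\mathcal B\psi_j(\mathbf e)=0$, bound $|\mathcal B\psi_0(w_\sigma)-\mathcal B\psi_1(w_\sigma)|$ via Taylor's theorem using the vanishing order of the difference, and obtain the lower bound $(\star)$ by combining the Dirichlet-contact estimate on $\TT^d$ with the strict positivity of the radial $\sigma$-derivative at the base point. The only minor variation is that you deduce this strict positivity from the Julia--Carath\'eodory theorem applied to the nonconstant self-map $\psi_j$ of $\CC_+$, whereas the paper cites \cite{bb} directly to get $-\partial_{z_l}\mathcal B\psi_j(\mathbf e)\geq 0$ for all $l$ with strict inequality for at least one $l$.
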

\begin{proof}
We may assume that $z=\mathbf e$ and that $\mathcal B\psi_0(\mathbf e)=\mathcal B\psi_1(\mathbf e)=0$. We set $m=2n-1$ or $m=2n$, following the assumptions that we have made. These assumptions imply that, looking at some $w=(e^{i\theta_1},\dots,e^{i\theta_d})$,
\begin{align*}
|\mathcal B\psi_0(w_\sigma)-\mathcal B\psi_1(w_\sigma)|&=|\mathcal B\psi_0(w_\sigma)-1+1-\mathcal B\psi_1(w_\sigma)|\\
&=\Bigg|\sum_{|\alpha|=m+1}a_\alpha(1-2^{-\sigma}e^{i\theta_1})^{\alpha_1}\cdots
(1-p_d^{-\sigma}e^{i\theta_d})^{\alpha_d}\\
&\quad\quad+o(\|w_\sigma-\mathbf e\|^{m+1})\Bigg|\\
&\leq C_1\left(\sigma^{m+1}+|\theta_1|^{m+1}+\cdots +|\theta_d|^{m+1}\right)
\end{align*}
for some constant $C_1>0$.

On the other hand, let $i=0,1$ and let us write the Taylor  expansion of $\mathcal B\psi_i$ at $w$ as
$$\mathcal B\psi_i(w_\sigma)=\mathcal B\psi_i(w)+\sum_{j=1}^d a_{i,j}(w) (1-p_j^{-\sigma})+F_i(w,\sigma)$$
where the functions $a_{i,j}$ are continuous and $|F_i(w,\sigma)|\leq C_2\sigma^2$. Expanding $1-p_j^{-\sigma}$,
this can been rewritten as
$$\mathcal B\psi_i(w_\sigma)=\mathcal B\psi_i(w)+\left(\sum_{j=1}^d a_{i,j}(w)\log p_j \right)\sigma +G_i(w,\sigma)$$
with $|G_i(w,\sigma)|\leq C_3\sigma^2$.  Now, we know (see \cite{bb}) that $a_{i,j}(\mathbf e)\geq 0$ and $a_{i,j_0}(\mathbf e)>0$ for at least one $j_0\in\{1,\dots,d\}$.
From the continuity of the functions $a_{i,j}$, we deduce the existence of a neighbourhood $\mathcal U$ of $\mathbf e$ in $\mathbb T^d$, of $C_4>0$  and of $\sigma_0>0$ such that, for all $w\in\mathcal U$ and for all $\sigma\in [0,\sigma_0]$,
\begin{align*}
\A  \mathcal B\psi_i(w_\sigma)&\geq \A \mathcal B\psi_i(w)+C_4\sigma\\
&\geq C_4\sigma+C_5(\theta_1^{2n}+\cdots+\theta_d^{2n})
\end{align*}
where the last inequality holds because $\varphi_i$ has a Dirichlet contact of order $2n$ at $\mathbf e$. This yields the following lemma.
\end{proof}

\begin{lemma}
Let $\varphi_0,\varphi_1\in\mathcal G$ be Dirichlet polynomial symbols with $\cha(\varphi_0)=\cha(\varphi_1)\geq 1$. Assume that $\Gamma(\mathcal B{\psi_0})=\Gamma( \mathcal B{\psi_1})$ and that, for all $z\in\Gamma(\mathcal B{\psi_0})$, there exists $n\in\mathbb N$ such that
\begin{itemize}
\item $\mathcal B{\psi_0}(z)=\mathcal B{\psi_1}(z)$;
\item $\varphi_0$ and $\varphi_1$ have a Dirichlet contact of order $2n$ at $z$;
\item for $|\alpha|\leq 2n-1$, $\partial_\alpha \mathcal B{\psi_0}(z)=\partial_\alpha \mathcal B{\psi_1}(z)$.
\end{itemize}
 Then there exists $C>0$ such that, for all $w\in\mathbb T^d$ and all $\sigma\geq 0$,
$$|\mathcal B{\psi_0}(w_\sigma)-\mathcal B{\psi_1}(w_\sigma)|\leq C \min \big( \A \mathcal B{\psi_0}(w_\sigma),\A\mathcal B{\psi_1}(w_\sigma)\big).$$
Moreover, if we assume that $\partial_\alpha \mathcal B\psi_0(z)=\partial_\alpha\mathcal B\psi_1(z)$ for $|\alpha|\leq 2n$, then
\[ |\mathcal B{\psi_0}(w_\sigma)-\mathcal B{\psi_1}(w_\sigma)| = o\left( \min \big(\A \mathcal B{\psi_0}(w_\sigma),\A \mathcal B{\psi_1}(w_\sigma)\big)\right) \]
as $\min \big(\A \mathcal B{\psi_0}(w_\sigma),\A \mathcal B{\psi_1}(w_\sigma)\big)$ goes to zero.
\end{lemma}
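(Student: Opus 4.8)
The plan is to bootstrap the preceding local lemma to a global estimate by a compactness argument, the crucial preliminary observation being that the hypotheses force the common boundary set $\Gamma:=\Gamma(\mathcal B{\psi_0})=\Gamma(\mathcal B{\psi_1})$ to be \emph{finite}. Indeed, the definition of a Dirichlet contact of order $2n$ at a point $z=(e^{i\theta_1^0},\dots,e^{i\theta_d^0})$ provides a neighbourhood on which $J_{\varphi_0}(\theta)\geq c\big((\theta_1-\theta_1^0)^2+\cdots+(\theta_d-\theta_d^0)^2\big)^n$, so that $z$ is isolated in $\Gamma$; a closed, discrete subset of the compact torus $\mathbb T^d$ is finite. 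If $\Gamma=\varnothing$ the statement is trivial, since then $\A\mathcal B{\psi_0}$ and $\A\mathcal B{\psi_1}$ are bounded below by a positive constant on the compact set $\overline{\DD^d}$; so I assume $\Gamma=\{z^{(1)},\dots,z^{(m)}\}$ with $m\geq 1$.

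Next I would introduce, on $\mathbb T^d\times[0,+\infty)$, the two continuous functions $g(w,\sigma)=\min\big(\A\mathcal B{\psi_0}(w_\sigma),\A\mathcal B{\psi_1}(w_\sigma)\big)$ and $h(w,\sigma)=|\mathcal B{\psi_0}(w_\sigma)-\mathcal B{\psi_1}(w_\sigma)|$, both bounded because $w_\sigma$ ranges in the compact set $\overline{\DD^d}$. The target inequalities read $h\leq Cg$ and $h=o(g)$. The decisive geometric facts are: recalling that each $\mathcal B{\psi_i}$ maps $\DD^d$ into $\CC_+$, one has $\A\mathcal B{\psi_i}>0$ on $\DD^d$, whence $g(w,\sigma)=0$ if and only if $\sigma=0$ and $w\in\Gamma$; and $w_\sigma\to 0\in\DD^d$ uniformly as $\sigma\to+\infty$, so $g$ extends continuously to the compactification $\mathbb T^d\times[0,+\infty]$ with the positive value $\min\big(\A\mathcal B{\psi_0}(0),\A\mathcal B{\psi_1}(0)\big)$ at $\sigma=+\infty$.

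For the global inequality I would cover $\Gamma$ by the neighbourhoods $\mathcal U_{z^{(k)}}$ of the local lemma, obtaining $\sigma_0=\min_k\sigma_0(z^{(k)})>0$ and $C_0=\max_k C(z^{(k)})$ with $h\leq C_0 g$ on $\mathcal V\times[0,\sigma_0)$, where $\mathcal V=\bigcup_k\mathcal U_{z^{(k)}}$. On the complementary region $\big(\mathbb T^d\times[0,+\infty)\big)\setminus\big(\mathcal V\times[0,\sigma_0)\big)$, whose closure in $\mathbb T^d\times[0,+\infty]$ is compact and disjoint from the zero set $\Gamma\times\{0\}$ of $g$, continuity yields a uniform lower bound $g\geq\delta>0$; with $H:=\sup h<+\infty$ this gives $h\leq(H/\delta)g$, and $C=\max(C_0,H/\delta)$ settles the first claim.

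The refined estimate I would prove by contradiction: were it false, there would be $\veps>0$ and points $(w^{(j)},\sigma_j)$ with $g(w^{(j)},\sigma_j)\to0$ yet $h>\veps g$ along them. As $g$ is bounded below away from $\mathcal V\times[0,\sigma_0)$, these points eventually sit in a single $\mathcal U_{z^{(k)}}\times[0,\sigma_0)$ (after extraction); shrinking the $\mathcal U_{z^{(k)}}$ so that $\overline{\mathcal U_{z^{(k)}}}\cap\Gamma=\{z^{(k)}\}$ (legitimate since $\Gamma$ is finite), the vanishing of $g$ together with continuity on the compactification forces $w^{(j)}_{\sigma_j}\to z^{(k)}$. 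The second conclusion of the local lemma then gives $h/g\to0$, a contradiction. This last step is exactly where the main difficulty lies: the local lemma only controls $h/g$ \emph{as $w_\sigma\to z$ through a single contact center}, whereas a priori $g\to0$ could be realised along $w_\sigma$ tending to $\Gamma$ through other limit points; the finiteness (equivalently, isolatedness) of $\Gamma$ forced by the Dirichlet contact is precisely what rules this out. The only remaining technical nuisance, the non-compactness of the $\sigma$-variable, is absorbed by the continuous extension to $\sigma=+\infty$ noted above.
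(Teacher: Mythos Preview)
Your proposal is correct and is precisely a detailed write-up of what the paper compresses into the single line ``This follows from the previous lemma and a compactness argument.'' Your explicit observation that the Dirichlet contact hypothesis forces $\Gamma$ to be finite, and your compactification of the $\sigma$-variable at $+\infty$, are natural elaborations that the paper leaves to the reader; neither departs from the intended route.
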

\begin{proof}
This follows from the previous lemma and a compactness argument.
\end{proof}

\begin{corollary}
Let $\varphi_0,\varphi_1\in\mathcal G$ be Dirichlet polynomial symbols with $\cha(\varphi_0)=\cha(\varphi_1)\geq 1$. Assume that $\Gamma(\mathcal B{\psi_0})=\Gamma( \mathcal B{\psi_1})$ and that, for all $z\in\Gamma(\mathcal B{\psi_0})$, there exists $n\in\mathbb N$ such that
\begin{itemize}
\item $\mathcal B{\psi_0}(z)=\mathcal B{\psi_1}(z)$;
\item $\varphi_0$ and $\varphi_1$ have a Dirichlet contact of order $2n$ at $z$;
\item for $|\alpha|\leq 2n-1$, $\partial_\alpha \mathcal B{\psi_0}(z)=\partial_\alpha \mathcal B{\psi_1}(z)$.
\end{itemize}
 Then $\cphiz$ and $\cphiu$ belong to the same component of $\mathcal C(\mathcal H)$.
 Moreover, if we assume that $\partial_\alpha \mathcal B\psi_0(z)=\partial_\alpha\mathcal B\psi_1(z)$ for $|\alpha|\leq 2n$, then $\cphiz-\cphiu$ is compact.
\end{corollary}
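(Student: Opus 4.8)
The plan is to deduce this corollary directly from Theorem \ref{thm:samecomponent}, whose three structural hypotheses I will verify by transporting the information supplied by the preceding lemma from the polydisc $\DD^d$ to the half-plane $\CC_+$. The guiding observation is the dictionary between the two pictures: writing $s=\sigma+it\in\CC_+$ and $w=(p_1^{-it},\dots,p_d^{-it})\in\TT^d$, one has $p_\ell^{-s}=p_\ell^{-\sigma}(p_\ell^{-it})$, so that the $\ell$-th coordinate of $w_\sigma$ is exactly $p_\ell^{-s}$ and hence $\psi_j(s)=\mathcal B\psi_j(w_\sigma)$ for $j=0,1$. Since $\cha(\varphi_0)=\cha(\varphi_1)=c_0$, the linear parts cancel and $\varphi_0-\varphi_1=\psi_0-\psi_1$, while $\A\varphi_j(s)=c_0\sigma+\A\mathcal B\psi_j(w_\sigma)\geq\A\mathcal B\psi_j(w_\sigma)$.

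First I would establish the contraction estimate. Fix $s\in\CC_+$ and let $w\in\TT^d$ be the associated torus point. The preceding lemma, applied at this $w$ and at $\sigma=\A s$, gives
\[ |\varphi_0(s)-\varphi_1(s)|=|\mathcal B\psi_0(w_\sigma)-\mathcal B\psi_1(w_\sigma)|\leq C\min\big(\A\mathcal B\psi_0(w_\sigma),\A\mathcal B\psi_1(w_\sigma)\big)\leq C\min\big(\A\varphi_0(s),\A\varphi_1(s)\big), \]
where the last inequality uses $\A\mathcal B\psi_j(w_\sigma)=\A\psi_j(s)\leq\A\varphi_j(s)$. This is precisely the first hypothesis of Theorem \ref{thm:samecomponent}. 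The remaining two hypotheses are immediate from the fact that the $\varphi_j$ are Dirichlet polynomial symbols: $\psi_0,\psi_1$ and $\psi_0',\psi_1'$ are finite Dirichlet polynomials, hence bounded on $\overline{\CC_+}$ (each monomial satisfies $|k^{-s}|\leq 1$ there), which yields both $|\psi_0|,|\psi_1|\leq C$ and, since $\varphi_j'=c_0+\psi_j'$, also $|\varphi_0'-\varphi_1'|=|\psi_0'-\psi_1'|\leq C$. Theorem \ref{thm:samecomponent} then places $\cphiz$ and $\cphiu$ in the same component of $\mathcal C(\mathcal H)$.

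For the compactness statement I would upgrade the two limiting conditions of Theorem \ref{thm:samecomponent}. Under the additional hypothesis $\partial_\alpha\mathcal B\psi_0(z)=\partial_\alpha\mathcal B\psi_1(z)$ for $|\alpha|\leq 2n$, the $o$-part of the preceding lemma gives $|\varphi_0-\varphi_1|=o\big(\min(\A\varphi_0,\A\varphi_1)\big)$ exactly as above. The point requiring the most care is the condition $\varphi_0'-\varphi_1'\to 0$ as $\min(\A\varphi_0,\A\varphi_1)\to 0$. Here I would differentiate the dictionary, writing $\psi_j'(s)=-\sum_{\ell=1}^d (\log p_\ell)\,p_\ell^{-s}\,\partial_\ell\mathcal B\psi_j(w_\sigma)$, so that
\[ \varphi_0'(s)-\varphi_1'(s)=-\sum_{\ell=1}^d (\log p_\ell)\,p_\ell^{-s}\big(\partial_\ell\mathcal B\psi_0(w_\sigma)-\partial_\ell\mathcal B\psi_1(w_\sigma)\big). \]
Because $c_0\geq 1$, the condition $\min(\A\varphi_0,\A\varphi_1)\to 0$ forces $\sigma\to 0$ and $\A\mathcal B\psi_j(w_\sigma)\to 0$ for at least one index $j$; since the $\mathcal B\psi_j$ extend continuously to $\overline{\DD^d}$ and $w_\sigma\to w\in\TT^d$, any such limit point $w$ lies in $\Gamma(\mathcal B\psi_0)=\Gamma(\mathcal B\psi_1)$. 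As the hypothesis includes equality of all first-order partials on this set, the continuous function $\partial_\ell\mathcal B\psi_0-\partial_\ell\mathcal B\psi_1$ vanishes there, so a subsequence (compactness) argument shows $\partial_\ell\mathcal B\psi_0(w_\sigma)-\partial_\ell\mathcal B\psi_1(w_\sigma)\to 0$; together with $|p_\ell^{-s}|\leq 1$ this yields $\varphi_0'-\varphi_1'\to 0$. Invoking the compactness half of Theorem \ref{thm:samecomponent} finishes the proof. The only genuine subtlety is this last uniformity argument, linking the vanishing of the boundary derivatives to the decay of $\varphi_0'-\varphi_1'$ along every approach to the common zero set.
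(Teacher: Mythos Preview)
Your proposal is correct and follows exactly the paper's approach: the paper's proof is the single sentence ``The previous lemma implies that the assumptions of Theorem~\ref{thm:samecomponent} are satisfied,'' and you have simply unpacked this, including the derivative condition $\varphi_0'-\varphi_1'\to 0$ that the paper leaves implicit. Your treatment of that condition via the chain-rule identity and the compactness/subsequence argument on $\TT^d$ is the natural way to justify what the paper glosses over.
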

\begin{proof}
The previous lemma
 implies that the assumptions of Theorem \ref{thm:samecomponent} are satisfied.
\end{proof}

Combining Theorem \ref{thm:essentialnormcomposition} and the previous corollary, we finally get:
\begin{theorem}
Let $\varphi_0,\varphi_1\in\mathcal G$ be Dirichlet polynomial symbols with $\cha(\varphi_0)=\cha(\varphi_1)\geq 1$.
Assume that for all $z\in\Gamma(\mathcal B\psi_0)$ (resp. $z\in\Gamma(\mathcal B(\psi_1))$, $\varphi_0$ (resp. $\varphi_1$) has Dirichlet order of contact $2$ at $z$. Then the following assertions are equivalent:
\begin{enumerate}[(i)]
\item $\cphiz-\cphiu$ is compact.
\item $\Gamma(\mathcal B\psi_0)=\Gamma(\mathcal B\psi_1)$ and, for all $z\in \Gamma(\mathcal B\psi_0)$, $\mathcal B\psi_0(z)=\mathcal B\psi_1(z)$, $\partial_\alpha \mathcal B\psi_0(z)=\partial_\alpha\mathcal B\psi_1(z)$ for $|\alpha|\leq 2$.
\end{enumerate}
\end{theorem}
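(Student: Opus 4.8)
The plan is to establish the two implications separately, noting that the analytic substance is already contained in Theorem \ref{thm:essentialnormcomposition} and in the previous corollary, so that the proof amounts to applying these two results together with a little bookkeeping.

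For the implication $(ii)\Rightarrow(i)$ I would simply invoke the previous corollary with $n=1$. Indeed, under $(ii)$ the two symbols share the common boundary set $\Gamma(\mathcal B\psi_0)=\Gamma(\mathcal B\psi_1)$, at every point $z$ of which $\varphi_0$ and $\varphi_1$ have Dirichlet contact of order $2=2n$ (this is the standing hypothesis of the theorem), while $(ii)$ also gives $\mathcal B\psi_0(z)=\mathcal B\psi_1(z)$ and $\partial_\alpha\mathcal B\psi_0(z)=\partial_\alpha\mathcal B\psi_1(z)$ for all $|\alpha|\leq 2=2n$. These are precisely the hypotheses of the ``Moreover'' part of the previous corollary, which then yields that $\cphiz-\cphiu$ is compact.

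The direction $(i)\Rightarrow(ii)$ is where Theorem \ref{thm:essentialnormcomposition} enters. I would apply it to the compact combination $C_{\varphi_0}-C_{\varphi_1}$, that is, to the two symbols $\varphi_0,\varphi_1$ with coefficients $+1$ and $-1$; their Bohr lifts are polynomials, hence of class $\mathcal C^2$ on $\overline{\DD^d}$ for $d$ large enough to accommodate the primes of both symbols, and $\cha(\varphi_0)=\cha(\varphi_1)=c_0\geq 1>0$, so the hypotheses are met. Fix $z\in\Gamma(\mathcal B\psi_0)$ and take $\varphi_0$ as reference symbol. The set $\mathcal J$ of symbols having the same second order boundary data as $\mathcal B\psi_0$ at $z$ contains $\varphi_0$, and the theorem forces $\sum_{j\in\mathcal J}\lambda_j=0$. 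Since the coefficient of $\varphi_0$ equals $+1\neq 0$, the sum can vanish only if $\varphi_1\in\mathcal J$ with its coefficient $-1$. Hence $\mathcal B\psi_1$ has the same second order boundary data as $\mathcal B\psi_0$ at $z$; in particular $\mathcal B\psi_0(z)=\mathcal B\psi_1(z)$, $\partial_\alpha\mathcal B\psi_0(z)=\partial_\alpha\mathcal B\psi_1(z)$ for $|\alpha|\leq 2$, and $\A\mathcal B\psi_1(z)=\A\mathcal B\psi_0(z)=0$, so $z\in\Gamma(\mathcal B\psi_1)$.

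This shows $\Gamma(\mathcal B\psi_0)\subseteq\Gamma(\mathcal B\psi_1)$ together with the required matching of the data; exchanging the roles of $\varphi_0$ and $\varphi_1$ (using $\varphi_1$ as reference) gives the reverse inclusion, hence $\Gamma(\mathcal B\psi_0)=\Gamma(\mathcal B\psi_1)$ and thus $(ii)$. The same reasoning covers the degenerate restricted-range case: if $\Gamma(\mathcal B\psi_0)=\varnothing$, then any $z\in\Gamma(\mathcal B\psi_1)$ would force $\varphi_0\in\mathcal J$ and hence $\A\mathcal B\psi_0(z)=0$, a contradiction, so $\Gamma(\mathcal B\psi_1)=\varnothing$ as well and $(ii)$ holds vacuously. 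I do not anticipate a genuine obstacle: the only points needing care are the correct identification of $\mathcal J$ in each of the two applications of Theorem \ref{thm:essentialnormcomposition}, and the observation that the contact-order-$2$ hypothesis is needed only for $(ii)\Rightarrow(i)$, the converse being a direct consequence of the essential-norm lower bound.
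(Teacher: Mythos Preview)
Your proposal is correct and follows exactly the approach the paper indicates: the paper's proof consists only of the sentence ``Combining Theorem \ref{thm:essentialnormcomposition} and the previous corollary, we finally get'', and you have simply spelled out how that combination works, applying Theorem \ref{thm:essentialnormcomposition} to the linear combination $C_{\varphi_0}-C_{\varphi_1}$ for $(i)\Rightarrow(ii)$ and the previous corollary with $n=1$ for $(ii)\Rightarrow(i)$. Your remark that the Dirichlet-contact hypothesis is used only in the direction $(ii)\Rightarrow(i)$ is accurate and a useful observation.
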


\subsection*{Acknowledgment.}
F. Bayart was supported by the grant ANR-17-CE40-0021 of the French National Research Agency ANR (project Front),
M. Wang and X. Yao were supported by National Science Foundation of China (Nos. 12171373, 11771340, 11701434).

\end{document}